\numberwithin{equation}{section} 
\numberwithin{figure}{section}
\newtheorem{theorem}[equation]{Theorem}
\newtheorem{lemma}[equation]{Lemma}
\newtheorem{proposition}[equation]{Proposition}
\newtheorem{corollary}[equation]{Corollary}
\theoremstyle{definition} 
\newtheorem{definition}[equation]{Definition}
\newtheorem{algorithm}[equation]{Algorithm}
\theoremstyle{remark}
\newtheorem{remark}[equation]{Remark}
\newtheorem{example}[equation]{Example}
\newenvironment{enumalph}
{\begin{enumerate}

}
{\end{enumerate}}
\newenvironment{enumroman}
{\begin{enumerate}

}
{\end{enumerate}}
\newcommand{\sbl}{\!{}_{\textup{\textsf{\tiny{L}}}}\,}
\newcommand{\sbr}{\!{}_{\textup{\textsf{\tiny{R}}}}\,}
\newcommand{\calOL}{\calO_{\textup{\textsf{\tiny{L}}}}}
\newcommand{\calOR}{\calO_{\textup{\textsf{\tiny{R}}}}}
\DeclareMathOperator{\M}{M}
\DeclareMathOperator{\Gen}{Gen}
\DeclareMathOperator{\SpGen}{SpGen}
\DeclareMathOperator{\disc}{disc}
\DeclareMathOperator{\discrd}{discrd}
\DeclareMathOperator{\id}{id}
\DeclareMathOperator{\rad}{rad}
\DeclareMathOperator{\SpCls}{SpCls}
\DeclareMathOperator{\Cls}{Cls}   
\DeclareMathOperator{\StCl}{StCl}   
\DeclareMathOperator{\Cl}{Cl}       
\DeclareMathOperator{\nrd}{nrd}       
\DeclareMathOperator{\Nm}{Nm}  
\DeclareMathOperator{\mass}{mass}
\DeclareMathOperator{\Idl}{Idl}
\DeclareMathOperator{\PIdl}{PIdl}
\DeclareMathOperator{\coker}{coker}
\DeclareMathOperator{\st}{st}
\DeclareMathOperator{\sgn}{sgn} 
\DeclareMathOperator{\rk}{rk} 
\newcommand{\val}{v} 
\newcommand{\card}[1]{\#{#1}}
\DeclarePairedDelimiter{\abs}{\lvert}{\rvert}
\newcommand{\stc}[1]{[{#1}]_{\textup{st}}}
\newcommand{\Q}{\QQ}
\newcommand{\calO}{O}
\newcommand\ZZ{\mathbb{Z}}
\newcommand\QQ{\mathbb{Q}}
\newcommand\Z{\mathbb{Z}}
\newcommand{\sL}{\mathsf L}
\newcommand{\fraka}{\mathfrak{a}}
\newcommand{\frakb}{\mathfrak{b}}
\newcommand{\frakc}{\mathfrak{c}}
\newcommand{\frakp}{\mathfrak{p}}
\newcommand{\simeqst}{\simeq_{\textup{st}}}
\newcommand{\fm}{\mathfrak{m}}
\newcommand{\fp}{\mathfrak{p}}
\newcommand{\fq}{\mathfrak{q}}
\newcommand{\fD}{\mathfrak{D}}
\newcommand{\fN}{\mathfrak{N}}
\renewcommand{\leq}{\leqslant}
\renewcommand{\le}{\leqslant}
\renewcommand{\geq}{\geqslant}
\renewcommand{\ge}{\geqslant}
\newcommand{\defi}[1]{\textsf{#1}} 	
\title{Definite orders with locally free cancellation}
\author{Daniel Smertnig}
\address{Department of Pure Mathematics, University of Waterloo, Waterloo, ON, Canada N2L 3G1}
\email{dsmertni@uwaterloo.ca}
\author{John Voight}
\address{Department of Mathematics,
  Dartmouth College, 6188 Kemeny Hall, Hanover, NH 03755, USA}
\email{jvoight@gmail.com}
\subjclass[2010]{Primary 11R52; Secondary 11E41, 11Y40, 16G30, 16H20}
\keywords{}
\begin{document}

\begin{abstract}
We enumerate all orders in definite quaternion algebras over number fields with the Hermite property; this includes all orders with the cancellation property for locally free modules.
\end{abstract}
 
\maketitle

\setcounter{tocdepth}{1}
\tableofcontents

\section{Introduction}

\subsection*{Motivation}

Let $R$ be the ring of integers of a number field $F$ with class group $\Cl R$, 
the group of isomorphism classes of locally principal $R$-modules under tensor product.
In fact, every finitely generated, locally free $R$-module $M$ is of the form $M \simeq \fraka_1 \oplus \cdots \oplus \fraka_n$ where each $\fraka_i$ is a locally principal $R$-module; moreover, we have $\fraka_1 \oplus \cdots \oplus \fraka_n \simeq \frakb_1 \oplus \cdots \oplus \frakb_m$ if and only if $m=n$ and $[\fraka_1 \cdots \fraka_n]=[\frakb_1 \cdots \frakb_m] \in \Cl R$.  In particular, every such $M$ is of the form $M \simeq R^{m} \oplus \fraka$ and the \defi{Steinitz class} $[\fraka] \in \Cl R$ is well-defined on the $R$-module isomorphism class of $M$.  Thus, if $\fraka,\frakb$ are locally principal $R$-modules and $m \geq 0$, then: 
\begin{enumroman}
\item $R^m \oplus \fraka \simeq R^m \oplus \frakb$ if and only if $\fraka \simeq \frakb$; and 
\item $R^m \oplus \fraka$ is a free $R$-module if and only if $\fraka$ is free.  
\end{enumroman}
Property (i) can be seen as a cancellation law.  And in this way, we \emph{rediscover} the group operation on $\Cl R$: given classes $[\fraka],[\frakb] \in \Cl R$, we have $\fraka \oplus \frakb \simeq R \oplus \frakc$ with $[\frakc]=[\fraka\frakb]=[\fraka][\frakb]$.

\subsection*{Generalizations}

We now pursue a noncommutative generalization: we seek to define a group operation on isomorphism classes of modules in a way analogous to the previous section.  Let $\calO$ be an $R$-order in a finite-dimensional semisimple $F$-algebra $B$.  The \defi{(right) class set} $\Cls \calO$ of $\calO$ is the set of locally principal right fractional $\calO$-ideals $I \subseteq B$ under the equivalence relation $I \sim J$ if there exists $\alpha \in B^\times$ such that $I=\alpha J$.  Equivalently, $\Cls \calO$ is the set of isomorphism classes of locally principal right $\calO$-modules.  Because $B$ may be noncommutative, the class set $\Cls \calO$ in general need not be a group---but there is a distinguished element $[\calO] \in \Cls \calO$, and by the geometry of numbers (the Jordan--Zassenhaus theorem; see, for example, Reiner \cite[\S 26]{Reiner75} for a proof), we have $\#\Cls \calO < \infty$, so at least $\Cls \calO$ is a pointed finite set.

In this generality, properties (i) and (ii) above may fail to hold!  Accordingly, we make the following definitions.  Let $M$,~$N$ be finitely generated, locally free right $\calO$-modules.  We say $M$,~$N$ are \defi{stably isomorphic}, written $M \simeqst N$, if there exists $m \in \Z_{\geq 0}$ such that $O^m \oplus M \simeq \calO^m \oplus N$; and we say $M$ is \defi{stably free} if $M \simeqst \calO^m$ for some $m \in \Z_{\geq 0}$.  

Let $\StCl \calO$ denote the set of stable isomorphism classes of locally principal right $\calO$-modules.  Taking the stable isomorphism class gives a natural surjective map of pointed sets, called the \defi{stable class map}
\begin{equation} \label{eqn:clsst}
\begin{aligned}
\st \colon \Cls \calO &\to \StCl \calO \\
[I] &\mapsto \stc{I}.
\end{aligned}
\end{equation}
Moreover, for $\stc{I},\stc{J} \in \StCl \calO$, there exists a unique $\stc{K} \in \StCl \calO$ such that $I \oplus J \simeq \calO \oplus K$ \cite[I, p.~115]{Froehlich75}; and under the operation $\stc{I}+\stc{J}=\stc{K}$, the set $\StCl \calO$ becomes a finite abelian group (related to a Grothendieck group, see Remark \ref{rmk:Grothendieckgroup}).

In the most desirable circumstance, the stable class map is a bijection---with this in mind, we make the following definitions.  We say $\calO$ has \defi{locally free cancellation} if $M \simeqst N$ implies $M \simeq N$ for all finitely generated, locally free right $\calO$-modules $M$,~$N$.
In this case, if $K$, $M$,~$N$ are finitely generated, locally free modules with $K \oplus M \simeq K \oplus N$, then $M \simeq N$.
We say $\calO$ is \defi{Hermite} if $M \simeqst \calO^m$ implies $M \simeq \calO^m$ for all such $M$ and $m \geq 0$.
If $\calO$ has locally free cancellation, then $\calO$ is Hermite; but importantly the converse does \emph{not} hold in general---a counterexample is described in detail by Smertnig \cite{Smertnig15}.

Every finitely generated, locally free right $\calO$-module $M$ is of the form $M \simeq \calO^m \oplus I$ where $I$ is a locally principal right $\calO$-module \cite[I]{Froehlich75}.  Consequently, $\calO$ has locally free cancellation if and only if the stable class map \eqref{eqn:clsst} is injective (and therefore bijective), and $\calO$ is Hermite if and only if $\st^{-1}(\stc{\calO}) = \{[\calO]\}$ (trivial kernel as pointed sets).  If $\calO$ has locally free cancellation, then by transport \eqref{eqn:clsst} defines a natural group structure on $\Cls \calO$.  

The reduced norm defines a surjective map of pointed sets 
\begin{equation} \label{eqn:nrdclsgO}
\begin{aligned} 
\nrd \colon \Cls \calO &\to \Cl_{G(\calO)} R \\
[I] &\mapsto [\nrd(I)]
\end{aligned}
\end{equation}
where $\Cl_{G(\calO)} R$ is a finite abelian group, a certain modified class group of $R$ associated to the idelic normalizer of $\calO$.  By a theorem of Fr\"ohlich \cite[II]{Froehlich75}, extending an earlier result of Swan \cite{Swan62} for maximal orders, there is an isomorphism of groups $\StCl \calO \simeq \Cl_{G(\calO)} R$. In this way, the rather abstract stable class map \eqref{eqn:clsst} can be reinterpreted concretely in terms of the reduced norm. For more, see Curtis--Reiner \cite[\S47]{CurtisReiner87} and Yu \cite{Yu17}.  

Finally, as a consequence of the theorem of strong approximation, if no simple factor of $B$ is a totally definite quaternion algebra, then the map $\nrd$ in \eqref{eqn:nrdclsgO} is bijective, and consequently $\calO$ has locally free cancellation.

\subsection*{Main result}

In light of the previous section, we are left with the case where $B$ is a totally definite quaternion algebra (and in particular $F$ is a totally real field), and we refer to an $R$-order $\calO \subseteq B$ as a \defi{definite (quaternion) order}.  
Our main result is as follows.

\begin{theorem} \label{thm:mainthm}
Up to ring isomorphism, there are exactly $303$ definite Hermite quaternion orders and exactly $247$ with locally free cancellation.
\end{theorem}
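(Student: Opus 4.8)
The plan is to reduce the statement to an explicit finite computation: reformulate the two properties via the reduced norm, bound all relevant invariants to cut down to a finite list of candidate orders, and then verify both properties for each candidate by machine. By the reduction recalled in the introduction, it suffices to treat a totally definite quaternion order $\calO \subseteq B$ over a totally real field $F$ of degree $n = [F:\QQ]$, with ring of integers $R$. Using Fr\"ohlich's isomorphism $\StCl\calO \simeq \Cl_{G(\calO)} R$ and the surjectivity of $\st$, the order $\calO$ has locally free cancellation precisely when $\#\Cls\calO = \#\Cl_{G(\calO)} R$, and $\calO$ is Hermite precisely when the fiber of $\nrd \colon \Cls\calO \to \Cl_{G(\calO)} R$ over the trivial class equals $\{[\calO]\}$. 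Both conditions are decidable from $\calO$: the class set $\Cls\calO$ and the reduced-norm map can be computed via Brandt matrices and the ideal-class algorithms for quaternion orders, while $\Cl_{G(\calO)} R$ is a (narrow) ray class group computation.

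Next I would establish finiteness with explicit bounds. If $\calO$ has cancellation, then $\mass(\calO) \le \#\Cls\calO = \#\Cl_{G(\calO)} R$; for the weaker Hermite property one obtains the same inequality $\mass(\calO) \le \#\Cl_{G(\calO)} R$ after noting, by the mass formula, that the principal spinor genus accounts for a $1/\#\Cl_{G(\calO)} R$ fraction of the total mass, so that its being equal to $\{[\calO]\}$ forces $\mass(\calO) \le \#\Cl_{G(\calO)} R$. On the other hand the Eichler mass formula gives $\mass(\calO) \ge c_n\, d_F^{3/2}\, \Phi(\calO)$ with $c_n \gg (4\pi^2)^{-n}$ and $\Phi(\calO) \ge 1$ a local factor growing with the index $[\OO:\calO]$ of $\calO$ in a maximal order $\OO$. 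Bounding $\#\Cl_{G(\calO)} R$ in terms of $d_F$, $n$, and that index, and invoking Odlyzko--Minkowski root-discriminant bounds together with the Siegel lower bound for $\lvert\zeta_F(-1)\rvert$ and Zimmert's regulator bound, forces $n$, then $d_F$, then $\discrd\calO$, and finally $[\OO:\calO]$ to be bounded by explicit constants. Since there are only finitely many $R_{\fp}$-orders of bounded discriminant for each prime $\fp$ (Brzezi\'nski's local classification, equivalently the dictionary with ternary quadratic forms), this leaves a finite, explicit list of candidate orders up to ring isomorphism; every Hermite order, hence in particular every cancellation order, lies on it.

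Finally I would carry out the verification: for each candidate $\calO$, compute $\Cls\calO$ and the reduced-norm map to $\Cl_{G(\calO)} R$, decide cancellation by testing $\#\Cls\calO = \#\Cl_{G(\calO)} R$ and the Hermite property by testing whether the fiber over the identity is trivial, and tally the results to obtain the counts $303$ and $247$.

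I expect the principal obstacle to be the finiteness step: the bounds must be valid for \emph{all} orders --- including non-Gorenstein and non-Eichler ones, where $\mass(\calO)$ and $\#\Cl_{G(\calO)} R$ must be controlled simultaneously as the order shrinks toward a maximal order --- and they must be small enough that the enumeration terminates in practice. A secondary difficulty is the scale and certification of the computation itself: computing class sets of every order below the discriminant bound, over each of the base fields that survive the analysis.
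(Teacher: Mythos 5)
Your overall strategy coincides with the paper's: characterize cancellation by bijectivity of $\nrd\colon \Cls\calO \to \Cl_{G(\calO)}R$ and the Hermite property by triviality of the fiber over $[R]$, bound the possibilities via the Eichler mass formula and Odlyzko-type discriminant bounds, and finish with a machine enumeration and verification. However, there is a genuine gap at the pivotal step. You assert that ``by the mass formula, the principal spinor genus accounts for a $1/\#\Cl_{G(\calO)}R$ fraction of the total mass.'' This equidistribution of mass over the fibers of the stable class map is \emph{not} a consequence of the Eichler mass formula, which only computes the total $\mass(\Cls\calO)$; it is precisely the paper's central technical result (the stably free mass formula, Theorem \ref{thm:mass} and Corollary \ref{cor:equamassHermite}), proved by a nontrivial comparison of locally isomorphic orders through a common suborder (Lemmas \ref{lem:mass-equiv}, \ref{lem:doesnotdepend}, \ref{lemma:mass-fiber}); the only prior route to it is Vign\'eras's Tamagawa-measure computation. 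Without this ingredient your inequality $\mass(\Cls\calO)\le \#\Cl_{G(\calO)}R$ for Hermite orders is unjustified, and the finiteness argument for the Hermite case collapses.

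A second missing idea is that the Hermite and cancellation properties ascend to superorders (Corollary \ref{cor:canc-overorder}, via fiberwise surjectivity of the extension map). The paper uses this twice in an essential way: to reduce the degree and discriminant bounds to \emph{maximal} orders, where $\Cl_{G(\calO)}R=\Cl^+R$ can be controlled by $2^{n-1}$, the Hilbert class field, and Armitage--Fr\"ohlich; and, combined with the stably free mass formula, to bound the index $[\widehat\calO'^\times:\widehat\calO^\times]$ of a Hermite suborder of a maximal order (Lemma \ref{lem:sffindexbound}), which is what makes the suborder enumeration terminate and allows pruning of the search tree. Your proposed substitute---bounding $n$, $d_F$, $\discrd\calO$ and the index simultaneously for arbitrary (including non-Gorenstein) orders using Siegel and Zimmert bounds while controlling $\#\Cl_{G(\calO)}R$ as the order shrinks---is exactly the difficulty you flag yourself, and it is not carried out; as stated it does not yield the explicit, implementable bounds on which the final computation (and hence the counts $303$ and $247$) depends.
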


The orders in Theorem \ref{thm:mainthm} are listed in appendix \ref{appendix:tables}, along with detailed information about them; a computer-loadable version is available \cite{Smertnig-Voight19:github}.  These orders arise in quaternion algebras over exactly $36$ number fields $F$ up to isomorphism.  If we fix the fields $F$ arising this way, we can refine our count by looking at orders up to $R$-algebra isomorphism, where $R$ is the ring of integers of $F$ (thereby distinguishing Galois conjugates): counted this way, there are exactly $375$ definite Hermite quaternion $R$-orders and exactly $316$ with locally free cancellation.

Theorem \ref{thm:mainthm} can be seen as the culminating resolution to a very general class number $1$ (or unique factorization) problem for central simple algebras over number fields.

\subsection*{Previous results}

Vign\'eras \cite{Vigneras76} showed that there are only finitely many isomorphism classes of definite, hereditary quaternion orders with locally free cancellation.  She provided a numerical criterion characterizing locally free cancellation, but in fact this was shown to characterize Hermite orders by Smertnig \cite{Smertnig15}.  By use of Odlyzko discriminant bounds, Vign\'eras found that Hermite orders are only possible over number fields of degree at most $33$, and then she classified them over quadratic and cyclic cubic fields.  More recently, Hallouin--Maire \cite{HallouinMaire06} classified definite hereditary Hermite orders (they refer to \emph{Eichler} orders, but like Vign\'eras only consider those Eichler orders of \emph{squarefree} reduced discriminant) 
by a rather involved analysis, finding that they arise only for fields of degree at most $6$.  Finally, Smertnig \cite{Smertnig15} completed the classification of definite hereditary orders with locally free cancellation.

The Hermite property has appeared in work by other authors in various guises.  Estes--Nipp
 \cite{EstesNipp89} and Estes \cite{Estes91a} considered factorization properties in quaternion orders, among them one they call \emph{factorization induced by local factorization \textup{(}FLF\textup{)}}.
After observing that the \emph{principal genus} of $\calO$ (as defined by Estes) consists precisely of the stably free right $\calO$-ideals, the theorem of Estes \cite[Theorem 1]{Estes91a} shows that $\calO$ has FLF if and only if $\calO$ is Hermite, a connection first noted by Nipp \cite{Nipp75}.
Estes--Nipp classify all 40 definite Hermite quaternion $\ZZ$-orders \cite[Table I]{EstesNipp89}.
Unlike the results of Vign\'eras and Hallouin--Maire, their classification is not restricted to hereditary orders---but it is only carried out over $R=\Z$.

Every quaternion order $\calO$ with $\# \Cls\calO = 1$ trivially has locally free cancellation, and all orders with $\# \Cls\calO \le 2$ were enumerated by Kirschmer--Lorch \cite{KirschmerLorch16}.  

The global function field case was studied by Denert--Van Geel \cite{DenertVanGeel86,DenertVanGeel88}.
In this case, because of the absence of archimedean places, there are central simple algebras of arbitrary dimension that do not satisfy strong approximation.  Denert--Van Geel show that if $\calO$ is a definite Hermite order over a global ring in a function field $F$, then $F=\mathbb F_q(t)$ must be a rational function field \cite[Theorem 2.1]{DenertVanGeel86}.
Moreover, they prove that---unlike the number field setting---there are infinitely many nonisomorphic quaternion algebras containing definite maximal orders with locally free cancellation \cite[Theorem 2.2]{DenertVanGeel86}.  (Some care must be taken in reading these papers, as they seem to incorrectly identify the Hermite property with locally free cancellation.)

\subsection*{Discussion}

Our proof of Theorem~\ref{thm:mainthm} follows the general approach of Vign\'eras and Hall\-ouin--Maire, but it differs in two important respects.  
\begin{enumerate}
\item Our paper is essentially self-contained, and in particular we do not use the criterion of Vign\'eras (derived from a computation involving Tamagawa measures).  Instead, we give a simple, direct argument (Theorem \ref{thm:mass},  Corollary \ref{cor:equamassHermite}) that the mass of the fibers in the stable class map is \emph{constant}; this allows us to reinterpret the theory in a clarifying way (Proposition \ref{prop:lfcanc}).  (For sanity, we show that our criterion implies the criterion of Vign\'eras, see Theorem \ref{thm:vigneras-criterion}.) Combined with the Eichler mass formula, this provides a stable mass formula for each of the fibers of the stable class map, from which we may proceed with analytic estimates.
\item As much as possible, we use machine computation in lieu of delicate, case-by-case analysis by hand.  This makes it easier for the reader to verify and to experiment with the result \cite{Smertnig-Voight19:github}. It also means that we can get away using slightly weaker, but easier to prove, bounds for the degree of $F$ (Proposition \ref{prop:df-bound}).  This approach is important for reproducibility and to avoid slips, given the complexity of the answer.  Our calculations are performed in the computer algebra system \textsc{Magma} \cite{Magma}; the total running time is less than an hour on a standard CPU.  We hope that our quaternionic algorithms will be of further use to others, beyond the classification in this paper: for example, we implement a systematic enumeration of suborders and superorders of quaternion orders, as well as the computation of the stable class group.
\end{enumerate}

We checked our output by restricting it and comparing to existing lists of orders (see Remark \ref{rmk:wechecked}), and in every case we checked they agree.

The list of definite quaternion orders is quite remarkable!  For some interesting examples, further discussion, and an application to factorization, see section \ref{sec:examappli}.

\subsection*{Organization}

Our paper is organized as follows.  In section \ref{sec:prelim}, we set up preliminaries on class groups and in section \ref{sec:stabcan} the properties of locally free cancellation and Hermite.  In section \ref{sec:characterization}, we characterize these properties in terms of masses.  In section \ref{sec:masses}, we establish a mass formula for the fibers of the stable class map.  Next, in section \ref{sec:bounds}, we present bounds for the search and our algorithm to find all orders with locally free cancellation.  We then discuss examples and applications in section \ref{sec:examappli}.  Finally, in Appendix \ref{appendix:comparvign}, we compare our stable mass formula with the criterion of Vign\'eras, and then in Appendix \ref{appendix:tables} we present the tables describing the output in detail.

\subsection*{Acknowledgements}

The authors would like to thank Emmanuel Hallouin, Christian Maire, Markus Kirsch\-mer, Roger Wiegand, and the anonymous referee.
Voight was supported by an NSF CAREER Award (DMS-1151047) and a Simons Collaboration Grant (550029).
Smertnig was supported by the Austrian Science Fund (FWF) project J4079-N32.
The research for this paper was conducted while Smertnig was visiting Dartmouth College; he would like to extend his thanks for their hospitality.

\section{Preliminaries} \label{sec:prelim}

As a general reference for quaternion algebras, we refer to the books of Vign\'eras \cite{Vigneras80} and Voight \cite{Voight18}.

\subsection*{Notation}

Throughout, let $F$ be a totally real number field of degree $n=[F:\Q]$ and ring of integers $R$.  Let $F_{>0}^\times$ be the set of totally positive elements of $F^\times$ (positive under all real embeddings of $F$).  
For a (nonzero) prime $\frakp$ of $R$, let $F_\fp$ denote the completion of $F$ at $\fp$ with valuation ring $R_\frakp \subseteq F_\frakp$.  We write ${\widehat{F}} \colonequals \prod'_{\fp} F_\fp$ for the \emph{finite} adeles of $F$, the restricted direct product with respect to $R_\frakp$ indexed over the primes of $R$; we similarly write $\widehat R \colonequals \prod_\frakp R_\frakp$.

Let $S$ be a finite (possibly empty) set of primes of $R$.
For a subset $X \subseteq \widehat F$, denote by 
\begin{equation}
X_{S,1} \colonequals \{ \alpha=(\alpha_\frakp)_\frakp \in X : \alpha_\fp=1 \text{ for all $\fp \in S$}\}.
\end{equation}
Let $\Idl R$ be the group of fractional $R$-ideals $\fraka \subseteq F$, and let $\Idl_S R \leq \Idl R$ be the subgroup of those $\fraka \in \Idl R$ for which $\fraka_\fp= R_\fp$ for all $\fp \in S$.  Let
\begin{equation}
F^{\times}_S \colonequals \{ a \in F^\times : \text{$a \in R_\fp^\times$ for all $\fp \in S$}\},
\end{equation} 
and let 
\begin{equation}
\PIdl_S R \colonequals \{ aR : a \in F^\times_S \} \leq \Idl_S R.
\end{equation}

Also throughout, let $B$ be a definite quaternion algebra over $F$, and let $\calO$ be an $R$-order in $B$.  We write $\calO_\fp \colonequals \calO \otimes_R R_\fp$ and $B_\fp \colonequals B \otimes_F F_{\fp}$, and define $\widehat B$ and $\widehat \calO$ similarly as above.  

\subsection*{Class groups}

The following class group will be of central importance for us.  From now on, let $S$ be a finite set of primes $\frakp$ of $R$ containing all those for which $\calO_\fp$ is not maximal.  We recall that if $\calO_\fp$ is maximal, then $\nrd(\calO_\fp^\times) = R_\fp^\times$ \cite[Lemma 13.4.6; {\citealp[Corollaire II.1.7, Th\'eor\`eme II.2.3(1)]{Vigneras80}}]{Voight18}.
Thus, all primes $\fp$ for which $\nrd(\calO_\fp^\times) \subsetneq R_\fp^\times$ are contained in $S$.

\begin{definition} \label{def:classGO}
Define
\begin{equation}
  \begin{aligned}
    F_{S,\calO}^\times &\colonequals \{ a \in F^\times_{>0} : a \in \nrd(\calO_\fp^\times) \text{ for all $\fp \in S$} \},\\
    \PIdl_{S,\calO} R &\colonequals \{ aR : a \in F_{S,\calO}^\times \}
  \end{aligned}
  \end{equation}
  and let
  \begin{equation}
    \Cl_{G(\calO)} R \colonequals \Idl_S R / \PIdl_{S,\calO} R.
  \end{equation}
\end{definition}

Just as there are canonical group isomorphisms
    \begin{equation} \label{l:cliso:cl}
      F^\times \backslash \widehat F^\times\!/ \widehat R^\times  \simeq \Cl R = \Idl R /  \PIdl R  \simeq  \Idl_S R /  \PIdl_S R,
\end{equation}
the `global' definition given in Definition \ref{def:classGO} can also be given equivalently adelically as in the following lemma.  We only use this lemma to identify the stable class group with $\Cl_{G(\calO)} R$ to compare with work of Fr\"ohlich \cite{Froehlich75} later on.

\begin{lemma} \label{l:cliso}
The following statements hold.
  \begin{enumalph}
    \item\label{l:cliso:eq}
    We have
    \[
      F^\times_{>0} \nrd(\widehat \calO^\times) \cap  \widehat F_{S,1}^\times = F^\times_{S,\calO} \widehat R^\times \cap \widehat F_{S,1}^\times.
    \]
  \item\label{l:cliso:clgo} There are canonical group isomorphisms
    \begin{equation*}
      \begin{split}
        \Cl_{G(\calO)} R &\simeq   (F_{>0}^\times \nrd(\widehat \calO^\times) \cap \widehat F_{S,1}^\times) \backslash  \widehat F_{S,1}^\times \\
        &=  (F_{>0}^\times \nrd(\widehat \calO^\times) \cap \widehat F_{S,1}^\times) \backslash \widehat F_{S,1}^\times / \widehat R_{S,1}^\times \simeq F^\times_{> 0} \backslash \widehat F^\times\!/\! \nrd(\widehat \calO^\times).
       \end{split}
     \end{equation*}
  \end{enumalph}
\end{lemma}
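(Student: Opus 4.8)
The plan is to prove part (b) first, deriving the canonical isomorphisms by general idelic manipulations, and then to extract part (a) as the precise statement needed to match the `global' description in Definition~\ref{def:classGO} with the adelic one. For part (b), I would start from the rightmost expression $F^\times_{>0} \backslash \widehat F^\times / \nrd(\widehat \calO^\times)$ and work back. Since $B$ is a totally definite quaternion algebra, the reduced norm $\nrd \colon \widehat B^\times \to \widehat F^\times$ is surjective onto $\widehat F^\times$ (it is surjective locally at every finite prime), and the group $\nrd(\widehat\calO^\times) \leq \widehat R^\times$ is an open subgroup containing $\widehat R_{S,1}^\times = \prod_{\fp \notin S} R_\fp^\times$, because $\nrd(\calO_\fp^\times) = R_\fp^\times$ for all $\fp \notin S$ by the cited lemma on maximal orders. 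I would then use the standard fact that multiplying by $\widehat R_{S,1}^\times$ (which is already absorbed into $\nrd(\widehat\calO^\times)$) changes nothing, and that, by weak/strong approximation for $F^\times$ at the finite set $S$, every class in $\widehat F^\times$ modulo $F^\times_{>0} \cdot (\text{open subgroup})$ has a representative in $\widehat F_{S,1}^\times$; this gives the equality with $(F_{>0}^\times \nrd(\widehat\calO^\times) \cap \widehat F_{S,1}^\times)\backslash \widehat F_{S,1}^\times$, and the middle line (inserting the superfluous $/\widehat R_{S,1}^\times$) is immediate since $\widehat R_{S,1}^\times \subseteq \nrd(\widehat\calO^\times)$.

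The remaining link in part (b) is the isomorphism with $\Cl_{G(\calO)} R = \Idl_S R / \PIdl_{S,\calO} R$. Here I would use the classical identification $\Idl_S R \simeq \widehat F_{S,1}^\times / \widehat R_{S,1}^\times$ (send an idele supported away from $S$ to the ideal it generates; this is the same dictionary underlying \eqref{l:cliso:cl}), and then identify the subgroup $\PIdl_{S,\calO} R$ with the image of $F^\times_{S,\calO}\widehat R^\times \cap \widehat F^\times_{S,1}$ under this map. The point $aR$ with $a \in F^\times_{S,\calO}$ corresponds to the idele which is $a$ at primes outside $S$ and $1$ at primes in $S$; this is exactly $a \cdot u$ for a suitable $u \in \widehat R^\times$ with $u_\fp = 1$ outside $S$. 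So $\PIdl_{S,\calO} R$ is the image of $F^\times_{S,\calO}\widehat R^\times \cap \widehat F^\times_{S,1}$, and by part (a) this equals the image of $F^\times_{>0}\nrd(\widehat\calO^\times)\cap \widehat F^\times_{S,1}$ --- which is precisely the subgroup appearing in the displayed quotient. Thus part (a) is exactly the identity that makes the two descriptions of the denominator agree, and proving (a) is the crux of the whole lemma.

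For part (a): the inclusion $\supseteq$ is the easy direction, since $F^\times_{S,\calO} \subseteq F^\times_{>0}$ and, for $a \in F^\times_{S,\calO}$, we have $a \in \nrd(\calO_\fp^\times)$ for $\fp \in S$ and $a \in R_\fp^\times = \nrd(\calO_\fp^\times)$ for $\fp \notin S$, so $a$ itself lies in $\nrd(\widehat\calO^\times)$ (diagonally), giving $F^\times_{S,\calO}\widehat R^\times \subseteq F^\times_{>0}\nrd(\widehat\calO^\times)$. For $\subseteq$, take $x \in F^\times_{>0}\nrd(\widehat\calO^\times) \cap \widehat F^\times_{S,1}$, say $x = a\cdot \nrd(\widehat\omega)$ with $a \in F^\times_{>0}$ and $\widehat\omega \in \widehat\calO^\times$. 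At $\fp \notin S$ the local component $\nrd(\widehat\omega)_\fp \in R_\fp^\times$, and since $x_\fp \in R_\fp^\times$ we get $a \in R_\fp^\times$ there; at $\fp \in S$ we have $x_\fp = 1$, so $a = \nrd(\widehat\omega)_\fp^{-1} \in \nrd(\calO_\fp^\times)$. Hence $a \in F^\times_{S,\calO}$, and writing $x = a \cdot \nrd(\widehat\omega)$ with $\nrd(\widehat\omega) \in \widehat R^\times$ exhibits $x \in F^\times_{S,\calO}\widehat R^\times$; since also $x \in \widehat F^\times_{S,1}$, this gives the claimed inclusion. I expect the main obstacle to be purely bookkeeping: keeping straight which of the several nested conditions (totally positive, local unit away from $S$, in the local norm group at $S$, component $1$ at $S$) is being used at each step, and making sure the approximation argument in part (b) is deployed at the right place so that passing to $\widehat F^\times_{S,1}$-representatives is legitimate. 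No deep input beyond surjectivity of $\nrd$ at finite primes, the maximal-order computation $\nrd(\calO_\fp^\times) = R_\fp^\times$, and the standard ideal--idele dictionary is required.
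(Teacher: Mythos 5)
Your part (b) and the $\subseteq$ direction of (a) follow essentially the same route as the paper: weak approximation to get $\widehat F^\times_{S,1}$-representatives of $F^\times_{>0}\backslash \widehat F^\times\!/\nrd(\widehat\calO^\times)$, identification of the kernel, the superfluity of $\widehat R^\times_{S,1}$ because $\nrd(\calO_\fp^\times)=R_\fp^\times$ for $\fp\notin S$, and the ideal--idele dictionary matching the image of $F^\times_{S,\calO}\widehat R^\times\cap\widehat F^\times_{S,1}$ with $\PIdl_{S,\calO}R$. However, your $\supseteq$ direction of (a) has a genuine gap. You assert that $a\in F^\times_{S,\calO}$ forces $a\in R_\fp^\times$ for $\fp\notin S$; this is not part of the definition---$F^\times_{S,\calO}$ imposes conditions only at the primes \emph{in} $S$ (together with total positivity), so $a$ may have nonzero valuation outside $S$ and its diagonal image need not lie in $\nrd(\widehat\calO^\times)\subseteq\widehat R^\times$. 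Moreover, even granting that, the intermediate containment you claim, $F^\times_{S,\calO}\widehat R^\times\subseteq F^\times_{>0}\nrd(\widehat\calO^\times)$ (with no intersection against $\widehat F^\times_{S,1}$), is false in general: take $a=1$ and a unit idele $u\in\widehat R^\times$ whose component at some $\fp\in S$ lies outside $\nrd(\calO_\fp^\times)$ (such primes exist exactly when $\calO_\fp$ fails $\nrd(\calO_\fp^\times)=R_\fp^\times$, e.g.\ for the $\ZZ$-order of reduced discriminant $32$). Over $F=\QQ$, writing $u=q\nu$ with $q\in F^\times_{>0}$ and $\nu\in\nrd(\widehat\calO^\times)$ would force $q$ to be a positive rational that is a unit at every prime, hence $q=1$ and $u=\nu$, a contradiction.

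The inclusion is only true after intersecting with $\widehat F^\times_{S,1}$, and the proof must use that condition---this is exactly what the paper does: write $x=a\beta$ with $a\in F^\times_{S,\calO}$, $\beta\in\widehat R^\times$, and $a\beta_\fp=1$ for all $\fp\in S$; then $\beta_\fp=a^{-1}\in\nrd(\calO_\fp^\times)$ for $\fp\in S$ (since $a\in\nrd(\calO_\fp^\times)$), while for $\fp\notin S$ one has $\beta_\fp\in R_\fp^\times=\nrd(\calO_\fp^\times)$ because $S$ contains all primes with $\nrd(\calO_\fp^\times)\subsetneq R_\fp^\times$; hence $\beta\in\nrd(\widehat\calO^\times)$ and $x\in F^\times_{>0}\nrd(\widehat\calO^\times)\cap\widehat F^\times_{S,1}$. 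The same misreading of the definition appears, harmlessly, in your $\subseteq$ direction: the remark that $x_\fp\in R_\fp^\times$ for $\fp\notin S$ is neither available (elements of $\widehat F^\times_{S,1}$ are unconstrained outside $S$) nor needed; what you actually need, and do prove correctly there, is $a\in\nrd(\calO_\fp^\times)$ for $\fp\in S$. With the $\supseteq$ argument repaired as above, the rest of your proposal goes through.
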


\begin{proof}
  First \ref*{l:cliso:eq}.  To show $(\subseteq)$, let $a\beta \in F^\times_{>0} \nrd(\widehat \calO^\times) \cap \widehat F_{S,1}^\times$ with $a \in F^\times_{>0}$ and $\beta \in \nrd(\widehat \calO^\times)$.
  Since $a \beta_\fp = 1$ for all $\fp \in S$, we conclude $a \in \nrd(O_\fp^\times)$.
  Thus $a \in F^\times_{S,\calO}$.  Conversely $(\supseteq)$, let $a \beta \in F^\times_{S,\calO} \widehat R^\times \cap \widehat F_{S,1}^\times$.
  Then again $a \beta_\fp =1$ for all $\fp \in S$.
  Since $a \in \nrd(O_\fp^\times)$, this implies $\beta_\fp \in \nrd(O_\fp^\times)$.
  Since $S$ contains all primes $\fp$ for which $\nrd(O_\fp^\times) \subsetneq R_\fp^\times$, we conclude $\beta \in \nrd(\widehat \calO^\times)$.
  
  Next, \ref*{l:cliso:clgo}.
  The middle equality is due to the obvious inclusion $\widehat R_{S,1}^\times \subseteq F^\times_{>0} \nrd(\widehat \calO^\times) \cap \widehat F_{S,1}^\times$.
  By weak approximation, the homomorphism
  \begin{equation}
    \widehat F_{S,1}^\times \to \widehat F_{S,1}^\times / \widehat R_{S,1}^\times \to F_{>0}^\times \backslash \widehat F^\times\!/ \!\nrd(\widehat \calO^\times)
  \end{equation}
  is surjective with kernel $F_{>0}^\times \nrd(\widehat \calO^\times) \cap \widehat F_{S,1}^\times$, giving the last isomorphism.  
  
  By \ref*{l:cliso:eq} we have $F_{>0}^\times \nrd(\widehat \calO^\times) \cap \widehat F_{S,1}^\times = F^\times_{S,\calO} \widehat R^\times \cap \widehat F_{S,1}^\times$.
  Since $\widehat F_{S,1}^\times / \widehat R_{S,1}^\times \simeq \Idl_S R$, we get an isomorphism
  $(F^\times_{S,\calO} \widehat R^\times \cap \widehat F_{S,1}^\times) \backslash \widehat F_{S,1}^\times / \widehat R_{S,1}^\times \simeq \Idl_S R / \PIdl_{S,\calO} R$.
\end{proof}

\subsection*{Class set and mass}

We now turn to the class set of our order.  A \defi{right fractional $\calO$-ideal} is an $R$-lattice $I \subseteq B$ (a finitely generated $R$-submodule with $IF=B$) such that $\calO \subseteq \calOR(I)$, where $\calOR(I) \colonequals \{\alpha \in B : I\alpha \subseteq I\}$ is the right order of $I$.

\begin{definition}
The \defi{\textup{(}right\textup{)} class set of $\calO$}, denoted $\Cls\calO = \Cls\sbr \calO$, is the set of isomorphism classes of locally principal right fractional $\calO$-ideals.
\end{definition}

Concretely, two right fractional $\calO$-ideals $I$,~$J$ are isomorphic if and only if there exists $\alpha \in B^\times$ such that $\alpha I = J$, and we write $[I] \in \Cls \calO$ for ideal classes.  We define the left class set $\Cls\sbl \calO$ similarly, noting that the standard involution furnishes a bijection between the right and left class sets.  We work primarily on the right, and accordingly suppress the subscript.  

By the geometry of numbers (Jordan--Zassenhaus theorem), we have $\#\Cls \calO<\infty$, so $\Cls \calO$ is a finite pointed set with distinguished element $[O]$---but in general, $\Cls \calO$ is not a group under multiplication.
From the idelic viewpoint, via completions there is a canonical bijection  $\Cls \calO \leftrightarrow B^\times \backslash \widehat B^\times / \widehat \calO^\times$.

The reduced norm induces a surjective map of finite (pointed) sets
\begin{equation} \label{eqn:nrdclso}
\begin{aligned}
  \nrd \colon \Cls\calO &\to \Cl_{G(\calO)} R \\
  [I] &\mapsto [\nrd(I)]
  \end{aligned}
  \end{equation}

\begin{definition} \label{def:stc-fiber}
  For $[\frakb] \in \Cl_{G(\calO)} R$, we define
  \[
    \Cls^{[\frakb]} O \colonequals \nrd^{-1}( \{ [\frakb] \} ) = \big\{ [I] \in \Cls\calO : [\nrd(I)] \in [\frakb] \,\big\}.
  \]
\end{definition}

For a (finite) subset $X=\{[I_i]\}_i \subseteq \Cls (\calO)$, we define the \defi{mass} of $X$ to be
\begin{equation}
  \mass(X) \colonequals \sum_{i=1}^r \frac{1}{[\calOL(I_i)^\times : R^\times]} \in \Q_{\geq 0}
\end{equation}
(well-defined independent of the choice of representatives).  

We will later (Theorem \ref{thm:massformula}) recall an explicit expression for $\mass(\Cls\calO)$, generalizing the Eichler mass formula.

Finally, we define the \defi{genus} of $\calO$ to be the set of $R$-orders in $B$ locally isomorphic to $\calO$.

\section{Locally free cancellation} \label{sec:stabcan}

In this section, we now relate the class set to naturally associated abelian groups. 

\subsection*{Stable isomorphism and cancellation}

Let $M$,~$N$ be right $\calO$-modules.  

\begin{definition}
We say that $M$,~$N$ are \defi{stably isomorphic}, and write $M \simeqst N$, if there exists $m \ge 0$ such that $M \oplus \calO^m \simeq N \oplus \calO^m$.  We say $M$ is \defi{stably free} if $M$ is stably isomorphic to a free module.  
\end{definition}

For more background on stably free modules in a much broader context of (noncommutative) rings, see Lam \cite[Chapter I.4]{Lam06} and McConnell--Robson \cite[Chapter 11]{McConnellRobson01}.  

\begin{remark}
A stably free right $\calO$-module that is not finitely generated is necessarily free \cite[Proposition I.4.2]{Lam06}, so we may restrict our attention to finitely generated modules without loss of generality.
\end{remark}

If $M \simeq N$, then of course $M \simeqst N$; we now give a name to the converse, restricted to locally free modules.

 \begin{definition} \label{def:cancellation}
We say $\calO$ has \defi{locally free cancellation} if for all 
   locally free, finitely generated $\calO$-modules $M$,~$N$, we have
   \begin{equation} M \simeqst N \quad \Rightarrow \quad M \simeq N, \end{equation}
   that is, whenever $M$,~$N$ are stably isomorphic, then they are in fact isomorphic.

We say $\calO$ is a \defi{\textup{(}right\textup{)} Hermite ring} if for all locally free, finitely generated $\calO$-modules $M$ we have $M \simeqst \calO^m$ implies $M \simeq \calO^m$, that is, every stably free, locally free right $\calO$-module is in fact free.  
 \end{definition}
 
A straightforward dualization argument shows that a ring is left Hermite if and only if it is right Hermite.  And  if $\calO$ has locally free cancellation, then clearly $\calO$ is Hermite.

\begin{remark}
Any stably free module over a semilocal ring is free (see Lam \cite[Examples I.4.7]{Lam06} or McConnell--Robson \cite[Theorem 11.3.7]{McConnellRobson01}); hence, stably free right $\calO$-modules are locally free.  Therefore, $\calO$ is Hermite if and only if every stably free right $\calO$-module is free.
\end{remark}

As the following lemma shows, locally free cancellation actually implies the apparently stronger property that arbitrary locally free, finitely generated modules may be cancelled from direct sums, thus justifying the name.

\begin{lemma} \label{l:fullcancellation}
  An order $\calO$ has locally free cancellation if and only if for all locally free, finitely generated $\calO$-module $M$, $N$,~$K$, we have
  \[
    K \oplus M \simeq K \oplus N \quad\Rightarrow\quad M \simeq N.
  \]
  That is, the commutative monoid of isomorphism classes of locally free, finitely generated $\calO$-modules under direct sum is cancellative.
\end{lemma}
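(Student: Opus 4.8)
The plan is to prove the nontrivial direction: assuming $\calO$ has locally free cancellation in the sense of Definition \ref{def:cancellation}, deduce the apparently stronger cancellation statement $K \oplus M \simeq K \oplus N \Rightarrow M \simeq N$ for arbitrary locally free, finitely generated modules. (The reverse implication is immediate on taking $K = \calO^m$.) The key structural input, already recalled in the introduction, is that every locally free, finitely generated right $\calO$-module decomposes as $\calO^m \oplus I$ for a locally principal right $\calO$-ideal $I$, and more precisely that over each completion $\calO_\fp$ such a module is free; this is the Fr\"ohlich/Steinitz-type description cited to \cite[I]{Froehlich75}. So I would first reduce $K$ to a stably free complement: write $K \oplus J \simeq \calO^m$ for some locally principal $J$ and some $m$ (again by \cite[I]{Froehlich75}, every locally free module has a locally free complement summing to a free module, since one can split off a projective ideal and then clear it).

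First I would take the isomorphism $K \oplus M \simeq K \oplus N$ and add $J$ to both sides, obtaining
\[
  (K \oplus J) \oplus M \simeq (K \oplus J) \oplus N,
\]
which by the choice of $J$ gives $\calO^m \oplus M \simeq \calO^m \oplus N$, that is, $M \simeqst N$. Then the hypothesis that $\calO$ has locally free cancellation (in the a priori weaker sense of Definition \ref{def:cancellation}, applied to the locally free finitely generated modules $M$ and $N$) yields directly $M \simeq N$. That is the whole argument once the complement $J$ is in hand.

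The main obstacle, and the only point requiring care, is the existence of the locally free complement $J$ with $K \oplus J$ free. I would justify this as follows: decompose $K \simeq \calO^{m-1} \oplus I$ with $I$ a locally principal right $\calO$-ideal; it then suffices to find a locally principal right $\calO$-ideal $I'$ with $I \oplus I' \simeq \calO \oplus \calO$ (or more generally $\simeq \calO^2$), for then $J \colonequals \calO^{m-1} \oplus I'$ works with $K \oplus J \simeq \calO^{2m-1}$. The existence of such $I'$ is exactly the statement that $[I]$ has an inverse in the group $\StCl \calO$ under the operation $\stc{I} + \stc{J} = \stc{K}$ recalled from \cite[I, p.~115]{Froehlich75}: choosing $I'$ in the stable class $-\stc{I}$ gives $I \oplus I' \simeq \calO \oplus K'$ with $\stc{K'} = 0$, i.e. $K'$ stably free; enlarging by a further free summand and invoking that $\calO$ is Hermite (which follows from locally free cancellation) we may replace $K'$ by $\calO$, so after absorbing the extra copies of $\calO$ into $I'$ we get $I \oplus I' \simeq \calO^{2}$ with $I'$ still locally principal. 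Assembling these pieces gives the desired $J$, completing the proof. Throughout, the only nontrivial facts used — the $\calO^m \oplus I$ decomposition, the group structure on $\StCl \calO$, and the Hermite consequence of locally free cancellation — are all recorded earlier in the paper.
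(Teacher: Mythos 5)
Your argument is correct: the trivial direction is handled by taking $K$ free, and for the substantive direction you add a complement $J$ with $K \oplus J$ free to both sides of $K \oplus M \simeq K \oplus N$, deduce $M \simeqst N$, and invoke the definition of locally free cancellation. This is the same skeleton as the paper's proof; the only real content of either argument is the existence of the complement, and there the two routes differ. The paper gets it in one step: a locally free, finitely generated module over $\calO$ is projective (because $\operatorname{Ext}^1$ commutes with localization; Curtis--Reiner, Proposition 8.19), hence a direct summand of a free module, so $K \oplus K' \simeq \calO^n$ exists with no further work. You instead build the complement by hand: write $K \simeq \calO^{m-1} \oplus I$, pick $I'$ in the inverse stable class so that $I \oplus I' \simeq \calO \oplus K'$ with $\stc{K'}=0$, and use the Hermite property (a consequence of the cancellation hypothesis) to conclude $K' \simeq \calO$. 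This works and involves no circularity, since the Fr\"ohlich decomposition, the group structure on $\StCl \calO$, and the implication ``cancellation $\Rightarrow$ Hermite'' are all established independently of this lemma; but it is heavier than necessary, and two small points could be tightened. First, the detour through ``enlarging by a further free summand'' is superfluous: $K'$ is locally principal and stably free, i.e.\ $K' \simeqst \calO$, so the cancellation hypothesis itself (no need even to name the Hermite property) gives $K' \simeq \calO$ directly, yielding $I \oplus I' \simeq \calO^2$ on the nose; if you do enlarge, then after absorbing the extra copies of $\calO$ the complement is no longer locally principal, though that does not matter for the conclusion. Second, the case $K=0$ should be noted as trivial before writing $K \simeq \calO^{m-1} \oplus I$. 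In exchange for the extra machinery, your construction buys nothing here beyond what projectivity gives for free, so the paper's route is the more economical one; but as a proof your proposal is sound.
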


\begin{proof}
The implication $(\Leftarrow)$ is clear.  For $(\Rightarrow)$, suppose that $\calO$ has locally free cancellation and $M$, $N$, $K$ are such that $K \oplus M \simeq K \oplus N$.
  Since $K$ is locally free, the fact that $\operatorname{Ext}^{1}$ commutes with localization in our setting shows that $K$ is projective; see, for example, Curtis--Reiner \cite[Proposition 8.19]{CurtisReiner81}.
  Let $K'$ be such that $K \oplus K' \simeq O^n$ for some $n \ge 0$.
  Then $\calO^n \oplus M \simeq \calO^n \oplus N$, and then $M \simeq N$ by locally free cancellation.
\end{proof}

Stable isomorphism defines an equivalence relation on the locally principal right fractional $\calO$-ideals.
We denote by $\stc{I}$ the stable isomorphism class of $I$, and by $\StCl{O}$ the set of all stable isomorphism classes of locally principal (that is, locally free rank $1$) right fractional $\calO$-ideals.  Taking stable isomorphism classes gives a natural surjective map
\begin{equation} \label{eqn:clsst0}
\begin{aligned}
\st \colon \Cls \calO &\to \StCl \calO \\
[I] &\mapsto \stc{I}
\end{aligned}
\end{equation}
of pointed sets, with \defi{kernel} $\Cls^{[R]} \calO$.

For a locally free, finitely generated right $\calO$-module $M$, by definition for all $\fp$ there exists an $m$ such that $M_\fp \cong \calO_\fp^m$.
The number $m$ is independent of $\fp$ and is called the \defi{rank} of $M$.

\begin{lemma} \label{lem:IJstab}
The following statements hold.
\begin{enumalph}
\item Let $M$ be a \textup{(}nonzero\textup{)} locally free, finitely generated right $\calO$-module $M$.  Then there exists a locally principal right $O$-ideal $I$ such that $M \simeq \calO^m \oplus I$ \textup{(}with $m=\rk M-1$\textup{)}, and $\stc{I}$ is uniquely determined by $\stc{M}$.  
\item Let $I$,~$J$ be locally free right fractional $\calO$-ideals.  Then there exists a locally principal right $\calO$-ideal $K$ such that $I \oplus J \simeq O \oplus K$, and $\stc{K}$ is uniquely determined by $\stc{I}$ and $\stc{J}$.
\end{enumalph}
\end{lemma}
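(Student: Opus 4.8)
The plan is to deduce both parts from the decomposition of locally free $\calO$-modules already recalled in the introduction (due to Fr\"ohlich \cite[I]{Froehlich75}): once that is granted, every assertion in the lemma is a formal manipulation of the definition of $\simeqst$, together with the elementary remark that an abstract locally free rank-one right $\calO$-module may be realized as a locally principal right fractional $\calO$-ideal inside $B$.

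\emph{Part \textup{(a)}.} By \cite[I]{Froehlich75} there is a locally free rank-one right $\calO$-module $L$ with $M \simeq \calO^m \oplus L$, and comparing ranks (additive over $\oplus$) forces $m = \rk M - 1$. To replace $L$ by a fractional ideal, I would use that $B$ is a division algebra (being a definite quaternion algebra over the totally real field $F$): then $L \otimes_R F$ is a one-dimensional right $B$-vector space, hence isomorphic to $B$, and the composite of $L \hookrightarrow L \otimes_R F$ with such an isomorphism identifies $L$ with a full $R$-lattice $I \subseteq B$ stable under right multiplication by $\calO$, i.e.\ a locally principal right fractional $\calO$-ideal, so $M \simeq \calO^m \oplus I$. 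For the uniqueness of $\stc{I}$: rank is a stable invariant, since $M \simeqst M'$ gives $M \oplus \calO^k \simeq M' \oplus \calO^k$ for some $k$ and additivity of $\rk$ then yields $\rk M = \rk M'$; hence in any two decompositions $M \simeq \calO^m \oplus I$ and $M' \simeq \calO^{m'} \oplus I'$ with $M \simeqst M'$ we must have $m = m'$, and substituting into $M \oplus \calO^k \simeq M' \oplus \calO^k$ gives $\calO^{m+k} \oplus I \simeq \calO^{m+k} \oplus I'$, which is exactly $I \simeqst I'$, i.e.\ $\stc{I} = \stc{I'}$. Taking $M = M'$ shows in particular that $\stc{I}$ does not depend on the chosen decomposition.

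\emph{Part \textup{(b)}.} Set $N \colonequals I \oplus J$. Localizing, $N_\fp \simeq \calO_\fp \oplus \calO_\fp$ because $I$ and $J$ are locally free of rank one, so $N$ is locally free of rank $2$; applying part (a) produces a locally principal right fractional $\calO$-ideal $K$ with $I \oplus J = N \simeq \calO \oplus K$, and $\stc{K}$ is determined by $\stc{N}$. It remains to note that $\stc{N}$ depends only on $\stc{I}$ and $\stc{J}$: if $I \simeqst I'$ and $J \simeqst J'$, say $I \oplus \calO^k \simeq I' \oplus \calO^k$ and $J \oplus \calO^{\ell} \simeq J' \oplus \calO^{\ell}$, then the direct sum of these isomorphisms gives $I \oplus J \oplus \calO^{k+\ell} \simeq I' \oplus J' \oplus \calO^{k+\ell}$, so $I \oplus J \simeqst I' \oplus J'$. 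Combining, $\stc{K}$ is determined by $\stc{I}$ and $\stc{J}$.

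I do not expect a genuine obstacle here: the only substantive input is the cited decomposition theorem, and everything else is bookkeeping with the auxiliary free summands $\calO^k$ in the definition of $\simeqst$. If anything deserves the label ``the hard part'', it is that decomposition theorem itself, which I would take as given rather than reprove.
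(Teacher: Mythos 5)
Your proposal is correct and matches the paper's approach: the paper likewise disposes of part (a) by citing Fr\"ohlich's decomposition theorem and notes that well-definedness of the stable class and part (b) are formal consequences of the definition of stable isomorphism. You simply fill in the bookkeeping (rank as a stable invariant, realizing a locally free rank-one module as a locally principal fractional ideal via $L \otimes_R F \simeq B$) that the paper leaves implicit, and these details are sound.
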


\begin{proof}
See Fr\"ohlich \cite[I, p.~115]{Froehlich75} for part (a); part (b) follows from (a), and it is precisely the notion of stable isomorphism that implies that the class is well-defined.
\end{proof}

By Lemma \ref{lem:IJstab}(b), defining $\stc{I} + \stc{J} = \stc{K}$, the set $\StCl{O}$ has the structure of a finite abelian group, the \defi{stable class group} of $\calO$.  
See Reiner \cite[\S35]{Reiner75} or Curtis--Reiner \cite[\S49]{CurtisReiner87} for further detail.

\begin{remark} \label{rmk:Grothendieckgroup}
Let $K_0(\calO)$ be the Grothendieck group of locally free right $\calO$-modules of finite rank. Then there is an exact sequence of groups
\begin{equation}
\begin{aligned}
0 \to \StCl(\calO) &\to K_0(\calO) \xrightarrow{\rk} \ZZ \to 0 \\ 
\stc{I} &\mapsto (I)-(\calO) 
\end{aligned}
\end{equation}

Let $\mathcal V(\calO)$ be the commutative monoid consisting of isomorphism classes $[M]$ of locally free, finitely generated $\calO$-modules together with the operation $[M]+[N] = [M \oplus N]$.
By Lemma~\ref{l:fullcancellation}, if $\calO$ has locally free cancellation, then $\mathcal V(\calO)$ is \emph{cancellative}, and thus embeds into $K_0(\calO)$, with $K_0(\calO)$ in fact being the quotient group of $\mathcal V(\calO)$.
\end{remark}

\subsection*{Reduced norms and equivalences}

Define
\[
  \widehat B^1 \colonequals \{ \widehat \alpha \in \widehat B : \nrd(a)=1 \}.
\] 
We recall the following local description of the stable class group.

\begin{theorem}[{Fr\"ohlich \cite[II, p. 115]{Froehlich75}}] \label{thm:frohlichstcl}
There exists a group isomorphism
\begin{equation}
\begin{aligned}
  \StCl(O) &\simeq B^\times \backslash \widehat B^\times\!/ \widehat B^1 \widehat \calO^\times \\
\stc{I} &\mapsto B^\times \widehat{\alpha} \widehat B^1 \widehat O^\times, 
\end{aligned}
\end{equation} 
where $\widehat{\alpha}=(\alpha_\frakp)_{\frakp}$ if $I_\frakp = \alpha_\frakp O_\frakp$.  
\end{theorem}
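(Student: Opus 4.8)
The plan is to identify both sides with the class group $\Cl_{G(\calO)}R$ of Definition~\ref{def:classGO} and then to invoke Lemma~\ref{l:cliso} for the adelic reformulation. I would begin from the surjection $\nrd\colon\Cls\calO\to\Cl_{G(\calO)}R$ of \eqref{eqn:nrdclso} and show that it factors through the stable class map $\st$. Indeed, if $I\oplus\calO^m\simeq J\oplus\calO^m$ as right $\calO$-modules, then passing to the idelic description of rank-$(m{+}1)$ locally free modules and applying the reduced norm of $M_{m+1}(B)$ to the isomorphism, together with $\nrd(\GL_{m+1}(B))=F^\times_{>0}$ (Hasse--Schilling, since $B$ is totally definite) and $\nrd(\GL_{m+1}(\widehat\calO))=\nrd(\widehat\calO^\times)$, forces $[\nrd(I)]=[\nrd(J)]$ in $\Cl_{G(\calO)}R$; equivalently, the reduced-norm class is a stable-isomorphism invariant and is additive on direct sums with $\nrd(\calO)=[R]$ (cf.\ Curtis--Reiner \cite[\S49]{CurtisReiner87}, Reiner \cite[\S35]{Reiner75}). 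Thus $\nrd$ descends to a surjection $\bar\rho\colon\StCl\calO\to\Cl_{G(\calO)}R$, and applying $\nrd$ to $I\oplus J\simeq\calO\oplus K$ from Lemma~\ref{lem:IJstab}(b) and using multiplicativity shows $\bar\rho(\stc{I})+\bar\rho(\stc{J})=\bar\rho(\stc{I}+\stc{J})$, so $\bar\rho$ is a homomorphism.

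The heart of the argument is the injectivity of $\bar\rho$: if $[\nrd(I)]=[\nrd(J)]$ in $\Cl_{G(\calO)}R$, then $I\simeqst J$. It suffices to construct an isomorphism $\calO\oplus I\simeq\calO\oplus J$ of rank-two locally free modules; these have reduced-norm classes $[\nrd(I)]$ and $[\nrd(J)]$, so what is needed is cancellation in rank~$\geq 2$. For this I would invoke Eichler's theorem: the algebra $M_2(B)$ satisfies the Eichler condition --- at each real place $v$ the norm-one group of $M_2(B)_v=M_2(\mathbb{H})$ is $\SL_2(\mathbb{H})$, which is noncompact, in contrast to the compact group $\SL_1(\mathbb{H})$ attached to $B_v$ --- so strong approximation applies to the simply connected group $\SL_1(M_2(B))$, yielding that two locally free right $\calO$-modules of the same rank $\geq 2$ are isomorphic as soon as their reduced-norm classes in $\Cl_{G(\calO)}R$ coincide (Swan \cite{Swan62}, Fr\"ohlich \cite{Froehlich75}, or the strong-approximation treatment in Voight \cite[Ch.~28]{Voight18}). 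This is the single substantial input; everything else is formal. Hence $\bar\rho$ is an isomorphism of finite abelian groups. I expect this step --- importing Eichler's theorem / strong approximation for $M_2(B)$ --- to be the main obstacle to a fully self-contained account.

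It remains to pass to the adelic form and to pin down the stated formula. The reduced norm gives a surjection $\widehat B^\times\to\widehat F^\times$ with kernel $\widehat B^1$; it carries $B^\times$ onto $F^\times_{>0}$ (Hasse--Schilling) and $\widehat B^1\widehat\calO^\times$ onto $\nrd(\widehat\calO^\times)$. Since $\widehat B^\times/\widehat B^1\cong\widehat F^\times$ is abelian, the subgroup $\widehat B^1\widehat\calO^\times$ is normal in $\widehat B^\times$, so $B^\times\backslash\widehat B^\times/\widehat B^1\widehat\calO^\times$ is a group, and $\nrd$ induces an isomorphism of it onto $F^\times_{>0}\backslash\widehat F^\times/\nrd(\widehat\calO^\times)$, which Lemma~\ref{l:cliso}(b) identifies with $\Cl_{G(\calO)}R$. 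Composing $\bar\rho$ with the inverse of this chain produces the required group isomorphism $\StCl\calO\simeq B^\times\backslash\widehat B^\times/\widehat B^1\widehat\calO^\times$. To check it sends $\stc{I}$ to $B^\times\widehat\alpha\widehat B^1\widehat\calO^\times$ when $I_\fp=\alpha_\fp\calO_\fp$, I would trace the definitions: $\bar\rho(\stc{I})=[\nrd(I)]$; under Lemma~\ref{l:cliso}(b) this corresponds to the idele class of $\nrd(\widehat\alpha)$, since $\nrd(I)_\fp=\nrd(\alpha_\fp)R_\fp$; and pulling back along $\nrd$ returns $B^\times\widehat\alpha\widehat B^1\widehat\calO^\times$, as claimed. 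Granting Eichler's theorem, the remainder is a diagram chase anchored on Lemmas~\ref{lem:IJstab} and~\ref{l:cliso}.
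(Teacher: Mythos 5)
The paper does not actually prove this statement: it is imported verbatim from Fr\"ohlich \cite[II, p.~115]{Froehlich75} and used as a black box, its only role being to feed the diagram \eqref{eqn:clsosimstcl} and Corollary \ref{c:stcl}. So your argument is necessarily a different route, and it is in substance the standard one behind Fr\"ohlich's result: pass to rank $\geq 2$, where $M_2(B)$ satisfies the Eichler condition, let strong approximation (Eichler's theorem plus Jacobinski-type cancellation) classify rank-$2$ locally free modules by their reduced norm class, and use Hasse--Schilling ($\nrd(B^\times)=F^\times_{>0}$ for totally definite $B$) together with Lemma \ref{l:cliso}\ref{l:cliso:clgo} to translate $B^\times \backslash \widehat B^\times\!/\widehat B^1\widehat\calO^\times$ into $F^\times_{>0}\backslash\widehat F^\times\!/\nrd(\widehat\calO^\times)\simeq \Cl_{G(\calO)}R$. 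Note that your route in effect establishes Corollary \ref{c:stcl} first and then recovers the theorem; this reversal is harmless because Lemma \ref{l:cliso} does not depend on the theorem. The skeleton (well-definedness via reduced norms of $\GL_{m+1}$, additivity via Lemma \ref{lem:IJstab}(b), injectivity via rank-$2$ classification, then the idelic translation) is sound.

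Two assertions you lean on deserve explicit justification, since they carry real weight. First, $\nrd(\GL_{m+1}(\widehat\calO))=\nrd(\widehat\calO^\times)$: this holds because each $\calO_\fp$ is semilocal, so $\GL_{m+1}(\calO_\fp)$ is generated by $\GL_1(\calO_\fp)$ and elementary matrices, and elementary matrices have reduced norm $1$; this fact is used both in your well-definedness step and, implicitly, to ensure that the class group appearing in Eichler's theorem for $M_2(\calO)$ is literally $\Cl_{G(\calO)}R$ and not a larger quotient. Second, for the key input ``rank $\geq 2$ locally free $\calO$-modules with equal reduced-norm class are isomorphic'' you should cite Eichler's theorem and the cancellation theorem for orders satisfying the Eichler condition (e.g.\ Reiner \cite[\S 35]{Reiner75} or Curtis--Reiner \cite[\S 49]{CurtisReiner87}) rather than Fr\"ohlich \cite{Froehlich75}, which is the very result being proved---as written your citation list skirts circularity---and note that Voight \cite[Ch.~28]{Voight18} treats strong approximation for quaternion orders, so it supplies the method but not verbatim the statement for the degree-$4$ algebra $M_2(B)$. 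With these two points made precise, your proposal is a correct reconstruction of the result the paper only cites.
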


In particular, Theorem \ref{thm:frohlichstcl} yields a commutative diagram
\begin{equation}  \label{eqn:clsosimstcl}
  \begin{tikzcd}
    \Cls O \ar[r]{st} \ar[d,"\rotatebox{90}{\(\sim\)}"] & \StCl(O) \ar[d,"\rotatebox{90}{\(\sim\)}"] \\
    B^\times \backslash \widehat B^\times \!/ \widehat O^\times \ar[r] & B^\times \backslash \widehat B^\times\!/\widehat B^1 \widehat \calO^\times
  \end{tikzcd}
\end{equation}
where the top map is the stable class map \eqref{eqn:clsst0} and the bottom map is the natural projection.  

\begin{corollary} \label{c:stcl}
\begin{equation}
  \StCl(O) \simeq F^\times \backslash \widehat F^\times\!/\!\nrd(\widehat \calO^\times)  \simeq \Cl_{G(\calO)} R.
\end{equation}
\end{corollary}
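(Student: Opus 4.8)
The plan is to push Fr\"ohlich's adelic description of the stable class group (Theorem~\ref{thm:frohlichstcl}) forward along the reduced norm, and then to invoke Lemma~\ref{l:cliso}\ref{l:cliso:clgo} to identify the resulting double coset space with $\Cl_{G(\calO)} R$. Concretely, Theorem~\ref{thm:frohlichstcl} gives $\StCl(\calO) \simeq B^\times\backslash\widehat B^\times/\widehat B^1\widehat\calO^\times$, so it suffices to produce an isomorphism $B^\times\backslash\widehat B^\times/\widehat B^1\widehat\calO^\times \simeq F^\times_{>0}\backslash\widehat F^\times/\nrd(\widehat\calO^\times)$ and then feed the latter into Lemma~\ref{l:cliso}\ref{l:cliso:clgo}.

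First I would record the properties of $\nrd\colon\widehat B^\times\to\widehat F^\times$ that are needed. It is surjective: at a split finite place it is the determinant, at a ramified finite place the reduced norm of the local division quaternion algebra is already surjective onto $F_\fp^\times$, and since $\nrd(\calO_\fp^\times)=R_\fp^\times$ whenever $\calO_\fp$ is maximal---hence for all but finitely many $\fp$---this respects the restricted products defining $\widehat B^\times$ and $\widehat F^\times$. Moreover $\widehat B^1=\ker\nrd$ by definition, so $\nrd$ induces an isomorphism $\widehat B^\times/\widehat B^1\xrightarrow{\ \sim\ }\widehat F^\times$; in particular $\widehat B^\times/\widehat B^1$ is abelian, so every subgroup of $\widehat B^\times$ containing $\widehat B^1$ is normal, and in particular $B^\times\widehat B^1\widehat\calO^\times$ is a normal subgroup of $\widehat B^\times$. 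The reduced norm therefore descends to an isomorphism
\[
  B^\times\backslash\widehat B^\times/\widehat B^1\widehat\calO^\times \;=\; \widehat B^\times\big/\bigl(B^\times\widehat B^1\widehat\calO^\times\bigr) \;\xrightarrow{\ \sim\ }\; \widehat F^\times\big/\bigl(\nrd(B^\times)\,\nrd(\widehat\calO^\times)\bigr).
\]
At this point I would invoke the Hasse--Schilling--Maass norm theorem: since $B$ is totally definite it is ramified at every real place of $F$, hence $\nrd(B^\times)=F^\times_{>0}$. Combining this with Theorem~\ref{thm:frohlichstcl} yields $\StCl(\calO)\simeq F^\times_{>0}\backslash\widehat F^\times/\nrd(\widehat\calO^\times)$, and Lemma~\ref{l:cliso}\ref{l:cliso:clgo} then identifies the right-hand side with $\Cl_{G(\calO)}R$, completing the chain of isomorphisms. (Note that, consistently with Definition~\ref{def:classGO} and with the last isomorphism of Lemma~\ref{l:cliso}\ref{l:cliso:clgo}, it is the totally positive subgroup $F^\times_{>0}$ that appears, forced by Hasse--Schilling--Maass.)

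I do not anticipate a genuine obstacle here: every ingredient---Fr\"ohlich's theorem, local surjectivity of the reduced norm, Hasse--Schilling--Maass, and Lemma~\ref{l:cliso}---is either cited or already in hand. The only point requiring a little care is the group-theoretic bookkeeping in the displayed isomorphism: one must check that $\widehat B^1\widehat\calO^\times$ and $B^\times\widehat B^1\widehat\calO^\times$ are normal in $\widehat B^\times$ and that $\nrd$ induces a well-defined bijection on the double cosets, and this is exactly where one uses both that $\widehat B^1=\ker\nrd$ and that $\nrd\colon\widehat B^\times\to\widehat F^\times$ is surjective. Finally, I would note that under the identification of $\Cls\calO$ with $B^\times\backslash\widehat B^\times/\widehat\calO^\times$ in the commutative square~\eqref{eqn:clsosimstcl}, the composite isomorphism $\StCl(\calO)\simeq\Cl_{G(\calO)}R$ is compatible with the stable class map~\eqref{eqn:clsst0} and the reduced-norm map~\eqref{eqn:nrdclso}, so that the group structure transported to $\StCl(\calO)$ is the expected one.
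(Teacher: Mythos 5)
Your proof is correct and takes essentially the same route as the paper: the paper's own argument likewise applies the reduced norm to Fr\"ohlich's double-coset description on the right-hand side of \eqref{eqn:clsosimstcl} (with the Hasse--Schilling--Maass theorem implicit in the phrase ``isomorphism of groups onto its image'') and then invokes Lemma~\ref{l:cliso}. Your remark that the middle term is naturally $F^\times_{>0}\backslash\widehat F^\times\!/\!\nrd(\widehat \calO^\times)$, matching the last isomorphism of Lemma~\ref{l:cliso}(b), is the correct reading of the displayed statement.
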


\begin{proof}
Apply the reduced norm to the right-hand side of \eqref{eqn:clsosimstcl} (an isomorphism of groups onto its image) and then apply Lemma~\ref{l:cliso}.
\end{proof}

Since $\Cl_{G(\calO)} R$ is the most accessible of these groups, in the remainder of the paper we will always work with it.

As a consequence of these isomorphisms, we can interpret the fibers of the reduced norm map $\nrd \colon \Cls\calO \to \Cl_{G(\calO)} R$ as parametrizing the isomorphism classes of locally principal right fractional $\calO$-ideals in a given stable isomorphism class.

 \section{Characterization}  \label{sec:characterization}
 
In this section, we characterize the locally free cancellation and Hermite properties in terms of masses and compare the two properties.
 
\subsection*{Masses}
 
We begin with a quick lemma.  
  
\begin{lemma} \label{lem:mass-equiv}
  The following are equivalent:
  \begin{enumroman}
  \item \label{prop:mass-equiv:expr}
    For all $[\frakb] \in \Cl_{G(\calO)} R$,
    \[
      \mass(\Cls^{[\frakb]}  O) = \frac{\mass(\Cls\calO)}{\card{\Cl_{G(\calO)} R}};
    \]
  \item \label{prop:mass-equiv:indep} $\mass(\Cls^{[\frakb]} O)$ is independent of the class $[\frakb] \in \Cl_{G(\calO)} R$; and
  \item \label{prop:mass-equiv:genus} $\mass(\Cls^{[R]} O) = \mass(\Cls^{[R]} \calO')$ for every order $\calO'$ that is locally isomorphic to $\calO$.
  \end{enumroman}
\end{lemma}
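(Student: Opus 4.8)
The implications $\ref*{prop:mass-equiv:expr} \Rightarrow \ref*{prop:mass-equiv:indep}$ is immediate: the right-hand side of the displayed formula in \ref*{prop:mass-equiv:expr} does not depend on $[\frakb]$. For $\ref*{prop:mass-equiv:indep} \Rightarrow \ref*{prop:mass-equiv:expr}$, I would use the fact that $\nrd \colon \Cls\calO \to \Cl_{G(\calO)} R$ is \emph{surjective} (stated around \eqref{eqn:nrdclso}), so the fibers $\Cls^{[\frakb]}\calO$, for $[\frakb]$ ranging over the finitely many classes in $\Cl_{G(\calO)} R$, partition $\Cls\calO$; hence $\mass(\Cls\calO) = \sum_{[\frakb]} \mass(\Cls^{[\frakb]}\calO)$, and if every summand equals a common value $v$, then $v = \mass(\Cls\calO)/\card{\Cl_{G(\calO)} R}$, which is exactly \ref*{prop:mass-equiv:expr}.

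The substance is the equivalence of \ref*{prop:mass-equiv:indep} with \ref*{prop:mass-equiv:genus}, which ties the independence-of-fiber statement to a \emph{genus} statement about the kernel fiber $\Cls^{[R]}$. The key input is a correspondence, for each class $[\frakb] \in \Cl_{G(\calO)} R$, between the fiber $\Cls^{[\frakb]}\calO$ and the kernel fiber $\Cls^{[R]}\calO'$ of a suitably chosen order $\calO'$ in the genus of $\calO$: concretely, picking an idele $\widehat\beta$ with $\nrd(\widehat\beta)$ representing $[\frakb]$ and setting $\calO' = \calOR(\widehat\beta \widehat\calO)$ (the right order of the corresponding locally principal ideal), one gets $\calO'$ locally isomorphic to $\calO$, and right-multiplication by $\widehat\beta$ induces a mass-preserving bijection between $\Cls^{[R]}\calO'$ and $\Cls^{[\frakb]}\calO$ (left orders, hence masses, are preserved under this translation). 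Running $[\frakb]$ over $\Cl_{G(\calO)} R$ realizes every $\Cls^{[R]}\calO'$ arising from the genus, and conversely; so \ref*{prop:mass-equiv:indep}, which says all $\mass(\Cls^{[\frakb]}\calO)$ agree, is equivalent to \ref*{prop:mass-equiv:genus}, which says all $\mass(\Cls^{[R]}\calO')$ over the genus agree. This dictionary is most cleanly set up adelically using the identifications $\Cls\calO \leftrightarrow B^\times\backslash\widehat B^\times/\widehat\calO^\times$ and Corollary~\ref{c:stcl}.

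The main obstacle will be pinning down this correspondence precisely: one must check that every order $\calO'$ locally isomorphic to $\calO$ does arise (up to conjugacy, which suffices since conjugation preserves class sets and masses) as a right order $\calOR(\widehat\beta\widehat\calO)$ for some idele $\widehat\beta$, and that distinct classes $[\frakb]$ do not create a spurious mismatch. The first point follows because two orders are in the same genus exactly when their adelic completions are conjugate, and adjusting by an element of $B^\times$ lets one normalize; the reduced norm of the conjugating idele then lands in a well-defined class of $\Cl_{G(\calO)} R$ by Corollary~\ref{c:stcl}. Once the bijections $\Cls^{[\frakb]}\calO \leftrightarrow \Cls^{[R]}\calO'$ are in hand and shown to preserve the $[\calOL(\cdot)^\times : R^\times]$ weights, the equivalence $\ref*{prop:mass-equiv:indep} \Leftrightarrow \ref*{prop:mass-equiv:genus}$ drops out, completing the cycle of implications.
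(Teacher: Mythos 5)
Your proposal is correct and takes essentially the same route as the paper: (i)$\Leftrightarrow$(ii) via surjectivity of $\nrd$ and the fiber decomposition of $\mass(\Cls\calO)$, and (ii)$\Leftrightarrow$(iii) via a connecting ideal between $\calO$ and an order $\calO'$ in its genus, translation by which gives mass-preserving bijections $\Cls^{[\frakb]}\calO \to \Cls^{[\frakb\frakc^{-1}]}\calO'$ because left orders (hence the weights $[\calOL(\cdot)^\times:R^\times]$) are unchanged while the norm class shifts by $[\frakc]=[\nrd(J)]$. One small slip to fix: $\calOR(\widehat\beta\widehat\calO)$ is just $\calO$ itself, so the order you want is the left order $\calOL(\widehat\beta\widehat\calO)$ (equivalently $\calOR(\widehat\calO\widehat\beta)$), which is exactly the paper's choice $\calO'=\calOL(J)$ in the direction (iii)$\Rightarrow$(ii).
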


\begin{proof}
The equivalence
  \ref*{prop:mass-equiv:expr}${}\Leftrightarrow{}$\ref*{prop:mass-equiv:indep} follows from the surjectivity of \eqref{eqn:nrdclso}.

  To set up the equivalence \ref*{prop:mass-equiv:indep}${}\Leftrightarrow{}$\ref*{prop:mass-equiv:genus}, let $\calO'$ be locally isomorphic to $\calO$.
  Then there exists a locally principal right $O$-ideal that is also a locally principal left $\calO'$-ideal \cite[Lemma 17.4.6; {\citealp[Lemme I.4.10]{Vigneras80}}]{Voight18}.
  Let $[\frakc] = [\nrd(J)] \in \Cl_{G(\calO)} R$.
  The ideal $J$ induces a bijection
  \begin{equation}
  \begin{aligned}
    \mu\colon \Cls\calO &\xrightarrow{\sim} \Cls\calO' \\
    [I] &\mapsto [IJ^{-1}].
  \end{aligned}
  \end{equation}
  Noting that also $\calOL(IJ^{-1}) = \calOL(I)$, we see that $\mu$ preserves the mass of each class.
  Since $\nrd(IJ^{-1}) = \nrd(I)\nrd(J)^{-1}$ and $\Cl_{G(\calO)} R = \Cl_{G(O')} R$, this in turn induces mass-preserving bijections $\Cls^{[\frakb]} O \to \Cls^{[\frakb \frakc^{-1}]} \calO'$.

  For the direction \ref*{prop:mass-equiv:indep}${}\Rightarrow{}$\ref*{prop:mass-equiv:genus}, we take $J$ to be a connecting $\calO',\calO$-ideal and conclude from the previous paragraph that
  \[
    \mass(\Cls^{[R]} \calO') = \mass(\Cls^{[\frakc]} O) = \mass(\Cls^{[R]} O).
  \]
  For the converse \ref*{prop:mass-equiv:indep}${}\Leftarrow{}$\ref*{prop:mass-equiv:genus}, let $J$ be a locally principal right $\calO$-ideal  with $[\nrd(J)]=[\frakb]$, and let $\calO'=\calOL(J)$; then
  \[
    \mass(\Cls^{[\frakb]} O) = \mass(\Cls^{[R]} \calO') = \mass(\Cls^{[R]} O). \qedhere
  \]
\end{proof}

We now prove the promised characterizations.

 \begin{proposition} \label{prop:lfcanc}
   The following are equivalent.
   \begin{enumroman}
   \item\label{lfc:module} $\calO$ has locally free cancellation.
   \item\label{lfc:full} The monoid of isomorphism classes of locally free, finitely generated right $\calO$-modules is cancellative.
   \item\label{lfc:ideal} Locally principal right \textup{(}fractional\textup{)} $\calO$-ideals that are stably isomorphic are isomorphic.
   \item\label{lfc:fiber} The map $\nrd\colon \Cls\calO \to \Cl_{G(\calO)} R$ is injective, and hence bijective.
   \item\label{lfc:fibermass1}
     For every $[\frakb] \in \Cl_{G(\calO)} R$ and $I$ a right \textup{(}fractional\textup{)} $O$-ideal with $[\nrd(I)]=[\frakb]$, 
     \[
       \mass(\Cls^{[\frakb]} O) = \frac{1}{[\calOL(I)^\times:R^\times]}.
     \]
   \item\label{lfc:card} We have $\card{\Cls\calO} = \card{\Cl_{G(\calO)} R}$.
   \end{enumroman}
   Moreover, these statements are equivalent to the corresponding statements for left $\calO$-modules.
 \end{proposition}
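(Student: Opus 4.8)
The plan is to prove the six conditions are equivalent by establishing a cycle of implications, with the mass characterization from Lemma~\ref{lem:mass-equiv} doing the analytic heavy lifting. The equivalence \ref{lfc:module}${}\Leftrightarrow{}$\ref{lfc:full} is immediate from Lemma~\ref{l:fullcancellation}, and \ref{lfc:module}${}\Rightarrow{}$\ref{lfc:ideal} is trivial (ideals are in particular locally free modules). The implication \ref{lfc:ideal}${}\Rightarrow{}$\ref{lfc:module} follows from Lemma~\ref{lem:IJstab}(a): every nonzero locally free finitely generated module $M$ is of the form $\calO^m \oplus I$ for a locally principal $I$ whose stable class depends only on $\stc{M}$, so if $M \simeqst N$ then the corresponding ideals $I, J$ satisfy $\stc{I} = \stc{J}$, hence $I \simeq J$ by \ref{lfc:ideal}, hence $M \simeq N$. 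The equivalence \ref{lfc:ideal}${}\Leftrightarrow{}$\ref{lfc:fiber} is a reinterpretation: by Corollary~\ref{c:stcl} the stable class map $\st\colon \Cls\calO \to \StCl\calO \simeq \Cl_{G(\calO)} R$ is identified with $\nrd$, and its fibers are exactly the sets of isomorphism classes of locally principal ideals within a fixed stable isomorphism class (as remarked after Corollary~\ref{c:stcl}); thus \ref{lfc:ideal} says each fiber of $\nrd$ is a singleton, i.e.\ $\nrd$ is injective, and surjectivity is \eqref{eqn:nrdclso}.

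The remaining content is the passage through masses. For \ref{lfc:fiber}${}\Rightarrow{}$\ref{lfc:fibermass1}: if $\nrd$ is injective, then $\Cls^{[\frakb]}\calO = \{[I]\}$ is a single class, so by definition $\mass(\Cls^{[\frakb]}\calO) = 1/[\calOL(I)^\times : R^\times]$. For \ref{lfc:fibermass1}${}\Rightarrow{}$\ref{lfc:fiber}: suppose the fiber $\Cls^{[\frakb]}\calO = \{[I_1], \dots, [I_r]\}$. Applying \ref{lfc:fibermass1} to each representative $I_i$ gives $\mass(\Cls^{[\frakb]}\calO) = 1/[\calOL(I_i)^\times:R^\times]$ for every $i$; but $\mass(\Cls^{[\frakb]}\calO) = \sum_{i=1}^r 1/[\calOL(I_i)^\times:R^\times]$ by definition, so summing $r$ equal positive terms forces $r \cdot \mass(\Cls^{[\frakb]}\calO) = \mass(\Cls^{[\frakb]}\calO)$, whence $r=1$. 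For the link to \ref{lfc:card}: the map $\nrd$ is surjective onto a set of size $\card{\Cl_{G(\calO)} R}$, so $\card{\Cls\calO} \geq \card{\Cl_{G(\calO)} R}$ always, with equality iff $\nrd$ is injective; this gives \ref{lfc:fiber}${}\Leftrightarrow{}$\ref{lfc:card} directly, without even invoking masses.

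The main obstacle — and the reason condition \ref{lfc:fibermass1} is phrased the way it is — is that the hypothesis of \ref{lfc:fibermass1} fixes a single ideal $I$ with $[\nrd(I)] = [\frakb]$, not all of them a priori; the argument above needs that the stated mass formula, when assumed for \emph{one} choice of $I$ in the fiber, actually self-propagates to all choices, which is exactly what the summation-of-equal-terms trick delivers once one observes any other representative $I_i$ of the same fiber also has $[\nrd(I_i)] = [\frakb]$ and so is an admissible choice. One should also double-check the edge case that every fiber of $\nrd$ is nonempty (so that "$r$" is at least $1$), which is surjectivity of \eqref{eqn:nrdclso}. Finally, the parenthetical "Moreover" clause follows because the standard (main) involution of $B$ induces a mass- and norm-preserving bijection between $\Cls\sbr\calO$ and $\Cls\sbl\calO$ intertwining the two reduced norm maps, so every condition \ref{lfc:module}--\ref{lfc:card} is left-right symmetric; alternatively one invokes the already-noted fact that right Hermite equals left Hermite together with the symmetry of Lemma~\ref{l:fullcancellation}.
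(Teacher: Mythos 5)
Your proposal is correct and takes essentially the same route as the paper: \ref{lfc:module}$\Leftrightarrow$\ref{lfc:full} via Lemma~\ref{l:fullcancellation}, \ref{lfc:ideal}$\Leftrightarrow$\ref{lfc:module} via Lemma~\ref{lem:IJstab}, \ref{lfc:ideal}$\Leftrightarrow$\ref{lfc:fiber} via Corollary~\ref{c:stcl}, and the mass and cardinality conditions handled by positivity of the mass terms and surjectivity of $\nrd$, exactly as in the paper (your summation-of-equal-terms argument is just a spelled-out version of the paper's ``direct'' step). The only cosmetic difference is the left--right symmetry, which you derive from the standard involution giving a mass- and norm-preserving bijection $\Cls\sbr\calO \leftrightarrow \Cls\sbl\calO$ rather than from $\card{\Cls\sbr\calO}=\card{\Cls\sbl\calO}$ together with \ref{lfc:card}; this is the same underlying fact (your parenthetical ``alternative'' via the Hermite property is not needed and would not suffice on its own, but the main argument stands).
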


 \begin{proof}
   \ref*{lfc:module}${}\Leftrightarrow{}$\ref*{lfc:full} by Lemma~\ref{l:fullcancellation}.
   \ref*{lfc:module}${}\Rightarrow{}$\ref*{lfc:ideal} holds by definition, and   \ref*{lfc:ideal}${}\Rightarrow{}$\ref*{lfc:module} follows from Lemma \ref{lem:IJstab}: if $M,N$ are nonzero stably isomorphic, locally free right $\calO$-modules of finite rank, writing $M \simeq \calO^m \oplus I$ and $N \simeq \calO^m \oplus J$, we have $I \simeqst J$, so by hypothesis $I \simeq J$, hence $M \simeq N$.
   
   For the equivalence \ref*{lfc:ideal}${}\Leftrightarrow{}$\ref*{lfc:fiber}, 
   two locally principal right fractional $\calO$-ideals are isomorphic if and only if $[I]=[J]$ in $\Cls\calO$, and they are stably isomorphic if and only if $[\nrd(I)]=[\nrd(J)]$ in $\Cl_{G(\calO)} R$ in view of Corollary~\ref{c:stcl}.

   The equivalences \ref*{lfc:fiber}${}\Leftrightarrow{}$\ref*{lfc:fibermass1} is direct, since $\nrd([I])=[\frakb]$. The equivalence \ref*{lfc:fiber}${}\Leftrightarrow{}$\ref*{lfc:card} follows as $\nrd$ is surjective.

   Finally, the corresponding equivalences hold for left modules, and the left-right symmetry follows from $\card{\Cls\sbr O} = \card{\Cls\sbl O}$ and \ref*{lfc:card}.
 \end{proof}

 \begin{proposition} \label{prop:sff}
   The following are equivalent.
   \begin{enumroman}
   \item\label{sff:module} $\calO$ is a Hermite ring.
   \item\label{sff:ideal} Every stably free right $\calO$-ideal is free.
   \item\label{sff:fiber} The kernel of the stable class map is trivial, that is, $\#\Cls^{[R]}(O)=1$.
   \item\label{sff:mass} We have
     \[
       \mass(\Cls^{[R]} O) = \frac{1}{[O^\times:R^\times]}.
     \]
   \end{enumroman}
   Moreover, these statements are equivalent to the corresponding statements for left $\calO$-modules.
 \end{proposition}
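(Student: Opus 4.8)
The plan is to establish the cycle of equivalences \ref*{sff:module}${}\Leftrightarrow{}$\ref*{sff:ideal}${}\Leftrightarrow{}$\ref*{sff:fiber}${}\Leftrightarrow{}$\ref*{sff:mass}, largely by specializing the arguments of Proposition~\ref{prop:lfcanc} to the distinguished fiber over $[R] \in \Cl_{G(\calO)} R$, and then deduce the left-right symmetry.

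First I would handle \ref*{sff:module}${}\Leftrightarrow{}$\ref*{sff:ideal}. The implication \ref*{sff:module}${}\Rightarrow{}$\ref*{sff:ideal} is immediate since a stably free $\calO$-ideal is in particular a stably free, locally free $\calO$-module of rank one. For the converse, given a locally free, finitely generated $M$ with $M \simeqst \calO^m$, I would invoke Lemma~\ref{lem:IJstab}(a) to write $M \simeq \calO^{m-1} \oplus I$ for a locally principal right $\calO$-ideal $I$; comparing stable classes (and using that $\stc{I}$ is uniquely determined by $\stc{M}$, together with $\stc{\calO^m} = \stc{\calO^{m-1} \oplus \calO}$) gives $I \simeqst \calO$, so by hypothesis $I$ is free, hence $I \simeq \calO$ and $M \simeq \calO^m$. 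The rank-zero case $M = 0$ is trivial.

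Next, \ref*{sff:ideal}${}\Leftrightarrow{}$\ref*{sff:fiber} is essentially a translation. By definition the kernel of the stable class map \eqref{eqn:clsst0} is $\Cls^{[R]}\calO$, and by the commutative diagram \eqref{eqn:clsosimstcl} together with Corollary~\ref{c:stcl}, the class $[I] \in \Cls\calO$ lies in this kernel if and only if $I$ is stably free. So $\#\Cls^{[R]}\calO = 1$ exactly says the only stably free locally principal right $\calO$-ideal up to isomorphism is $[\calO]$, i.e. every stably free right $\calO$-ideal is free. (Here I am also using, as noted in the Froehlich-quotation remarks preceding the section, that stably free implies locally free, so restricting to locally free ideals loses nothing.) Then \ref*{sff:fiber}${}\Leftrightarrow{}$\ref*{sff:mass} follows from the definition of mass: the fiber $\Cls^{[R]}\calO$ always contains the class $[\calO]$, which contributes $1/[\calO^\times:R^\times]$ to $\mass(\Cls^{[R]}\calO)$, and every other class contributes a strictly positive rational; hence the mass equals $1/[\calO^\times:R^\times]$ if and only if $[\calO]$ is the only class in the fiber.

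Finally, for the left-right symmetry, the cleanest route is to observe that the standard involution on $B$ induces a bijection $\Cls\sbr\calO \leftrightarrow \Cls\sbl\calO$ carrying $[\calO]$ to $[\calO]$ and respecting the reduced norm (since $\nrd$ is invariant under the involution), hence restricting to a bijection between the respective kernels of the stable class maps; so \ref*{sff:fiber} holds on the right if and only if it holds on the left. Alternatively one cites the already-noted fact that a ring is left Hermite iff it is right Hermite. The main obstacle, such as it is, is purely bookkeeping: making sure the identification of $\ker(\st)$ with the set of stably free ideal classes is correctly pulled back through the isomorphisms of \eqref{eqn:clsosimstcl} and Corollary~\ref{c:stcl}, and being careful that ``stably free'' for a rank-one module means stably isomorphic to $\calO^m$ for \emph{some} $m$, not necessarily $m=1$—this is exactly what Lemma~\ref{lem:IJstab}(a) is needed to sort out.
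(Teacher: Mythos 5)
Your proposal is correct and follows essentially the same route as the paper, whose proof of Proposition~\ref{prop:sff} is simply ``As in Proposition~\ref{prop:lfcanc}'': you specialize the module--ideal reduction of Lemma~\ref{lem:IJstab}, the identification of the kernel of the stable class map with $\Cls^{[R]}\calO$ via Corollary~\ref{c:stcl}, and the positivity of mass contributions, exactly as the paper intends. The left--right symmetry via the standard involution (or the dualization remark already in the text) is likewise the argument the paper has in mind.
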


 \begin{proof}
 As in Proposition \ref{prop:lfcanc}.
 \end{proof}

\subsection*{Further remarks}

We give a few remarks on extensions and other characterizations of the above.  

 \begin{remark}
   With the exception of Propositions \ref{prop:lfcanc}\ref{lfc:fibermass1} and \ref{prop:sff}\ref{sff:mass} (replace with a volume) and with suitable changes to the definition of $\Cl_{G(\calO)} R$ to account for the archimedean places, similar characterizations hold for indefinite orders over number fields.  But for indefinite orders, strong approximation already implies that the map $\nrd\colon \Cls\calO \to \Cl_{G(\calO)} R$ is bijective, so we always have locally free cancellation.
 \end{remark}

 \begin{remark}
   By yet another characterization, the ring $\calO$ is a Hermite ring if and only if every unimodular row (or column) of rank $n \geq 1$ can be extended to an invertible $n\times n$-matrix; that is, $\calO$ is Hermite if and only if the \emph{general linear rank} of $\calO$ is $1$ \cite[11.1.14]{McConnellRobson01}.
   \end{remark}
   
   \begin{remark}
   Since $\calO$ has Krull dimension $1$, its stable rank (and hence its general linear rank), is at most $2$ \cite[11.3.7]{McConnellRobson01}.
   From this it follows---whether or not $\calO$ is Hermite---that every stably free module of rank $\ge 2$ is free, a fact which can also be seen by an application of strong approximation.  
   
   Moreover, if $M,N$ are locally free right $\calO$-modules of rank $m\ge 2$ and $M \simeqst N$, then  $M \simeq N$.
   Indeed, writing $M=\calO^{m-1} \oplus I$ with $I$ a locally principal right $\calO$-ideal, the claim follows from McConnell--Robson \cite[11.4.8]{McConnellRobson01}.
   This gives another justification for \ref*{sff:ideal}$\Rightarrow$\ref*{sff:module} in Propositions \ref{prop:lfcanc} and \ref{prop:sff}.
   For $M$, $N$ of rank $1$, we still get $M \oplus O \simeq N \oplus O$, but of course we might not be able to cancel the final factor $\calO$.
 \end{remark}
 
  There is a stronger connection between the two properties than it might initially seem, as the following result shows.

 \begin{proposition} \label{prop:canc-sff}
   The following statements are equivalent.
   \begin{enumroman}
   \item \label{canc-sff:canc} $\calO$ has locally free cancellation.
   \item \label{canc-sff:sff} Every order $\calO'$ locally isomorphic to $\calO$ is Hermite.
   \end{enumroman}
 \end{proposition}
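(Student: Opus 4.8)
The plan is to reduce both implications to counting the fibers of the reduced norm map $\nrd\colon\Cls\calO\to\Cl_{G(\calO)}R$ and to exploit the connecting-ideal bijections already constructed in the proof of Lemma~\ref{lem:mass-equiv}. First recall the reformulations: by Proposition~\ref{prop:lfcanc}\ref{lfc:fiber} (together with surjectivity of $\nrd$), the order $\calO$ has locally free cancellation if and only if $\card{\Cls^{[\frakb]}\calO}=1$ for every $[\frakb]\in\Cl_{G(\calO)}R$; and by Proposition~\ref{prop:sff}\ref{sff:fiber} an order $\calO'$ is Hermite if and only if $\card{\Cls^{[R]}\calO'}=1$. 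The key input, extracted from the proof of Lemma~\ref{lem:mass-equiv}, is this: if $\calO'$ is locally isomorphic to $\calO$ and $J$ is a locally principal right $\calO$-ideal that is simultaneously a locally principal left $\calO'$-ideal, then, setting $[\frakc]=[\nrd(J)]\in\Cl_{G(\calO)}R=\Cl_{G(\calO')}R$, the map $[I]\mapsto[IJ^{-1}]$ is a bijection $\Cls\calO\xrightarrow{\sim}\Cls\calO'$ carrying $\Cls^{[\frakb]}\calO$ onto $\Cls^{[\frakb\frakc^{-1}]}\calO'$ for each $[\frakb]$; in particular $\card{\Cls^{[\frakb\frakc^{-1}]}\calO'}=\card{\Cls^{[\frakb]}\calO}$.

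For \ref*{canc-sff:canc}$\Rightarrow$\ref*{canc-sff:sff}: given $\calO'$ locally isomorphic to $\calO$, I would pick a connecting ideal $J$ as above (it exists by \cite[Lemma~17.4.6]{Voight18}). Since $[\frakb]\mapsto[\frakb\frakc^{-1}]$ permutes $\Cl_{G(\calO')}R$, every fiber of $\nrd$ over $\calO'$ has cardinality equal to some fiber over $\calO$, hence equals $1$; so $\calO'$ has locally free cancellation and is therefore Hermite. (In other words, locally free cancellation is a genus invariant.) For \ref*{canc-sff:sff}$\Rightarrow$\ref*{canc-sff:canc}: fix $[\frakb]\in\Cl_{G(\calO)}R$ and, using surjectivity of $\nrd$, choose a locally principal right $\calO$-ideal $J$ with $[\nrd(J)]=[\frakb]$; put $\calO'=\calOL(J)$. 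Locally $J_\fp=\beta_\fp\calO_\fp$, so $\calO'_\fp=\beta_\fp\calO_\fp\beta_\fp^{-1}$ and $\calO'$ lies in the genus of $\calO$; by hypothesis $\calO'$ is Hermite, i.e.\ $\card{\Cls^{[R]}\calO'}=1$. Now $J$ is a right $\calO$-ideal and a left $\calO'$-ideal with $[\frakc]=[\nrd(J)]=[\frakb]$, so the bijection above gives $\card{\Cls^{[\frakb]}\calO}=\card{\Cls^{[R]}\calO'}=1$; as $[\frakb]$ was arbitrary, $\calO$ has locally free cancellation.

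I do not expect a real obstacle here, as the content is bookkeeping with fiber cardinalities; the points needing care are routine and already implicit in Lemma~\ref{lem:mass-equiv}---namely that $\calOL(J)$ genuinely lies in the genus of $\calO$ and that $\Cl_{G(\calO)}R$ depends only on the genus (both being governed by the local invariants $\nrd(\calO_\fp^\times)$)---and, the one place requiring a moment's thought, fixing the handedness of the connecting ideal so that $[I]\mapsto[IJ^{-1}]$ truly lands in $\Cls\calO'$ and shifts the reduced-norm class by exactly $[\frakc]^{-1}$. One could run the same argument with the mass characterizations Proposition~\ref{prop:lfcanc}\ref{lfc:fibermass1} and Proposition~\ref{prop:sff}\ref{sff:mass}, since $[I]\mapsto[IJ^{-1}]$ also preserves $[\calOL(I)^\times:R^\times]$, but the cardinality version is slightly cleaner because it avoids the fact that unit indices can vary within a genus.
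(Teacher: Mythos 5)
Your proposal is correct and is in essence the paper's own argument: both directions rest on a connecting ideal $J$ between $\calO$ and an order $\calO'$ in its genus, the multiplicativity of $\nrd$ on compatible products, and the identification $\Cl_{G(\calO)} R = \Cl_{G(\calO')} R$, with your map $[I]\mapsto[IJ^{-1}]$ being exactly the manipulation the paper carries out on individual ideals (take $\calO'=\calOL(J)$, note $IJ^{-1}$ is stably free, etc.). The only difference is cosmetic: you package the argument as fiber-cardinality bookkeeping through the bijection from the proof of Lemma~\ref{lem:mass-equiv}, which along the way records the slightly stronger (and correct) observation that locally free cancellation is a genus invariant.
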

 
 \begin{proof}
   First we prove \ref*{canc-sff:canc}${}\Rightarrow{}$\ref*{canc-sff:sff}.
   Let $\calO'$ be locally isomorphic to $\calO$.
   As in the proof of Lemma \ref{lem:mass-equiv}, there exists a locally principal right $\calO$-ideal that is also a locally principal left $\calO'$-ideal.
   Let $J$ be a stably free right $\calO'$-ideal.
   Then the compatible product $JI$ is a locally principal right $\calO$-ideal with $\nrd(J I) = \nrd(J)\nrd(I)$ \cite[Chapter 16]{Voight18}.
   Thus $[\nrd(J I)] = [\nrd(J)] [\nrd(I)] \in \Cl_{G(\calO)} R$.
   Since $\calO$ and $\calO'$ are locally isomorphic, we have $\PIdl_{S,\calO} R = \PIdl_{S,O'} R$ and hence can identify $\Cl_{G(\calO)} R = \Cl_{G(O')} R$.
   Because $J$ is stably free as right $\calO'$-ideal, we have $[\nrd(J)] = [R]$, and conclude $[\nrd(JI)] = [\nrd(I)]$.
   Since $\calO$ has locally free cancellation, then $J I \simeq I$ as right $\calO$-ideals, which implies that $J$ is a principal right $\calO'$-ideal.

   Next we prove \ref*{canc-sff:sff}${}\Rightarrow{}$\ref*{canc-sff:canc}.
   Let $I,J$ be stably isomorphic, locally principal right $\calO$-ideals.
   Let $\calO' \colonequals \calOL(J)$.
   Then $\calO'$ is locally isomorphic to $\calO$, and hence a Hermite ring by hypothesis.
   The inverse $J^{-1}$ is a fractional left $\calO$-ideal with $\nrd(J^{-1}) = \nrd(J)^{-1}$.
   Hence, $IJ^{-1}$ is a compatible product with $[\nrd(IJ^{-1})] = [\nrd(I)][\nrd(J)]^{-1} = [R] \in \Cl_{G(O')} R$.
   Thus, $IJ^{-1}$ is stably free, and by assumption on $\calO'$, $IJ^{-1}=\alpha \calO'$ is free with $\alpha \in B^\times$.
   Multiplying on the right by $J$, we see $I=\alpha J$ so $I \simeq J$ as right $\calO$-ideals.
 \end{proof}

 \section{Masses and suborders} \label{sec:masses}

 Our goal in this section is to investigate the behavior of $\mass(\Cls^{[R]} O)$ and $\Cl_{G(\calO)} R$ as the order $\calO$ varies.  The main result is Theorem~\ref{thm:mass}, which in particular implies that if $\calO$ and $\calO'$ are locally isomorphic orders, then 
 \begin{equation} 
 \mass(\Cls^{[R]} \calO) = \mass(\Cls^{[R]} \calO') 
 \end{equation}
 from which we deduce
 \begin{equation} 
 \mass(\Cls^{[R]} \calO) = \frac{\mass(\Cls \calO)}{\card{\Cl_{G(\calO)} R}}.
 \end{equation} 
 This opens the way for our computations starting in the next section, since $\mass(\Cls\calO)$ can be computed using the Eichler mass formula.
 
\subsection*{Extension of ideals}

We now compare masses a bit more carefully in preparation for our main result.  

Let $O \subseteq \calO'$ be orders, and let $\rho=\rho_{O,O'}$ be the extension map from locally principal right $\calO$-ideals to locally principal right $\calO'$-ideals defined by $\rho(I)=IO'$.  The fibers of $\rho$ have finite cardinality.  Let 
\begin{equation}
\begin{aligned}
\overline \rho \colon \Cls\calO &\to \Cls\calO' \\
 [I] &\mapsto [IO']
 \end{aligned}
 \end{equation} 
 denote the induced map on right ideal classes.  The extension map is compatible with the reduced norm map: let $\varphi=\varphi_{G(\calO), G(O')}\colon \Cl_{G(\calO)} R \to \Cl_{G(O')} R$ denote the canonical epimorphism, then the diagram
\begin{equation} \label{eqn:OOwithCl}
  \begin{tikzcd}
    \Cls\calO' \arrow[r, "\nrd"] & \Cl_{G(O')} R \\
    \Cls\calO \arrow[r, "\nrd"] \arrow[u, "\rho_{O,O'}"] & \Cl_{G(\calO)} R  \arrow[u, "\varphi_{G(\calO),G(O')}", swap].
  \end{tikzcd}
\end{equation}
is commutative.  Moreover, the right-hand map (a natural projection) only depends on the genera of $\calO$ and $\calO'$.  
 
 The map $\overline{\rho}$ is fiber-by-fiber surjective, as follows.  
 (See also Fr\"ohlich \cite[VIII]{Froehlich75}.)
 
\begin{lemma} \label{lem:fiber-surjective}
  Let $[\frakb] \in \Cl_{G(\calO)} R$.
  Then $\overline \rho|_{\Cls^{[\frakb]} O} \colon \Cls^{[\frakb]} O \to \Cls^{\varphi([\frakb])} \calO'$ is surjective, and $\overline{\rho}$ is surjective.
\end{lemma}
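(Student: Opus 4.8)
The plan is to argue idelically, via the standard bijections $\Cls\calO\leftrightarrow B^\times\backslash\widehat B^\times/\widehat\calO^\times$ and $\Cls\calO'\leftrightarrow B^\times\backslash\widehat B^\times/\widehat{\calO'}^\times$, under which $\overline\rho$ is the map induced by the inclusion $\widehat\calO\subseteq\widehat{\calO'}^\times$, namely the natural projection of double coset spaces; this already yields surjectivity of $\overline\rho$. (It is convenient to note that, since $\calO\subseteq\calO'$, any prime $\frakp$ with $\calO'_\frakp$ non-maximal also has $\calO_\frakp$ non-maximal, so a common finite set $S$ serves for both orders and $\varphi$ is the evident projection $\Idl_S R/\PIdl_{S,\calO}R\to\Idl_S R/\PIdl_{S,\calO'}R$.) For the fiber-by-fiber assertion, fix $[\frakb]\in\Cl_{G(\calO)}R$ and $[J']\in\Cls^{\varphi([\frakb])}\calO'$, use surjectivity of $\overline\rho$ to choose $[I_0]\in\Cls\calO$ with $\overline\rho([I_0])=[J']$, and fix an idele $\widehat\alpha=(\alpha_\frakp)_\frakp$ with $(I_0)_\frakp=\alpha_\frakp\calO_\frakp$. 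Unwinding the coset equality $B^\times\widehat\beta\widehat{\calO'}^\times=B^\times\widehat\alpha\widehat{\calO'}^\times$ then shows that $\overline\rho^{-1}([J'])$ consists precisely of the classes of the right $\calO$-ideals $\widehat\alpha\widehat u\widehat\calO$ (the ideal with $\frakp$-part $\alpha_\frakp u_\frakp\calO_\frakp$) as $\widehat u$ ranges over $\widehat{\calO'}^\times$.

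Next I would apply the reduced norm together with the identification $\Cl_{G(\calO)}R\simeq F^\times_{>0}\backslash\widehat F^\times/\nrd(\widehat\calO^\times)$ from Lemma~\ref{l:cliso} and Corollary~\ref{c:stcl}: the reduced norm classes attained on the fiber $\overline\rho^{-1}([J'])$ are exactly $[\nrd(I_0)]\cdot H$, where $H\leq\Cl_{G(\calO)}R$ is the image of $\nrd(\widehat{\calO'}^\times)$. The adelic description of $\Cl_{G(\calO')}R$ in Lemma~\ref{l:cliso} identifies $\varphi$ with the projection modulo $\nrd(\widehat{\calO'}^\times)$, so $H=\ker\varphi$. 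Hence the attained norm classes form the full coset $[\nrd(I_0)]\ker\varphi$; and since $\varphi([\nrd(I_0)])=[\nrd(I_0\calO')]=[\nrd(J')]=\varphi([\frakb])$ by the commutative diagram~\eqref{eqn:OOwithCl} and the choice of $[J']$, this coset equals $\varphi^{-1}(\varphi([\frakb]))$ and in particular contains $[\frakb]$. Picking $\widehat u\in\widehat{\calO'}^\times$ with $[\nrd(\alpha)\nrd(u)]=[\frakb]$ then gives $[I]:=[\widehat\alpha\widehat u\widehat\calO]\in\Cls^{[\frakb]}\calO$ with $\overline\rho([I])=[J']$, finishing the proof.

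The steps are all routine translations through the idelic dictionary of Section~\ref{sec:prelim}, so I expect the only genuine obstacle to be bookkeeping: keeping the totally positive normalization $F^\times_{>0}$ (rather than $F^\times$) consistent throughout, so that the identification $H=\ker\varphi$ is an honest equality of subgroups of $\Cl_{G(\calO)}R$ and not merely an equality up to a subgroup supported at the archimedean places. If one prefers to avoid the idelic language, essentially the same argument can be carried out inside $\Idl_S R$: lift $[J']$ to an ideal and scale $I_0$ so that $I_0\calO'=J'$, then adjust $I_0$ prime-by-prime at the primes of $S$ using units of $\calO'_\frakp$ to change $[\nrd(I_0)]$ by a prescribed element of $\ker\varphi$, which is the same computation made concrete.
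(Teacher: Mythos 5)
Your argument is correct, but it takes a genuinely different (and more structural) route than the paper's. The paper's proof is a three-line direct construction: given $[I'] \in \Cls^{\varphi([\frakb])} \calO'$, it chooses the representative so that $\nrd(I') = \frakb$ as an ideal on the nose, writes $I'_\frakp = \alpha_\frakp \calO'_\frakp$, and uses the local--global dictionary to build $I$ with $I_\frakp = \alpha_\frakp \calO_\frakp$; since $I$ has the \emph{same} local generators, $\nrd(I)=\frakb$ exactly and $I\calO'=I'$, so $[I]\in\Cls^{[\frakb]}\calO$ with no group theory needed (the burden of hitting the right class in the finer group $\Cl_{G(\calO)}R$ is absorbed into the global normalization of the representative $I'$). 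You instead fix an arbitrary preimage $I_0$ of $[J']$, describe the entire fiber $\overline\rho^{-1}([J'])$ as the classes of $\widehat\alpha\widehat u\widehat\calO$ with $\widehat u \in \widehat{\calO'}^{\times}$, and hit $[\frakb]$ by twisting with local units, using the identification of $\ker\varphi$ with the image of $\nrd(\widehat{\calO'}^{\times})$ in $\Cl_{G(\calO)}R$ --- which is essentially the content of Proposition~\ref{prop:cgrp-sequence-suborder}, proved later in the paper by weak approximation; since you derive it directly from the adelic description in Lemma~\ref{l:cliso}, there is no circularity. The trade-off: the paper's normalization trick is shorter and stays ideal-theoretic, while your computation yields strictly more, namely an explicit parametrization of each fiber of $\overline\rho$ and of its image under $\nrd$, which anticipates the unit-twisting analysis behind Lemma~\ref{lemma:mass-fiber}. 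Only cosmetic points remain: the inclusion inducing the projection of double cosets should read $\widehat\calO^{\times} \subseteq \widehat{\calO'}^{\times}$, and, as you note yourself, one must keep the totally positive normalization and the compatibility of the ideal-theoretic $\varphi$ with the idelic projection consistent; both are routine.
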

  
\begin{proof}
Let $[I'] \in \Cls^{\varphi([\frakb])}(O')$ so $I'$ is a locally principal fractional right $\calO'$-ideal such that $\nrd(I')=\frakb$.  Write $I'_\frakp=\alpha_\frakp \calO_\frakp'$ for each prime $\frakp$.  Since $\calO_\frakp=\calO_\frakp'$ for all but finitely many $\frakp$, by the local--global dictionary for lattices, there exists an $R$-lattice $I$ such that $I_\frakp=\alpha_\frakp \calO_\frakp$ for all $\frakp$, so $[I] \in \Cls^{[\frakb]}(\calO)$ has $[I\calO']=[I']$ as desired.
Running over $[\frakb]$, we conclude $\overline \rho$ is surjective.
\end{proof}

We deduce the following important result for our classification.
 
 \begin{corollary} \label{cor:canc-overorder}
   Let $O \subseteq \calO'$ be orders.
   If $\calO$ is Hermite or has locally free cancellation, then the same is true for $\calO'$.
 \end{corollary}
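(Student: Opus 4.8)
The plan is to reduce to the cardinality characterizations of the two properties and then push fibers forward along the extension map. By Proposition~\ref{prop:lfcanc}, the order $\calO$ has locally free cancellation precisely when $\nrd\colon \Cls\calO \to \Cl_{G(\calO)} R$ is a bijection, equivalently when every fiber $\Cls^{[\frakb]}\calO$ (in the sense of Definition~\ref{def:stc-fiber}) has exactly one element; and by Proposition~\ref{prop:sff}, $\calO$ is Hermite precisely when the single fiber $\Cls^{[R]}\calO$ has exactly one element. So in both cases the hypothesis translates into certain fibers over $\calO$ being singletons, and I want to conclude the analogous fibers over $\calO'$ are singletons.

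First I would handle locally free cancellation. Assume every fiber $\Cls^{[\frakb]}\calO$ is a singleton. Fix $[\frakb'] \in \Cl_{G(O')} R$; since the map $\varphi = \varphi_{G(\calO),G(O')}$ in the commutative square \eqref{eqn:OOwithCl} is an epimorphism, choose $[\frakb] \in \Cl_{G(\calO)} R$ with $\varphi([\frakb]) = [\frakb']$. By Lemma~\ref{lem:fiber-surjective}, the extension map restricts to a surjection $\overline{\rho}\colon \Cls^{[\frakb]}\calO \twoheadrightarrow \Cls^{\varphi([\frakb])}\calO' = \Cls^{[\frakb']}\calO'$. The domain is a one-element set, so the (nonempty, since $\nrd$ is surjective onto $\Cl_{G(O')} R$) codomain is also a one-element set. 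As $[\frakb']$ was arbitrary, $\nrd\colon \Cls\calO' \to \Cl_{G(O')} R$ is injective, hence bijective, so $\calO'$ has locally free cancellation by Proposition~\ref{prop:lfcanc}.

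For the Hermite case, assume $\#\Cls^{[R]}\calO = 1$. The projection $\varphi$ sends the trivial class to the trivial class, so $\varphi([R]) = [R]$; applying Lemma~\ref{lem:fiber-surjective} with $[\frakb] = [R]$ gives a surjection $\Cls^{[R]}\calO \twoheadrightarrow \Cls^{[R]}\calO'$, whence $\#\Cls^{[R]}\calO' = 1$ and $\calO'$ is Hermite by Proposition~\ref{prop:sff}. I do not expect any real obstacle here: the only points needing a word of care are that $\overline{\rho}$ is surjective \emph{fiber by fiber} (which is exactly Lemma~\ref{lem:fiber-surjective}, and is the reason the statement goes from the smaller order to the larger one rather than the other way), that $\varphi$ is surjective and carries $[R]$ to $[R]$ (both immediate from its construction in \eqref{eqn:OOwithCl}), and the trivial observation that a surjective image of a one-element set is a one-element set.
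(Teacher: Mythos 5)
Your argument is correct and is essentially the paper's own proof: the paper likewise invokes Proposition~\ref{prop:sff}(iii) together with the fiber-by-fiber surjectivity of $\overline{\rho}$ from Lemma~\ref{lem:fiber-surjective} for the Hermite case, and disposes of locally free cancellation by the same mechanism (which you have simply written out in detail via Proposition~\ref{prop:lfcanc} and the surjectivity of $\varphi$). No gaps; nothing further is needed.
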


 \begin{proof}
 By Proposition \ref{prop:sff}(iii), the order $\calO$ is Hermite if and only if $\card{\Cls^{[R]} O} = 1$.  By Lemma~\ref{lem:fiber-surjective}, the map $\Cls^{[R]} O \to \Cls^{[R]} \calO'$ is surjective.  Hence, if $\calO$ is Hermite, so is $\calO'$.

A similar proof works for locally free cancellation.
 \end{proof}

\subsection*{Comparison of masses}

We now look more carefully at the extension map to compare masses.  Let $I'$ be a locally principal right $\calO'$-ideal.
Suppose that there exists a prime $\fp$ such that $\calO_\fq=O'_\fq$ for all primes $\fq \neq \fp$.  Then there exists a transitive right action of $\calO'^\times_\fp$ on $\rho^{-1}(I')$ as follows.  Let $\mu \in \calO'^\times_\fp$.
If $I \in \rho^{-1}(I')$ and $I_\fp=\beta_\fp O_\fp$ with $\beta_\fp \in O_\fp$, we assign to $I$ the unique locally principal right $\calO$-ideal $I \langle \mu \rangle$ such that  $I \langle \mu \rangle = \beta_\fp \mu O_\fp$ and $I\langle \mu \rangle_\fq = I_\fq$ for $\fq \ne \fp$.  The stabilizer of this action is $O_\fp^\times$, hence $\card{\rho^{-1}(I')} = [O'^\times_\fp \colon \calO^\times_\fp]$.  

Repeating this argument over the classes $[I']$, we obtain a bijection
\begin{equation}
  \Cls\calO \leftrightarrow \bigsqcup_{[I'] \in \Cls\calO'} \calOL(I')^\times \backslash\, \rho^{-1}(I').
\end{equation}
See Voight \cite[\S 26.6]{Voight18} for more discussion.

\begin{lemma} \label{lem:doesnotdepend}
If $\calO \subseteq \calO'$ are $R$-orders, then the index $[\widehat\calO'^\times : \widehat\calO^\times]$ is well-defined as a function of the index $[\calO:\calO']$ and the genus of $\calO$ and $\calO'$.  
\end{lemma}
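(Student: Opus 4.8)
The plan is to reduce the global index to a finite product of local indices and then to identify each local factor as a ratio of volumes, so that its dependence on the given data becomes transparent. Concretely: since $\calO \subseteq \calO'$ are $R$-orders, $\calO^\times_\fp \le \calO'^\times_\fp$ at every prime $\fp$, and $\calO_\fp = \calO'_\fp$ (hence $\calO^\times_\fp = \calO'^\times_\fp$) for every $\fp$ not dividing the index $[\calO':\calO]$. Writing $S_0$ for the finite set of primes dividing $[\calO':\calO]$, we get $\widehat\calO^\times = \prod_\fp \calO^\times_\fp$ of finite index $\prod_{\fp \in S_0}[\calO'^\times_\fp:\calO^\times_\fp]$ in $\widehat\calO'^\times = \prod_\fp \calO'^\times_\fp$, with $S_0$ read off from the index. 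So it suffices to show that each local index $[\calO'^\times_\fp:\calO^\times_\fp]$ depends only on the $R_\fp$-isomorphism classes of $\calO_\fp$ and $\calO'_\fp$, since these classes are precisely what the genera of $\calO$ and $\calO'$ record at $\fp$.

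The heart of the matter is as follows. Fix $\fp$ and an additive Haar measure $\nu$ on $B_\fp$, regarded as a $4$-dimensional $F_\fp$-vector space; then $\calO^\times_\fp$ and $\calO'^\times_\fp$ are compact open subsets of $B_\fp$, hence of finite positive $\nu$-measure. Decomposing $\calO'^\times_\fp$ into right cosets of $\calO^\times_\fp$ with representatives $g \in \calO'^\times_\fp$, and noting that right multiplication by $g$ scales $\nu$ by $\abs{\nrd(g)}_\fp^{2} = 1$ (since $\nrd(g) \in R_\fp^\times$), I obtain $[\calO'^\times_\fp : \calO^\times_\fp] = \nu(\calO'^\times_\fp)/\nu(\calO^\times_\fp)$. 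Since conjugation $x \mapsto \beta x \beta^{-1}$ by $\beta \in B_\fp^\times$ multiplies $\nu$ by $\abs{\nrd(\beta)}_\fp^{2}\abs{\nrd(\beta)}_\fp^{-2} = 1$, the measure $\nu$ is conjugation-invariant; and any $R_\fp$-algebra isomorphism between two $R_\fp$-orders in $B_\fp$ extends to an $F_\fp$-algebra automorphism of $B_\fp$, which is inner by the Skolem--Noether theorem. Hence isomorphic $R_\fp$-orders have $B_\fp^\times$-conjugate, and so $\nu$-equal, unit groups: $\nu(\calO^\times_\fp)$ depends only on the isomorphism class of $\calO_\fp$, and $\nu(\calO'^\times_\fp)$ only on that of $\calO'_\fp$. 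Therefore $[\calO'^\times_\fp : \calO^\times_\fp]$ depends only on these two isomorphism classes --- equivalently, only on the genera of $\calO$ and $\calO'$ --- which, together with the reduction above, proves the lemma.

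The point that requires care is that one should not try to run this argument by conjugating the inclusion $\calO_\fp \subseteq \calO'_\fp$ directly: the pair of isomorphism classes of $\calO_\fp$ and $\calO'_\fp$ need not determine this inclusion up to simultaneous $B_\fp^\times$-conjugacy, so there may be no single $\beta$ taking one such pair onto another. Expressing the index as a ratio of volumes attached to the two orders \emph{separately} is precisely what circumvents this. The only computations involved --- that right multiplication by $\beta \in B_\fp^\times$ and conjugation by $\beta$ scale an additive Haar measure on $B_\fp$ by $\abs{\nrd(\beta)}_\fp^2$ and by $1$ respectively --- are routine, following from the fact that the regular representations of $B_\fp$ on itself have determinant $\nrd(\,\cdot\,)^2$; this same computation also shows that $B_\fp^\times$ is unimodular, in case one prefers to phrase the argument with a Haar measure on $B_\fp^\times$ directly.
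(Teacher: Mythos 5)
Your proof is correct, but it takes a genuinely different route from the paper. The paper works purely algebraically with the congruence filtration: choosing $m$ with $\fp^m\calO'_\fp \subseteq \calO_\fp$, it computes
\[
[\calO_\fp'^\times:\calO_\fp^\times] \;=\; \frac{[\calO_\fp'^\times : 1+\fp\calO'_\fp]}{[\calO_\fp^\times : 1+\fp\calO_\fp]}\,[\calO'_\fp:\calO_\fp],
\]
where the unit-group factors are $\#(\calO'_\fp/\fp\calO'_\fp)^\times$ and $\#(\calO_\fp/\fp\calO_\fp)^\times$ (genus data) and the additive indices $[\calO'_\fp:\calO_\fp]$ assemble into the index, matching the statement of the lemma verbatim. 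You instead write the local index as a ratio of additive Haar volumes, $[\calO_\fp'^\times:\calO_\fp^\times]=\nu(\calO_\fp'^\times)/\nu(\calO_\fp^\times)$, and use conjugation-invariance of $\nu$ together with Skolem--Noether to see that each volume is an invariant of the local isomorphism class; your observation that one should \emph{not} try to conjugate the inclusion $\calO_\fp\subseteq\calO'_\fp$ itself (the pair of local isomorphism classes need not determine the embedding up to simultaneous conjugacy) is exactly the right caution, and decoupling the two orders via volumes is what makes the argument work. Your route in fact yields a slightly stronger conclusion---the idelic unit index is determined by the genera alone, with no separate reference to $[\calO':\calO]$---which trivially implies the lemma as stated (and is implicit in the paper's formula too, since $[\calO'_\fp:\calO_\fp]$ is recoverable from reduced discriminants). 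The trade-off: the paper's computation is elementary index bookkeeping and displays the dependence on the index explicitly, which is how the lemma is invoked in Theorem \ref{thm:mass}\ref{mass:indep}; yours leans on standard facts about Haar measure scaling by $\abs{\nrd(\cdot)}_\fp^2$ and Skolem--Noether, and in exchange isolates more cleanly why only the two local isomorphism classes matter.
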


\begin{proof}
We may compute the index locally, so let $\fp$ be prime and let $\fp^m \calO'_\fp \subseteq O_\fp$.  Then
  \begin{equation} \label{eqn:oonotdep}
    [O_{\fp}'^\times: O_{\fp}^\times] = \frac{[O_{\fp}'^\times: 1 + \fp O_{\fp}'][1+\fp O_{\fp}':1+\fp^m O_\fp']}{[O_{\fp}^\times : 1 + \fp O_{\fp}][1+\fp O_{\fp}: 1 + \fp^m O_\fp']} = \frac{[O_{\fp}'^\times: 1 + \fp O_{\fp}']}{[O_{\fp}^\times : 1 + \fp O_{\fp}]} [O_{\fp}' : O_{\fp}].
   \end{equation}
The result follows.
   \end{proof}

\begin{lemma} \label{lemma:mass-fiber}
  Let $O \subseteq \calO'$ be orders.
  \begin{enumalph}
  \item  Let $I'$ be a locally principal right $\calO'$-ideal.
  Then
  \[
    \mass\bigl(\overline{\rho}^{-1}([I'])\bigr)  = [\widehat \calO'^\times\colon \widehat \calO^\times] \mass([I']) .
  \]
  \item We have
  \[ \mass(\Cls\calO) = [\widehat \calO'^\times : \widehat \calO^\times] \mass(\Cls\calO'). \]
  \item We have
  \[
    \mass(\Cls^{[R]} \calO')   = [\widehat \calO'^\times:\widehat \calO^\times]^{-1}\sum_{[\frakb] \in \varphi^{-1}([R])} \mass(\Cls^{[\frakb]} O).
  \]
  \end{enumalph}
\end{lemma}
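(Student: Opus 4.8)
The plan is to prove part (a) first, from which parts (b) and (c) follow by summing over classes. For (a), I would use the explicit description of the fiber $\overline{\rho}^{-1}([I'])$ developed just above the lemma statement: after reducing (by factoring the inclusion $\calO \subseteq \calO'$ into a chain differing at one prime at a time, or equivalently working prime-by-prime) we have the bijection
\[
  \overline{\rho}^{-1}([I']) \leftrightarrow \calOL(I')^\times \backslash\, \rho^{-1}(I'),
\]
where $\rho^{-1}(I')$ carries a transitive right action of $\widehat{\calO}'^\times$ (acting through the finitely many primes where $\calO$ and $\calO'$ differ) with stabilizer identified with $\widehat{\calO}^\times$ acting suitably. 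Concretely, fixing a representative $I_0 \in \rho^{-1}(I')$ with $\calOL(I_0) = \calOL(I') = \calO'$, each $I \in \rho^{-1}(I')$ is $I_0\langle \widehat{\mu}\rangle$ for $\widehat{\mu} \in \widehat{\calO}'^\times$, well-defined modulo $\widehat{\calO}^\times$; and $[I] = [I_0\langle\widehat{\mu}\rangle]$ in $\Cls\calO$ depends only on the class of $\widehat{\mu}$ in $\calOL(I')^\times \backslash \widehat{\calO}'^\times / \widehat{\calO}^\times$, where $\calOL(I')^\times$ acts on the left (by left multiplication on the ideal, which permutes the fiber). The mass-weighting factor for the class of $I_0\langle\widehat{\mu}\rangle$ is $1/[\calOL(I_0\langle\widehat{\mu}\rangle)^\times : R^\times]$.

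The key computation is then an orbit–stabilizer bookkeeping: summing $1/[\calOL(I)^\times:R^\times]$ over representatives $[I]$ in $\overline{\rho}^{-1}([I'])$ equals $\sum_{\calOL(I')^\times\text{-orbits }\mathcal{T}} 1/[\calOL(I_{\mathcal{T}})^\times:R^\times]$, and for a single orbit of size $[\calOL(I')^\times : \calOL(I')^\times \cap (\text{stabilizer})]$ the contribution telescopes so that the total equals
\[
  \frac{1}{[\calOL(I')^\times : R^\times]} \cdot \#\bigl(\calOL(I')^\times \backslash \widehat{\calO}'^\times / \widehat{\calO}^\times \text{ counted with multiplicity}\bigr) = \frac{[\widehat{\calO}'^\times : \widehat{\calO}^\times]}{[\calOL(I')^\times : R^\times]},
\]
which is exactly $[\widehat{\calO}'^\times : \widehat{\calO}^\times]\,\mass([I'])$. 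Here I use that $\calOL(I)^\times \subseteq \calOL(I')^\times = \calO'^\times$ whenever $\rho(I) = I'$, and that $\#\rho^{-1}(I') = [\widehat{\calO}'^\times : \widehat{\calO}^\times]$ (the displayed local computation $\#\rho^{-1}(I'_\fp\text{-part}) = [\calO'^\times_\fp : \calO^\times_\fp]$, multiplied over the finitely many bad primes); the index $[\widehat{\calO}'^\times : \widehat{\calO}^\times]$ is finite and well-defined by Lemma \ref{lem:doesnotdepend}. The main subtlety—and the step I expect to require the most care—is correctly tracking the left action of $\calOL(I')^\times$ versus the right action of $\widehat{\calO}'^\times$ on $\rho^{-1}(I')$, i.e. verifying that the double-coset count with the left-stabilizer weights reassembles cleanly into the single factor $[\widehat{\calO}'^\times:\widehat{\calO}^\times]/[\calOL(I')^\times:R^\times]$ rather than something off by the orbit sizes; this is where one must check that "summing $1/[\text{stab}_{\calOL(I')^\times}(I):R^\times]$ over orbit representatives" genuinely equals "$1/[\calOL(I')^\times:R^\times]$ times the fiber cardinality."

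Given (a), part (b) follows immediately by summing over all $[I'] \in \Cls\calO'$ and using that $\Cls\calO = \bigsqcup_{[I']}\overline{\rho}^{-1}([I'])$ (Lemma \ref{lem:fiber-surjective} gives surjectivity; disjointness is automatic since $\overline{\rho}$ is a map):
\[
  \mass(\Cls\calO) = \sum_{[I']\in\Cls\calO'} \mass\bigl(\overline{\rho}^{-1}([I'])\bigr) = [\widehat{\calO}'^\times:\widehat{\calO}^\times]\sum_{[I']}\mass([I']) = [\widehat{\calO}'^\times:\widehat{\calO}^\times]\,\mass(\Cls\calO').
\]
For part (c), I would combine (a) with the commutative diagram \eqref{eqn:OOwithCl}: since $\overline{\rho}$ carries $\Cls^{[\frakb]}\calO$ into $\Cls^{\varphi([\frakb])}\calO'$, restricting the disjoint-union decomposition of $\Cls\calO$ to the fibers over $[R] \in \Cl_{G(O')} R$ gives
\[
  \Cls^{[R]}\calO' \text{ is covered by } \overline{\rho}\Bigl(\bigsqcup_{[\frakb]\in\varphi^{-1}([R])}\Cls^{[\frakb]}\calO\Bigr),
\]
and applying (a) fiber-by-fiber over $[I'] \in \Cls^{[R]}\calO'$ yields $\sum_{[\frakb]\in\varphi^{-1}([R])}\mass(\Cls^{[\frakb]}\calO) = [\widehat{\calO}'^\times:\widehat{\calO}^\times]\,\mass(\Cls^{[R]}\calO')$, which rearranges to the claimed identity. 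The only thing to check here is that the preimage under $\overline{\rho}$ of $\Cls^{[R]}\calO'$ is exactly $\bigsqcup_{[\frakb]\in\varphi^{-1}([R])}\Cls^{[\frakb]}\calO$, which is immediate from commutativity of \eqref{eqn:OOwithCl} and surjectivity of $\nrd$.
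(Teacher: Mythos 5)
Your proposal follows essentially the same route as the paper's proof: reduce to the case of a single prime (or argue prime-by-prime), use the transitive right action of $\calO_\fp'^\times$ on $\rho^{-1}(I')$ with stabilizer $\calO_\fp^\times$ so that $\#\rho^{-1}(I')=[\widehat{\calO}'^\times:\widehat{\calO}^\times]$, combine this with the bijection $\overline{\rho}^{-1}([I'])\leftrightarrow \calOL(I')^\times\backslash\rho^{-1}(I')$ and the telescoping $[\calOL(I)^\times:R^\times]\,[\calOL(I')^\times:\calOL(I)^\times]=[\calOL(I')^\times:R^\times]$ to get (a), then sum over $[I']$ for (b) and identify $\overline{\rho}^{-1}(\Cls^{[R]}\calO')=\bigsqcup_{[\frakb]\in\varphi^{-1}([R])}\Cls^{[\frakb]}\calO$ via the commutative diagram for (c), exactly as in the paper. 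One cosmetic slip: you cannot pick $I_0\in\rho^{-1}(I')$ with $\calOL(I_0)=\calOL(I')$ (the left order strictly shrinks at any prime where $\calO_\fp\subsetneq\calO'_\fp$), and $\calOL(I')$ need not equal $\calO'$; but neither identification enters your actual count, which only needs $\calOL(I)^\times\subseteq\calOL(I')^\times$ and the final formula in terms of $[\calOL(I')^\times:R^\times]=\mass([I'])^{-1}$, so the argument stands.
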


\begin{proof}
  It suffices to show the claim for the case where $\calO$ and $\calO'$ only differ at a single prime ideal $\fp$.
  For (a), we have 
  \[
    \begin{split}
      \mass\bigl(\overline \rho^{-1}([I'])\bigr)
      &= \sum_{[I] \in \overline{\rho}^{-1}([I'])}  \frac{1}{[\calOL(I)^\times \colon R^\times]} = \sum_{I \in \rho^{-1}(I')} \frac{1}{[\calOL(I)^\times:R^\times] [\calOL(I')^\times \colon \calOL(I)^\times]} \\
      &= \sum_{I \in \rho^{-1}(I')} \frac{1}{[\calOL(I')^\times\colon R^\times]} = \card{\rho^{-1}(I')} \mass([I']) = [O'^\times_\fp \colon \calO^\times_\fp ] \mass([I']).
    \end{split}
  \]
  Summing over the classes $[I']$ and surjectivity then gives (b).  
  
  For part (c), we apply (a) to get
  \[
    [\widehat \calO'^\times:\widehat \calO^\times]\mass(\Cls^{[R]} \calO')   = \mass\bigl(\overline \rho^{-1}( \Cls^{[R]} \calO')\bigr).
  \]
  But
  \[
    \overline \rho^{-1}( \Cls^{[R]} \calO' )
    = \{ [I] \in \Cls\calO \mid [\nrd(IO')]=[R] \in \Cl_{G(O')} R \}
    = \bigsqcup_{[\frakb] \in \varphi^{-1}([R])} \Cls^{[\frakb]} O. \qedhere
  \]
\end{proof}

With the basic comparison of masses in hand, we prove the following key theorem.

\begin{theorem}[Stably free mass] 
  \label{thm:mass}
Let $\calO$ be an $R$-order in $B$.  Then the following statements hold.  
  \begin{enumalph}
  \item\label{mass:indep} If $\calO'$ is locally isomorphic to $\calO$, then $\mass(\Cls^{[R]} O) = \mass(\Cls^{[R]} \calO')$.
  \item\label{mass:stablyfreemass} We have 
    \begin{equation} \label{eq:goal}
      \mass(\Cls^{[R]} O) = \frac{\mass(\Cls\calO)}{\card{\Cl_{G(\calO)} R}}.
    \end{equation}
  \item\label{mass:suborder}
    If $O' \supseteq \calO$ is a superorder, then
    \[
      \mass(\Cls^{[R]} O) = \frac{\card{\Cl_{G(O')} R}}{\card{\Cl_{G(\calO)} R}} [\widehat \calO'^\times: \widehat \calO^\times] \mass(\Cls^{[R]} \calO').
    \]
  \end{enumalph}
\end{theorem}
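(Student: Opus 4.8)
The plan is to reduce all three parts to the single statement that $\mass(\Cls^{[\frakb]} \calO)$ is independent of $[\frakb] \in \Cl_{G(\calO)} R$, i.e., condition \ref{prop:mass-equiv:indep} of Lemma~\ref{lem:mass-equiv}. Granting this, part~\ref{mass:indep} follows from the equivalence \ref{prop:mass-equiv:indep}${}\Leftrightarrow{}$\ref{prop:mass-equiv:genus} of that lemma, and part~\ref{mass:stablyfreemass} from \ref{prop:mass-equiv:indep}${}\Leftrightarrow{}$\ref{prop:mass-equiv:expr}. Part~\ref{mass:suborder} then drops out by combining part~\ref{mass:stablyfreemass} — applied to both $\calO$ and $\calO'$ — with Lemma~\ref{lemma:mass-fiber}(b): writing $\mass(\Cls^{[R]}\calO) = \mass(\Cls\calO)/\card{\Cl_{G(\calO)} R}$ and $\mass(\Cls^{[R]}\calO') = \mass(\Cls\calO')/\card{\Cl_{G(\calO')} R}$ and substituting $\mass(\Cls\calO) = [\widehat \calO'^\times : \widehat\calO^\times]\mass(\Cls\calO')$ gives the stated identity at once.

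To prove the constancy of the fiber masses I would use a Hecke-type correspondence. Fix an integral ideal $\mathfrak n$ of $R$ coprime to $\discrd\calO$, so that $\calO_\fp \cong \M_2(R_\fp)$ for all $\fp \mid \mathfrak n$, and set $\sigma(\mathfrak n) \colonequals \sum_{\frakm \mid \mathfrak n} \Nm(\frakm) > 0$. Two elementary local facts: each locally principal right fractional $\calO$-ideal $I$ has exactly $\sigma(\mathfrak n)$ right $\calO$-subideals $J \subseteq I$ with $\nrd(J) = \nrd(I)\mathfrak n$ — at $\fp \mid \mathfrak n$ these are the sublattices of $R_\fp^2$ of index $\fp^{v_\fp(\mathfrak n)}$, numbering $\sigma(\fp^{v_\fp(\mathfrak n)})$, and $J_\fp = I_\fp$ elsewhere — and any such $J$ satisfies $\mathfrak n I \subseteq J$, since $I_\fp / J_\fp$ has length $v_\fp(\mathfrak n)$ and so is annihilated by $\mathfrak n$. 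Let $\calP^{[\frakc]}$ be the set of isomorphism classes of flags $J \subseteq I$ of this type with $[\nrd(I)] = [\frakc]$, under the relation $(I, J) \sim (\alpha I, \alpha J)$ for $\alpha \in B^\times$, with $\mass(\calP^{[\frakc]}) \colonequals \sum 1/[\Aut(I,J) : R^\times]$, where $\Aut(I,J) = \{\alpha \in B^\times : \alpha I = I,\ \alpha J = J\}$, a subgroup of $\calOL(I)^\times$ containing $R^\times$ and hence finite modulo $R^\times$ since $B$ is definite.

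The crux is the map $\Phi\colon \calP^{[\frakc]} \to \calP^{[\frakc][\mathfrak n]}$ taking the class of $(I, J)$ to the class of $(J, \mathfrak n I)$: the inclusion $\mathfrak n I \subseteq J$ makes this a genuine flag with $\nrd$-class shifted by $[\mathfrak n]$; it is bijective, with inverse sending the class of $(A, B)$ to the class of $(\mathfrak n^{-1} B, A)$ (again using $\mathfrak n A \subseteq B$); and it preserves mass because $\Aut(J, \mathfrak n I) = \Aut(I, J)$ — since $\mathfrak n$ is central, $\alpha \mathfrak n I = \mathfrak n I$ if and only if $\alpha I = I$. On the other hand, fibering $\calP^{[\frakc]}$ over $\Cls^{[\frakc]}\calO$ by $(I, J) \mapsto [I]$ and counting $\calOL(I)^\times$-orbits on the $\sigma(\mathfrak n)$ subideals of a fixed $I$, the fiber over $[I]$ has mass $\sigma(\mathfrak n)/[\calOL(I)^\times : R^\times]$; hence $\mass(\calP^{[\frakc]}) = \sigma(\mathfrak n)\,\mass(\Cls^{[\frakc]}\calO)$. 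Combining the two computations, $\mass(\Cls^{[\frakc]}\calO) = \mass(\Cls^{[\frakc][\mathfrak n]}\calO)$ for every $[\frakc]$; and as $\mathfrak n$ ranges over integral ideals coprime to $\discrd\calO$ the classes $[\mathfrak n]$ exhaust $\Cl_{G(\calO)} R$ (by Chebotarev each class contains a prime not dividing $\discrd\calO$), so all fibers of $\nrd\colon \Cls\calO \to \Cl_{G(\calO)} R$ have the same mass.

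The step I expect to require the most care is packaging the correspondence so that it is an honest bijection rather than merely the $\sigma(\mathfrak n)$-valued Brandt-matrix correspondence ``pass to a subideal of norm $\mathfrak n$'': the point is to carry it out on flags $J \subseteq I$, where the transported automorphism group is forced to agree by the centrality of $\mathfrak n$, and to verify the clean inclusion $\mathfrak n I \subseteq J$ underlying both the invertibility of $\Phi$ and the orbit-counting identity $\mass(\calP^{[\frakc]}) = \sigma(\mathfrak n)\mass(\Cls^{[\frakc]}\calO)$. Everything else — the two local counts and the comparison of masses — is then mechanical, and notably the whole argument avoids any Tamagawa-measure computation.
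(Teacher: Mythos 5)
Your proposal is correct, but it reaches the key fact by a genuinely different route than the paper. The paper proves part~(a) first: for locally isomorphic $\calO,\calO'$ it passes to the intersection $\calO_0=\calO\cap\calO'$ and applies Lemma~\ref{lemma:mass-fiber}(c) together with Lemma~\ref{lem:doesnotdepend} (the index $[\widehat\calO^\times:\widehat\calO_0^\times]$ depends only on $[\calO:\calO_0]$ and the genus) to see that $\mass(\Cls^{[R]}\calO)$ and $\mass(\Cls^{[R]}\calO')$ are given by one and the same expression in terms of $\calO_0$; the constancy of the fiber masses, hence part~(b), then comes out of Lemma~\ref{lem:mass-equiv} in the direction (iii)$\Rightarrow$(i), and part~(c) follows from Lemma~\ref{lemma:mass-fiber}(b) exactly as in your reduction. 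You instead prove constancy of $\mass(\Cls^{[\frakb]}\calO)$ within the single order $\calO$ directly, by a Hecke/Brandt-type correspondence on flags $J\subseteq I$ with $\nrd(J)=\nrd(I)\mathfrak{n}$ for $\mathfrak{n}$ coprime to $\discrd\calO$, promoted to an honest mass-preserving bijection by the central twist $(I,J)\mapsto(J,\mathfrak{n}I)$, and then read off (a) from Lemma~\ref{lem:mass-equiv} in the opposite direction (ii)$\Rightarrow$(iii); this is sound, including the automorphism-group identification $\Aut(J,\mathfrak{n}I)=\Aut(I,J)$ and the orbit count giving $\mass(\calP^{[\frakc]})=\sigma(\mathfrak{n})\mass(\Cls^{[\frakc]}\calO)$. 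Your route needs two inputs the paper's proof avoids: the local count of subideals at primes with $\calO_\fp\simeq \M_2(R_\fp)$ (where the inclusion $\mathfrak{n}I\subseteq J$ is better justified via the Morita picture or $\calO_\fp$-composition factors, since the $R_\fp$-length of $I_\fp/J_\fp$ is $2v_\fp(\mathfrak{n})$, not $v_\fp(\mathfrak{n})$), and the fact that every class of $\Cl_{G(\calO)}R$ is represented by an integral ideal coprime to $\discrd\calO$ (Chebotarev/Dirichlet applied to a ray class group surjecting onto $\Cl_{G(\calO)}R$, or an elementary approximation argument). What your approach buys is a self-contained proof that all fibers of $\nrd$ have equal mass which never leaves $\calO$ or compares unit indices across the genus---essentially the classical fact that mass-weighted Brandt column sums are constant---whereas the paper's intersection trick is shorter and uses only lemmas already at hand; both, as you note, avoid any Tamagawa-measure computation.
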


\begin{proof}
  We begin with \ref*{mass:indep}.  Let $\calO_0 \colonequals \calO \cap \calO'$.  With respect to the inclusion $\calO_0 \subseteq \calO$, Lemma \ref{lemma:mass-fiber}(c) expresses $\mass(\Cls^{[R]} O)$ in terms of masses on $\calO_0$ and the index $[\widehat\calO^\times : \widehat\calO_0^\times]$.  Since $\calO$ is locally isomorphic to $\calO'$, they have equal reduced discriminants $\discrd(\calO)=\discrd(\calO')$ so 
  \[ \discrd(\calO_0)=[\calO:\calO_0]_R \discrd(\calO)=[\calO':\calO_0]_R \discrd(\calO') \]
  so $[\calO:\calO_0]=[\calO':\calO_0]$.  Thus, by Lemma \ref{lem:doesnotdepend} we get the same expression for $\mass(\Cls^{[R]} O')$.
   
Part  \ref*{mass:stablyfreemass} then follows from Lemma~\ref{lem:mass-equiv} together with part \ref*{mass:indep}, and then part \ref*{mass:suborder} follows from Lemma~\ref{lemma:mass-fiber}(b) and part \ref*{mass:stablyfreemass}.
 \end{proof}
 
 \begin{corollary} \label{cor:equamassHermite}
 $\calO$ is Hermite if and only if 
\begin{equation}
\mass(\Cls\calO) = \frac{\card{\Cl_{G(\calO)} R}}{[O^\times:R^\times]}.
\end{equation}
\end{corollary}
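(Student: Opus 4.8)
The plan is to deduce this immediately by chaining together the mass characterization of the Hermite property with the stably free mass formula established just above. Concretely, Proposition~\ref{prop:sff} (the equivalence \ref*{sff:module}${}\Leftrightarrow{}$\ref*{sff:mass}) tells us that $\calO$ is Hermite precisely when
\[
  \mass(\Cls^{[R]} \calO) = \frac{1}{[\calO^\times : R^\times]},
\]
so it suffices to rewrite the left-hand side. For this I would invoke Theorem~\ref{thm:mass}\ref{mass:stablyfreemass}, which gives
\[
  \mass(\Cls^{[R]} \calO) = \frac{\mass(\Cls\calO)}{\card{\Cl_{G(\calO)} R}}
\]
unconditionally (no Hermite hypothesis needed).

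Substituting the second identity into the first, $\calO$ is Hermite if and only if
\[
  \frac{\mass(\Cls\calO)}{\card{\Cl_{G(\calO)} R}} = \frac{1}{[\calO^\times : R^\times]},
\]
and clearing the (finite, nonzero) denominator $\card{\Cl_{G(\calO)} R}$ yields exactly the asserted equality $\mass(\Cls\calO) = \card{\Cl_{G(\calO)} R}/[\calO^\times : R^\times]$. This is a genuine two-line argument, so there is no real obstacle; the only thing worth remarking is that all the substance has already been absorbed into Theorem~\ref{thm:mass}, whose proof in turn rests on the local-isomorphism invariance of $\mass(\Cls^{[R]} \calO)$ via Lemma~\ref{lem:mass-equiv} and Lemma~\ref{lemma:mass-fiber}. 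One may optionally note that, since $\Cls\calO$ is finite and all the indices involved are finite, both sides are well-defined rational numbers, so the equivalence is literally an equality of rationals; and that the left-right symmetry asserted implicitly is already contained in Proposition~\ref{prop:sff}.

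If one wanted a slightly more self-contained presentation, an alternative is to bypass Theorem~\ref{thm:mass} and argue directly: $\card{\Cls^{[R]}\calO}=1$ iff the single class contributes mass $1/[\calO^\times:R^\times]$ iff (using that the fiber masses are all equal, Lemma~\ref{lem:mass-equiv}\ref*{prop:mass-equiv:indep}, together with surjectivity of $\nrd$) the total mass $\mass(\Cls\calO)$ equals $\card{\Cl_{G(\calO)}R}$ times that common fiber mass. But the cleanest route is simply to cite Proposition~\ref{prop:sff} and Theorem~\ref{thm:mass}\ref{mass:stablyfreemass} and combine.
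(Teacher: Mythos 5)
Your argument is correct and is exactly the paper's proof: the paper also simply combines Proposition~\ref{prop:sff}\ref{sff:mass} with Theorem~\ref{thm:mass}\ref{mass:stablyfreemass} and clears the denominator. Nothing further is needed.
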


\begin{proof}
Combine Proposition \ref{prop:sff}(iv) which says
\[
  \mass(\Cls^{[R]} O) = \frac{1}{[O^\times:R^\times]}; \]
with Theorem \ref{thm:mass}(b).
\end{proof}

 \subsection*{Class groups}

Retracing a standard argument for ray class groups, we determine the kernel of the epimorphism $\Cl_{G(\calO)} R \to \Cl R$ and compare $\Cl_{G(\calO)} R$ and $\Cl_{G(O')} R$ for orders $O \subseteq \calO'$.  We use these in the bounds and algorithms in the next section.  

\begin{lemma} \label{lem:hom-seq}
  Let
  \[
    \begin{tikzcd}[column sep=small]
      A \ar[r, "f"] & B \ar[r,"g"] & C
    \end{tikzcd}
  \]
  be a pair of homomorphisms of abelian groups.
  Then there is an exact sequence
  \[
    \begin{tikzcd}[column sep=small]
      \ker(f) \ar[r] & \ker(g \circ f) \ar[r] & \ker(g) \ar[r] & \coker(f) \ar[r] & \coker(g \circ f) \ar[r] & \coker(g) \ar[r] & 0 
    \end{tikzcd}
  \]
\end{lemma}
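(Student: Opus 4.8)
The plan is to obtain this as an instance of the snake lemma. The first step is to package the two maps into the commutative diagram of abelian groups
\[
\begin{tikzcd}
0 \arrow[r] & A \arrow[r] \arrow[d, "f"] & A \oplus B \arrow[r] \arrow[d, "\mu"] & B \arrow[r] \arrow[d, "g"] & 0 \\
0 \arrow[r] & B \arrow[r] & B \oplus C \arrow[r] & C \arrow[r] & 0
\end{tikzcd}
\]
in which the unlabeled horizontal maps are the canonical inclusion of, and projection onto, the first coordinate, and $\mu(a,b) \colonequals (f(a)+b,\,g(b))$. Both rows are split short exact sequences, and commutativity of the two squares is immediate from the definition of $\mu$: the left square reads $\mu(a,0) = (f(a),0)$, and the right square reads that the $C$-coordinate of $\mu(a,b)$ is $g(b)$.

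The second step is to identify the kernel and cokernel of the middle map with those of $g \circ f$. Unwinding definitions, $\ker\mu = \{\,(a,-f(a)) : a \in \ker(g\circ f)\,\}$, so projection onto the first coordinate is an isomorphism $\ker\mu \xrightarrow{\sim} \ker(g\circ f)$; and $(b,c) \in \im\mu$ if and only if $c - g(b) \in \im(g\circ f)$, so $(b,c) \mapsto c - g(b)$ descends to an isomorphism $\coker\mu \xrightarrow{\sim} \coker(g\circ f)$. Feeding the diagram above into the snake lemma and transporting along these two isomorphisms then produces the exact sequence
\[
0 \to \ker f \to \ker(g\circ f) \to \ker g \to \coker f \to \coker(g\circ f) \to \coker g \to 0,
\]
which contains the asserted sequence (and records in addition that $\ker f \to \ker(g\circ f)$ is injective, as it must be, being an inclusion of subgroups of $A$).

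The main thing to watch is the bookkeeping: under the two identifications of the previous paragraph, one should check that the maps delivered by the snake lemma are the natural ones, namely the inclusion $\ker f \hookrightarrow \ker(g\circ f)$, the map $\ker(g\circ f) \to \ker g$ induced by $f$, the connecting map $\ker g \to \coker f$ sending an element of $\ker g \subseteq B$ to its class in $B/\im f$, and the maps $\coker f \to \coker(g\circ f) \to \coker g$ induced by $g$ and by the identity of $C$---each correct up to a possible overall sign that is harmless for exactness and can be normalized by adjusting the signs in the definition of $\mu$. If a self-contained argument is preferred, these six maps can instead be written down directly and exactness verified by a routine diagram chase at each of the five interior terms; this is longer but entirely mechanical, and I would fall back on it only to avoid invoking the snake lemma.
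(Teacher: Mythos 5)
Your argument is correct, and it does prove the lemma, but via a different diagram than the paper. The paper feeds the snake lemma the diagram with rows $A \xrightarrow{f} B \to B/f(A) \to 0$ and $0 \to C \xrightarrow{\id} C \to 0$ and vertical maps $g\circ f$, $g$, and $B/f(A)\to 0$; there the three kernels and cokernels are literally $\ker(g\circ f)$, $\ker(g)$, $\coker(f)$ and $\coker(g\circ f)$, $\coker(g)$, $0$, so the six-term snake sequence is the claimed one with the natural maps, no identifications required (only the trivial exactness at $\ker(g\circ f)$, i.e.\ $\ker(f|_{\ker(g\circ f)})=\ker f$, is left implicit). Your mapping-cone-style diagram with split short exact rows $0\to A\to A\oplus B\to B\to 0$ and $0\to B\to B\oplus C\to C\to 0$ and middle map $\mu(a,b)=(f(a)+b,g(b))$ also works: your identifications $\ker\mu\simeq\ker(g\circ f)$ and $\coker\mu\simeq\coker(g\circ f)$ are right, and since both rows are genuinely short exact, the snake lemma even hands you exactness at $\ker(g\circ f)$ and the injectivity of $\ker f\to\ker(g\circ f)$ for free. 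The price is exactly the bookkeeping you flag: one does need to check (as you sketch) that the transported maps are the natural inclusion, the map induced by $f$, reduction mod $\im f$, and the maps induced by $g$ and $\id_C$, up to harmless signs, because the lemma is used later (Propositions \ref{prop:cgrp-sequence} and \ref{prop:cgrp-sequence-suborder}) with those specific maps. One small slip in wording: for your rows to be exact, the unlabeled surjections must be the projections onto the \emph{second} coordinate (killing the first), not ``onto the first coordinate''; your commutativity checks make the intended maps clear, so this is cosmetic. Overall, the paper's choice of diagram is the shorter route to the statement with the correct maps; yours is a perfectly valid alternative that trades two identification computations for a cleaner snake input.
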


\begin{proof}
  This is a consequence of the snake lemma applied to
  \[
    \begin{gathered}[b]
      \begin{tikzcd}
        & A \ar[r,"f"] \ar[d, "g \circ f"] & B \ar[d, "g"] \ar[r] &  B/f(A) \ar[d] \ar[r] & 0 \\
        0 \ar[r] & C \ar[r, "\id"] & C \ar[r] & 0.
      \end{tikzcd}\\[-\dp\strutbox]
    \end{gathered}\qedhere
  \]
 \end{proof}

By Hensel's Lemma, $1 + \fp^l R_\fp \subseteq R_\fp^{\times 2}$ for $l > \val_\fp(4)$.
Therefore, $1 + \fp^l R_\fp \subseteq \nrd(O_\fp^\times)$.
We make use of this in applying weak approximation in the following two proofs.
 
 \begin{proposition} \label{prop:cgrp-sequence}
   There is an exact sequence
   \[
     \begin{tikzcd}[column sep=small]
       1 \ar[r] & R^\times \cap F^\times_{S,\calO} \ar[r] & R^\times \ar[r] & F^\times_S / F^\times_{S,\calO} \ar[r] & \Cl_{G(\calO)} R \ar[r] & \Cl R \ar[r] & 1.
     \end{tikzcd}
   \]
and
   \[
     F^\times_S / F^\times_{S,\calO} \simeq \{\pm 1\}^n \times \widehat R^\times\!/\!\nrd(\widehat \calO^\times).
   \]
 \end{proposition}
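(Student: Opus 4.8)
The plan is to obtain the six-term sequence by splicing the tautological short exact sequence
\[
  1 \to \PIdl_S R/\PIdl_{S,\calO} R \to \Cl_{G(\calO)} R \to \Cl R \to 1
\]
with an explicit description of $\PIdl_S R/\PIdl_{S,\calO} R$ as a quotient of $F^\times_S/F^\times_{S,\calO}$, and then to deduce the displayed isomorphism from weak approximation, using the openness of $\nrd(\calO_\fp^\times)$ in $R_\fp^\times$ recorded just above. The middle portion can be organized through Lemma~\ref{lem:hom-seq}, but the splicing is short enough to carry out directly.

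First the bookkeeping. Since $\nrd(\calO_\fp^\times)\subseteq R_\fp^\times$ we have $F^\times_{S,\calO}\subseteq F^\times_S$, hence $\PIdl_{S,\calO} R\subseteq \PIdl_S R\subseteq \Idl_S R$; combined with $\Idl_S R/\PIdl_S R\simeq \Cl R$ from \eqref{l:cliso:cl}, this gives the canonical surjection $\Cl_{G(\calO)} R=\Idl_S R/\PIdl_{S,\calO} R\twoheadrightarrow \Cl R$ with kernel $\PIdl_S R/\PIdl_{S,\calO} R$, which settles exactness at $\Cl_{G(\calO)}R$ and at $\Cl R$. Next consider the homomorphism $\psi\colon F^\times_S\to \Cl_{G(\calO)} R$, $a\mapsto [aR]$. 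Since $aR$ is a principal fractional ideal, $\psi$ takes values in $\ker(\Cl_{G(\calO)}R\to\Cl R)$, and conversely every element of that kernel is $[bR]$ for some $b\in F^\times_S$, so $\im\psi=\ker(\Cl_{G(\calO)}R\to\Cl R)$. Unwinding definitions, $\ker\psi=\{a\in F^\times_S: aR\in \PIdl_{S,\calO}R\}=R^\times F^\times_{S,\calO}$ --- indeed if $aR=bR$ with $b\in F^\times_{S,\calO}$ then $a=ub$ with $u\in R^\times$ --- so $\psi$ factors as $\overline\psi\colon F^\times_S/F^\times_{S,\calO}\to \Cl_{G(\calO)} R$ with $\ker\overline\psi=R^\times F^\times_{S,\calO}/F^\times_{S,\calO}$. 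Finally, the natural map $R^\times\to F^\times_S/F^\times_{S,\calO}$ has image $R^\times F^\times_{S,\calO}/F^\times_{S,\calO}=\ker\overline\psi$ (exactness at $F^\times_S/F^\times_{S,\calO}$) and kernel $R^\times\cap F^\times_{S,\calO}$ (exactness at $R^\times$, the injectivity of the first map being tautological). Stringing these observations together gives the asserted exact sequence.

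For the isomorphism, I would define $\phi\colon F^\times_S\to \{\pm1\}^n\times \widehat R^\times/\nrd(\widehat\calO^\times)$ by sending $a$ to its tuple of signs at the $n$ real places of $F$ together with its class in $\widehat R^\times/\nrd(\widehat\calO^\times)$; this makes sense because $a\in R_\fp^\times$ for $\fp\in S$, while $\nrd(\calO_\fp^\times)=R_\fp^\times$ for $\fp\notin S$, so only the components at $\fp\in S$ of the target are nontrivial. Straight from the definition of $F^\times_{S,\calO}$ one reads off $\ker\phi=F^\times_{S,\calO}$. For surjectivity, the remark preceding the proposition gives $1+\fp^l R_\fp\subseteq \nrd(\calO_\fp^\times)$ for $l>\val_\fp(4)$, so $\nrd(\calO_\fp^\times)$ is open in $R_\fp^\times$; weak approximation then produces, for any prescribed sign vector and any prescribed approximations modulo sufficiently high powers of $\fp$ at the finitely many $\fp\in S$, an element $a\in F^\times$ realizing them, and such $a$ automatically lies in $F^\times_S$. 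Hence $\phi$ descends to the desired isomorphism $F^\times_S/F^\times_{S,\calO}\xrightarrow{\ \sim\ }\{\pm1\}^n\times \widehat R^\times/\nrd(\widehat\calO^\times)$.

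The one genuinely delicate step is the surjectivity of $\phi$: it relies on $\nrd(\calO_\fp^\times)$ being open in $R_\fp^\times$ --- exactly the content of the Hensel remark --- so that controlling $a$ to finite $\fp$-adic precision suffices for weak approximation. Everything else is careful tracking of the nested groups $F^\times_{S,\calO}\subseteq R^\times F^\times_{S,\calO}\subseteq F^\times_S$ and unwinding the definitions of $\Cl_{G(\calO)} R$, $\PIdl_{S,\calO} R$, and $\PIdl_S R$.
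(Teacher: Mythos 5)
Your proof is correct and follows essentially the same route as the paper: the six-term sequence is exactly what Lemma~\ref{lem:hom-seq} applied to $F^\times_{S,\calO}\to F^\times_S\to \Idl_S R$ produces (you simply verify the splicing by hand), and the displayed isomorphism is obtained by the same sign-plus-local-norm-class map, with surjectivity coming from weak approximation together with the Hensel openness remark. The only cosmetic difference is that the paper records the local class by writing $a=b/c$ with $b,c$ coprime to $S$, whereas you use $a$ itself at the primes of $S$; these agree under the identification $\widehat R^\times\!/\!\nrd(\widehat\calO^\times)\simeq\prod_{\fp\in S}R_\fp^\times/\nrd(\calO_\fp^\times)$.
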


 \begin{proof}
   Consider the homomorphisms
   \[
     \begin{tikzcd}
       F^\times_{S,\calO} \ar[r, "f"] & F^\times_S \ar[r, "g"] & \Idl _S R.
     \end{tikzcd}
   \]
   Then $\ker(f) = \mathbf 1$, $\ker(g) = R^\times$, and $\ker(g \circ f ) = R^\times \cap F^\times_{S,\calO}$.
   For the cokernels we have $\coker(g) = \Cl R$, $\coker(g \circ f) = \Cl_{G(\calO)} R$, and $\coker(f) = F^\times_S / F^\times_{S,\calO}$.
   Thus, the previous lemma yields the claimed exact sequence.

   It remains to show $F^\times_S / F^\times_{S,\calO} \simeq \{\pm 1\}^n \times \prod_{\fp \in S} R_\fp^\times\!/\!\nrd(O_\fp^\times)$.
   Let $a \in F^\times_S$.
   Then there exist $b$,~$c \in R \setminus \{0\}$ such that $a=b/c$ and $\val_\fp(b)=\val_\fp(c)=0$ for all $\fp \in S$.
   Observing that the following  is independent of the representatives $b$, $c$, we define
   \begin{equation}
   \begin{aligned}
     f\colon F^\times_S &\to \{\pm 1\}^n \times \prod_{\fp \in S} R_\fp^\times \!/\!\nrd(O_\fp^\times)\\
      a &\mapsto (\sgn a, bc^{-1})
      \end{aligned}
      \end{equation}
   By weak approximation, $f$ is surjective.
   Clearly $\ker f = F^\times_{S,\calO}$.
 \end{proof}

 \begin{proposition} \label{prop:cgrp-sequence-suborder}
   Let $O \subseteq \calO'$ be orders.
   Then there is an exact sequence
   \[
     \begin{tikzcd}[column sep=small]
       \mathbf 1 \ar[r] & R^\times \cap F^\times_{S,\calO} \ar[r] & R^\times  \cap F^\times_{S,O'} \ar[r] & F^\times_{S,O'} / F^\times_{S,\calO} \ar[r] & \Cl_{G(\calO)} R \ar[r] & \Cl_{G(O')} R \ar[r] & \mathbf 1,
     \end{tikzcd}
   \]
   Moreover,
   \[
     F^\times_{S,O'} / F^\times_{S,\calO} \simeq  \nrd(\widehat \calO'^\times)/\!\nrd(\widehat \calO^\times).
   \]
 \end{proposition}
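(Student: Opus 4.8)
The plan is to mirror the proof of Proposition~\ref{prop:cgrp-sequence}, applying Lemma~\ref{lem:hom-seq} to a suitable pair of homomorphisms, and then to identify the cokernel $F^\times_{S,O'}/F^\times_{S,\calO}$ idelically. First I would apply Lemma~\ref{lem:hom-seq} to the homomorphisms
\[
  \begin{tikzcd}
    F^\times_{S,\calO} \ar[r, "f"] & F^\times_{S,O'} \ar[r, "g"] & \Idl_S R,
  \end{tikzcd}
\]
where $f$ is the inclusion (note $F^\times_{S,\calO} \subseteq F^\times_{S,O'}$ since $\calO \subseteq \calO'$ forces $\nrd(\calO_\fp^\times) \subseteq \nrd(O'^\times_\fp)$ at each $\fp$) and $g$ is the principal-ideal map $a \mapsto aR$. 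Then $\ker f = \mathbf 1$, $\ker g = R^\times \cap F^\times_{S,O'}$, and $\ker(g\circ f) = R^\times \cap F^\times_{S,\calO}$. For the cokernels, $\coker g = \Idl_S R / \PIdl_{S,O'} R = \Cl_{G(O')} R$, $\coker(g\circ f) = \Idl_S R / \PIdl_{S,\calO} R = \Cl_{G(\calO)} R$, and $\coker f = F^\times_{S,O'}/F^\times_{S,\calO}$. Plugging these into the six-term exact sequence of Lemma~\ref{lem:hom-seq} gives the displayed sequence exactly.

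For the final isomorphism, I would define a homomorphism $F^\times_{S,O'} \to \nrd(\widehat\calO'^\times)/\nrd(\widehat\calO^\times)$ by sending $a$ to the class of $\widehat a \colonequals (a)_\fp$. This makes sense: since $a \in F^\times_{S,O'}$, for $\fp \in S$ we have $a \in \nrd(O'^\times_\fp)$, while for $\fp \notin S$ both $\calO_\fp$ and $O'_\fp$ are maximal, so $\nrd(\calO_\fp^\times) = \nrd(O'^\times_\fp) = R_\fp^\times$ and $a \in R_\fp^\times$ away from finitely many primes (and at the remaining finitely many $\fp \notin S$ where $v_\fp(a) = 0$ we still have $a \in R_\fp^\times = \nrd(\calO_\fp^\times)$; at the finitely many $\fp \notin S$ with $v_\fp(a) \ne 0$ there is nothing in $\widehat\calO'^\times$ to represent it — so one must be slightly careful and instead use the standard trick, writing $a = b/c$ with $b,c \in R\setminus\{0\}$ and $v_\fp(b) = v_\fp(c) = 0$ for $\fp \in S$, then mapping $a \mapsto bc^{-1}$ viewed in $\prod_{\fp\in S} \nrd(O'^\times_\fp)/\nrd(\calO_\fp^\times)$, exactly as in the previous proposition). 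Surjectivity follows from weak approximation: given local elements $\mu_\fp \in \nrd(O'^\times_\fp)$ for $\fp \in S$, using that $1 + \fp^l R_\fp \subseteq \nrd(\calO_\fp^\times)$ for $l > v_\fp(4)$ (the remark preceding Proposition~\ref{prop:cgrp-sequence}) one can approximate by a global element, clearing denominators at primes in $S$. The kernel is visibly $\{\, a \in F^\times_{S,O'} : a \in \nrd(\calO_\fp^\times) \text{ for all } \fp \in S\,\} = F^\times_{S,\calO}$, since at $\fp \notin S$ the condition is automatic. Hence $F^\times_{S,O'}/F^\times_{S,\calO} \simeq \prod_{\fp \in S} \nrd(O'^\times_\fp)/\nrd(\calO_\fp^\times) = \nrd(\widehat\calO'^\times)/\nrd(\widehat\calO^\times)$, the last equality because the factors outside $S$ are trivial.

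The main obstacle is the bookkeeping around primes $\fp \notin S$ in the cokernel computation: one has to argue carefully that $\nrd(\widehat\calO'^\times)/\nrd(\widehat\calO^\times)$ really is supported on $S$ (using maximality of $\calO_\fp, O'_\fp$ there and the fact that reduced norms of unit groups of maximal local orders are full unit groups), and one must handle the choice of representative $a = b/c$ so that the map is well-defined, just as in Proposition~\ref{prop:cgrp-sequence}. Once that is in place, everything else is a direct transcription of that earlier argument, with $F^\times_S$ replaced by $F^\times_{S,O'}$ throughout.
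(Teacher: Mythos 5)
Your proposal is correct and follows the paper's own argument: the paper likewise applies Lemma~\ref{lem:hom-seq} to the pair $F^\times_{S,\calO} \to F^\times_{S,O'} \to \Idl_S R$ and then uses weak approximation (with the same $a=b/c$ representative device as in Proposition~\ref{prop:cgrp-sequence}) to obtain the surjection onto $\prod_{\fp\in S}\nrd(\calO'^\times_\fp)/\nrd(\calO_\fp^\times)$ with kernel $F^\times_{S,\calO}$. The bookkeeping you flag (triviality of the factors at $\fp\notin S$ via maximality of $\calO_\fp$, hence of $\calO'_\fp$) is exactly the point the paper leaves implicit, so your write-up is a faithful, slightly more detailed version of the same proof.
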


 \begin{proof}
   The existence of the exact sequence follows again from Lemma~\ref{lem:hom-seq}, applied to
   \[
     \begin{tikzcd}
       F^\times_{S,\calO} \ar[r, "f"] & F^\times_{S,O'} \ar[r, "g"] & \Idl _S R.
     \end{tikzcd}
   \]
   Weak approximation again implies that the homomorphism $F_{S,O'}^\times \to \prod_{\fp \in S} \nrd(O'^\times_\fp) /\! \nrd(O^\times_\fp)$ is surjective, and hence gives rise to the claimed isomorphism.
 \end{proof}

 \begin{proposition} \label{prop:clgrowth}
   We have
   \[
     \frac{\card{\Cl_{G(\calO)} R}}{\card{\Cl R}} = \frac{2^n [\widehat R^\times: \nrd(\widehat \calO^\times)]}{[R^\times: R^\times \cap F^\times_{S,\calO}]} =  [R^\times_{>0} \cap \nrd(\widehat \calO^\times) : R^{\times 2}] [\widehat R^\times:\nrd(\widehat \calO^\times)].
   \]
 \end{proposition}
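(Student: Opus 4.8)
The plan is to extract the ratio $\card{\Cl_{G(\calO)} R}/\card{\Cl R}$ directly from the exact sequence of Proposition~\ref{prop:cgrp-sequence}, and then reconcile the two displayed expressions by invoking Dirichlet's unit theorem for the totally real field $F$.

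First I would split the exact sequence of Proposition~\ref{prop:cgrp-sequence} at the image of $R^\times$ in $F^\times_S/F^\times_{S,\calO}$, obtaining the four-term exact sequence of \emph{finite} abelian groups
\[
  1 \to R^\times/(R^\times \cap F^\times_{S,\calO}) \to F^\times_S/F^\times_{S,\calO} \to \Cl_{G(\calO)} R \to \Cl R \to 1,
\]
where finiteness of the leftmost term is forced by the finiteness of $F^\times_S/F^\times_{S,\calO}$ (itself finite by the isomorphism in Proposition~\ref{prop:cgrp-sequence}, since $\widehat R^\times/\nrd(\widehat\calO^\times) \simeq \prod_{\fp\in S} R_\fp^\times/\nrd(O_\fp^\times)$ is a finite product of finite groups). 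Multiplicativity of cardinalities along a four-term exact sequence of finite groups then gives
\[
  \frac{\card{\Cl_{G(\calO)} R}}{\card{\Cl R}} = \frac{\card{F^\times_S/F^\times_{S,\calO}}}{[R^\times : R^\times \cap F^\times_{S,\calO}]},
\]
and substituting $\card{F^\times_S/F^\times_{S,\calO}} = 2^n\,[\widehat R^\times : \nrd(\widehat\calO^\times)]$, again from Proposition~\ref{prop:cgrp-sequence}, yields the first claimed equality.

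For the second equality I would first identify $R^\times \cap F^\times_{S,\calO}$ with $R^\times_{>0} \cap \nrd(\widehat\calO^\times)$: a global unit $u$ automatically satisfies $u \in R_\fp^\times = \nrd(\calO_\fp^\times)$ for every prime $\fp \notin S$, since $\calO_\fp$ is maximal there (the fact recalled just before Definition~\ref{def:classGO}), so the condition $a \in \nrd(\calO_\fp^\times)$ for all $\fp\in S$ defining $F^\times_{S,\calO}$ is exactly membership of the diagonal image in $\nrd(\widehat\calO^\times)$. Moreover, for $u \in R^\times$ the square $u^2 = \nrd(u)$ is totally positive and lies in $\nrd(R_\fp^\times)\subseteq\nrd(\calO_\fp^\times)$ for all $\fp$, so $R^{\times 2} \subseteq R^\times \cap F^\times_{S,\calO} \subseteq R^\times$. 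Now $F$ is totally real, so by Dirichlet's unit theorem $R^\times \simeq \{\pm 1\}\times \ZZ^{n-1}$, whence $[R^\times : R^{\times 2}] = 2^n$; multiplicativity of the index along the chain above gives
\[
  2^n = [R^\times : R^\times \cap F^\times_{S,\calO}]\cdot[R^\times_{>0}\cap\nrd(\widehat\calO^\times) : R^{\times 2}].
\]
Dividing this relation by $[R^\times : R^\times\cap F^\times_{S,\calO}]$, multiplying by $[\widehat R^\times:\nrd(\widehat\calO^\times)]$, and comparing with the first equality produces the second equality.

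The whole argument is essentially bookkeeping around Proposition~\ref{prop:cgrp-sequence}; the only extra inputs are the multiplicativity of orders along exact sequences of finite groups and the structure of $R^\times$ for totally real $F$. I expect the step needing the most care to be the identification $R^\times \cap F^\times_{S,\calO} = R^\times_{>0}\cap\nrd(\widehat\calO^\times)$ together with the inclusion $R^{\times 2}\subseteq R^\times\cap F^\times_{S,\calO}$: this is precisely where the hypothesis that $S$ contains all non-maximal primes is used (so that the local conditions off $S$ are vacuous), and where one must be careful that the intersection is taken under the diagonal embedding $F^\times \hookrightarrow \widehat F^\times$.
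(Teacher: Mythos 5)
Your proposal is correct and follows essentially the same route as the paper: the first equality is extracted from the exact sequence and the isomorphism of Proposition~\ref{prop:cgrp-sequence}, and the second from the chain $R^{\times 2} \subseteq R^\times \cap F^\times_{S,\calO} \subseteq R^\times$ together with $[R^\times : R^{\times 2}] = 2^n$ and the identification $R^\times \cap F^\times_{S,\calO} = R^\times_{>0} \cap \nrd(\widehat\calO^\times)$. You merely spell out details the paper leaves implicit (splitting the five-term sequence into a sequence of finite groups, and verifying the identification of unit groups), all of which are accurate.
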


 \begin{proof}
   The first equality follows from Proposition~\ref{prop:cgrp-sequence}.  For the second, note that
   \[
     R^{\times 2} \subseteq R^\times \cap F^\times_{S,\calO} \subseteq R^\times,
   \]
   and, since $F$ is totally real of degree $n$, we have $[R^\times: R^{\times 2}] = 2^n$.
   Thus,
   \[
     \frac{2^n}{[R^\times : R^\times \cap F^\times_{S,\calO}]} = [R^\times \cap F^\times_{S,\calO} : R^{\times 2}].
   \]
   Moreover, $R^\times \cap F^\times_{S,\calO} = R^\times_{>0} \cap \nrd(\widehat \calO^\times)$.
   Hence, the second equality is shown.
 \end{proof}

 \begin{corollary} \label{cor:indexbound}
   Let $O \subseteq \calO'$ be orders with $\calO'_{\fp}$ maximal and $\calO$ and $\calO'$ only differing at the prime ideal $\fp$.
   Then
   \[
     \frac{\card{\Cl_{G(\calO)} R}}{\card{\Cl_{G(O')} R}} = \frac{[R_\fp^\times : \nrd(O_\fp^\times)] }{[R^\times \cap F_{S,O'}^\times:R^\times \cap F_{S,\calO}^\times]} \ \text{ divides }\ [R_\fp^\times : \nrd(O_\fp^\times)].
   \]
 \end{corollary}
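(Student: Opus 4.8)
The plan is to derive the Corollary directly from Proposition~\ref{prop:cgrp-sequence-suborder} applied to the inclusion $\calO \subseteq \calO'$. That proposition furnishes an exact sequence of abelian groups
\[
  \mathbf 1 \to R^\times \cap F^\times_{S,\calO} \to R^\times \cap F^\times_{S,\calO'} \to F^\times_{S,\calO'}/F^\times_{S,\calO} \to \Cl_{G(\calO)} R \to \Cl_{G(\calO')} R \to \mathbf 1,
\]
together with the identification $F^\times_{S,\calO'}/F^\times_{S,\calO} \simeq \nrd(\widehat\calO'^\times)/\nrd(\widehat\calO^\times)$.  First I would record that all six terms are finite: the two class groups are finite, $[R^\times : R^{\times 2}] = 2^n$ controls the unit subgroups, and the quotient $\nrd(\widehat\calO'^\times)/\nrd(\widehat\calO^\times)$ is finite since $\nrd(\calO_\fp^\times)$ is open in $R_\fp^\times$ (it contains $1 + \fp^l R_\fp$ for $l > \val_\fp(4)$, as used just before Proposition~\ref{prop:cgrp-sequence}).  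This legitimizes taking the alternating product of orders along the exact sequence.

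Next I would take that alternating product.  Because $R^\times \cap F^\times_{S,\calO}$ injects into $R^\times \cap F^\times_{S,\calO'}$, the six-term sequence collapses to
\[
  \frac{\card{\Cl_{G(\calO)} R}}{\card{\Cl_{G(\calO')} R}} = \frac{\card{(F^\times_{S,\calO'}/F^\times_{S,\calO})}}{[R^\times \cap F^\times_{S,\calO'} : R^\times \cap F^\times_{S,\calO}]}.
\]
It then remains to evaluate the numerator as $[R_\fp^\times : \nrd(\calO_\fp^\times)]$.  Using the isomorphism $F^\times_{S,\calO'}/F^\times_{S,\calO} \simeq \nrd(\widehat\calO'^\times)/\nrd(\widehat\calO^\times)$, I would note that $\calO_\fq = \calO'_\fq$ for every prime $\fq \neq \fp$ by hypothesis, so the local reduced-norm unit groups coincide away from $\fp$; and since $\calO'_\fp$ is maximal, $\nrd(\calO_\fp'^\times) = R_\fp^\times$ (the local fact recalled at the start of the ``Class groups'' subsection of Section~\ref{sec:prelim}).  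Hence the adelic quotient reduces to $R_\fp^\times / \nrd(\calO_\fp^\times)$, of order $[R_\fp^\times : \nrd(\calO_\fp^\times)]$, and substituting yields the displayed equality.

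For the divisibility assertion I would observe that the left-hand side $\card{\Cl_{G(\calO)} R}/\card{\Cl_{G(\calO')} R}$ is a positive integer, since the last map $\Cl_{G(\calO)} R \to \Cl_{G(\calO')} R$ of the exact sequence is surjective; being equal to $[R_\fp^\times : \nrd(\calO_\fp^\times)]$ divided by the positive integer $[R^\times \cap F^\times_{S,\calO'} : R^\times \cap F^\times_{S,\calO}]$, it divides $[R_\fp^\times : \nrd(\calO_\fp^\times)]$.  I do not expect a real obstacle here; the only mild care needed is the bookkeeping of local components in identifying $F^\times_{S,\calO'}/F^\times_{S,\calO}$ with $R_\fp^\times/\nrd(\calO_\fp^\times)$, and the finiteness check that makes the Euler-characteristic argument on the exact sequence valid.
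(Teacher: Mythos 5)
Your proposal is correct and follows the same route as the paper: the paper's proof simply notes that $\nrd(\calO_\fp'^\times)=R_\fp^\times$ by maximality and invokes Proposition~\ref{prop:cgrp-sequence-suborder}, which is exactly the exact-sequence-plus-index bookkeeping you carry out (with the finiteness and divisibility details made explicit). No gaps; you have just written out what the paper leaves implicit.
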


 \begin{proof}
   Noting that $\nrd(O'^\times_\fp) = R^\times_\fp$, the result follows from Proposition~\ref{prop:cgrp-sequence-suborder}.
 \end{proof}

 \section{Bounds and algorithmic considerations} \label{sec:bounds}

In this section, we bound the set of definite orders with the Hermite property by an estimate of mass and Odlyzko bounds.  

\subsection*{Setup}

Let $\zeta_F(s)$ the Dedekind zeta function of $F$, let $d_F$ be the absolute discriminant of $F$, and let $\fN \colonequals \discrd(\calO)$ the reduced discriminant of $\calO$.  We abbreviate $h(R) \colonequals \card{\Cl R}$.   For a prime $\fp \mid \fN$ with $\Nm(\fp) = q$, we denote by $(O\,|\,\fp) \in \{-1,0,1\}$ the \defi{Eichler symbol} 
\cite[Definition 24.3.2; {\citealp[Definition II.2.10]{Vigneras80}}]{Voight18}, and we define \cite[26.1.1]{Voight18}
 \[
   \lambda(O,\fp)
   \colonequals \frac{1 - \Nm(\fp)^{-2}}{1 - (\calO\,|\,\frakp)\Nm(\fp)}
   =
   \begin{cases}
     1 + 1/q, &\text{if $(O\,|\,\fp) = 1$;}\\
     1 - 1/q, &\text{if $(O\,|\,\fp) = -1$;}\\
     1 - 1/q^2, &\text{if $(O\,|\,\fp) = 0$.}
   \end{cases}
 \]

We have the following generalization of Eichler's mass formula to arbitrary definite orders.

\begin{theorem}[Mass formula]  \label{thm:massformula}
We have
\begin{equation}
   \mass(\Cls\calO) = \frac{2 \zeta_F(2)}{(2\pi)^{2n}} d_F^{3/2} h(R) \Nm(\fN) \prod_{\fp \mid \fN} \lambda(O,\fp).
\end{equation}
\end{theorem}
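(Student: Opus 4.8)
The plan is to reduce Theorem~\ref{thm:massformula} to the classical Eichler mass formula for \emph{maximal} orders, via the comparison of masses under an inclusion of orders provided by Lemma~\ref{lemma:mass-fiber}, and then to account for the discrepancy by a local unit-index computation; alternatively, the statement may simply be quoted from Voight \cite[Chapters 25--26]{Voight18}. Fix a maximal order $\calO' \supseteq \calO$ in $B$ --- one exists because the $R$-orders containing $\calO$ lie between $\calO$ and its dual lattice $\calO^{\vee}$, hence form a finite nonempty set, any maximal member of which is a maximal order. By Lemma~\ref{lemma:mass-fiber}(b),
\[
  \mass(\Cls\calO) = [\widehat{\calO'}^\times : \widehat{\calO}^\times]\,\mass(\Cls\calO'),
\]
with $[\widehat{\calO'}^\times : \widehat{\calO}^\times] = \prod_{\fp}[\calO'^\times_\fp : \calO^\times_\fp]$; the factor at $\fp$ equals $1$ whenever $\calO_\fp$ is maximal, so the product is supported on the finitely many primes at which $\calO_\fp$ is non-maximal, all of which divide $\fN = \discrd(\calO)$. (Since Lemma~\ref{lemma:mass-fiber}(b) exhibits this index intrinsically as $\mass(\Cls\calO)/\mass(\Cls\calO')$, it does not depend on the chosen $\calO'$.)

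Now let $\fD = \disc B = \discrd(\calO')$. The Eichler--Siegel mass formula for maximal orders in a definite quaternion algebra (Vign\'eras \cite{Vigneras80}, Voight \cite[Chapter 25]{Voight18}, after using the functional equation of $\zeta_F$ to replace $\zeta_F(-1)$ by $\zeta_F(2)$ and $d_F$) reads
\[
  \mass(\Cls\calO') = \frac{2\zeta_F(2)}{(2\pi)^{2n}}\,d_F^{3/2}\,h(R)\prod_{\fp \mid \fD}\bigl(\Nm(\fp) - 1\bigr).
\]
So it suffices to prove the local identity: for every $\fp \mid \fN$,
\[
  [\calO'^\times_\fp : \calO^\times_\fp] = \Nm\bigl(\discrd(\calO_\fp)\bigr)\,\lambda(\calO,\fp) \cdot
  \begin{cases}
    1, & \text{$\fp$ unramified in $B$,}\\[1mm]
    \bigl(\Nm(\fp) - 1\bigr)^{-1}, & \text{$\fp$ ramified in $B$.}
  \end{cases}
\]
Granting this and taking the product over $\fp \mid \fN$, the factors $\Nm(\fp) - 1$ at the ramified primes cancel those in the maximal-order formula, the factors $\Nm(\discrd(\calO_\fp))$ multiply to $\Nm(\fN)$, and Theorem~\ref{thm:massformula} drops out.

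I expect the local identity to be the main obstacle. One proves it by filtering $\calO^\times_\fp$ and $\calO'^\times_\fp$ through the congruence subgroups $1 + \rad(\calO_\fp)^{j}$ and $1 + \fp\calO'_\fp$, so that $[\calO'^\times_\fp : \calO^\times_\fp]$ is determined by the orders of the finite rings $\calO_\fp/\rad(\calO_\fp)$ and $\calO'_\fp/\rad(\calO'_\fp)$ together with the $R_\fp$-index $[\calO'_\fp : \calO_\fp]$ (which feeds $\Nm(\discrd(\calO_\fp))$). The three possible values of $\lambda(\calO,\fp)$ match the cases where the semisimple quotient $\calO_\fp/\rad(\calO_\fp)$ is $\FF_q \times \FF_q$, the quadratic extension field $\FF_{q^2}$ of $\FF_q$, or $\FF_q$ --- exactly the information recorded by the Eichler symbol $(\calO \mid \fp)$ --- and the identity then follows from the structure theory of local quaternion orders as developed in Voight \cite[\S 24.3, \S 26.1, \S 26.6]{Voight18}; the only delicate point is matching the exponent of $\fp$ in $\discrd(\calO_\fp)$ against these residue counts.
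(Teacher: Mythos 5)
Your argument is correct, but it is organized quite differently from the paper: the paper does not prove Theorem~\ref{thm:massformula} at all, it simply cites Vign\'eras (for Eichler orders) and Voight \cite[Main Theorem 26.1.5]{Voight18} (in general), whereas you derive the general formula from the maximal-order case. Your reduction is sound and non-circular: Lemma~\ref{lemma:mass-fiber}(b) is proved in the paper before the mass formula and independently of it, so $\mass(\Cls\calO)=[\widehat\calO'^\times:\widehat\calO^\times]\mass(\Cls\calO')$ may be used; the maximal-order Eichler mass formula in the form you quote agrees with the classical $2^{1-n}\abs{\zeta_F(-1)}h(R)\prod_{\fp\mid\fD}(\Nm\fp-1)$ after the functional equation, which is exactly the constant in the theorem; and your local identity is equivalent to the formula $[\calO'^\times_\fp:\calO_\fp^\times]=q^m\lambda(\calO,\fp)$ (times $(1-1/q)^{-1}$ at ramified $\fp$), which is Voight's Lemma 26.6.7 and is in fact quoted verbatim later in the paper as Lemma~\ref{lem:sffindexbound} --- so the step you flag as ``the main obstacle'' is a known local computation rather than a gap, and you need not reprove it via your filtration sketch. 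The bookkeeping also checks: $\fD\mid\fN$, the local factors at $\fp\mid\fD$ where $\calO_\fp$ is maximal are $1$ on both sides of your identity, $\prod_{\fp\mid\fN}\Nm(\discrd(\calO_\fp))=\Nm(\fN)$, and the $(\Nm\fp-1)$ factors cancel as you say. What your route buys is a proof of the general mass formula within the paper's own framework using only two external inputs (the maximal-order mass formula and the local unit index), making the paper more self-contained; what the paper's citation buys is brevity, deferring the same local comparison to Voight's treatment.
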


\begin{proof}
See Vign\'eras \cite[Corollaire V.2.3]{Vigneras80} for the case of $\calO$ an Eichler order and more generally Voight \cite[Main Theorem 26.1.5, Remark 26.1.14]{Voight18} for a proof, complete references, and discussion.
\end{proof}
 
 \subsection*{Bounds}
 
 We now bound the set of Hermite orders to a finite set.  
 
\begin{proposition}[Hallouin--Maire \cite{HallouinMaire06}] \label{prop:df-bound}
  If $\calO$ is a Hermite order, then $n=[F:\QQ] \le 9$ and
  \begin{align}
    d_F^{1/n} &\le 2^{4/3 - 2/(3n)} \pi^{4/3} \bigg( \frac{\card{\Cl^+ R}}{\card{\Cl R}} \bigg)^{2/(3n)} \label{eq:df1} \\
              &    \le 2^{2 - 4/(3n)} \pi^{4/3}. \label{eq:df2}
  \end{align}
  Moreover, if $n = 9$, then $d_F^{1/9} < 13.53$.
\end{proposition}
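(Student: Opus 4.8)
The plan is to pass to maximal orders and then read off the bounds directly from the Eichler mass formula. First I would invoke Corollary~\ref{cor:canc-overorder}: every $R$-order is contained in a maximal order and the Hermite property ascends to superorders, so if $B$ admits a Hermite $R$-order it admits a Hermite maximal order; as \eqref{eq:df1} and \eqref{eq:df2} constrain only $F$, we may assume $\calO$ is maximal. For such $\calO$ one may take $S=\emptyset$ in Definition~\ref{def:classGO}, and since $\nrd(\calO_\fp^\times)=R_\fp^\times$ for every $\fp$ we get $F^\times_{S,\calO}=F^\times_{>0}$; hence $\Cl_{G(\calO)}R=\Idl R/\PIdl_{S,\calO}R$ is the narrow class group, so $\card{\Cl_{G(\calO)}R}=\card{\Cl^+ R}$, and by Proposition~\ref{prop:clgrowth} $\card{\Cl^+ R}/\card{\Cl R}=[R^\times_{>0}:R^{\times 2}]$.

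Next I would combine Corollary~\ref{cor:equamassHermite} — which, since $\calO$ is Hermite, reads $\mass(\Cls\calO)=\card{\Cl_{G(\calO)}R}/[\calO^\times:R^\times]$ — with Theorem~\ref{thm:massformula} applied to $\discrd(\calO)=\fD$, the discriminant of $B$, where $\lambda(\calO,\fp)=1-\Nm(\fp)^{-1}$ so that $\Nm(\fD)\prod_{\fp\mid\fD}\lambda(\calO,\fp)=\prod_{\fp\mid\fD}(\Nm(\fp)-1)$. Equating the two expressions for $\mass(\Cls\calO)$, solving for $d_F^{3/2}$, and discarding the factors $\zeta_F(2)>1$, $[\calO^\times:R^\times]\ge 1$, and $\prod_{\fp\mid\fD}(\Nm(\fp)-1)\ge 1$ gives
\[
  d_F^{3/2}\ \le\ \frac{(2\pi)^{2n}}{2}\cdot\frac{\card{\Cl^+ R}}{\card{\Cl R}};
\]
raising to the power $\tfrac{2}{3n}$ and writing $(2\pi)^{2n}=2^{2n}\pi^{2n}$ yields \eqref{eq:df1}. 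For \eqref{eq:df2} I would bound $\card{\Cl^+ R}/\card{\Cl R}=[R^\times_{>0}:R^{\times 2}]$ from above: since $F$ is totally real of degree $n$, Dirichlet's unit theorem gives $[R^\times:R^{\times 2}]=2^n$, and as $-1\in R^\times$ is not totally positive we have $R^{\times 2}\subseteq R^\times_{>0}\subsetneq R^\times$, whence $[R^\times:R^\times_{>0}]\ge 2$ and so $[R^\times_{>0}:R^{\times 2}]\le 2^{n-1}$; substituting into \eqref{eq:df1} and collecting exponents gives $d_F^{1/n}\le 2^{2-4/(3n)}\pi^{4/3}$, i.e.\ \eqref{eq:df2}.

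It remains to extract the bound on $n$ and the refinement for $n=9$, and this is where the real work lies. Inequality \eqref{eq:df2} forces $d_F^{1/n}<2^2\pi^{4/3}\approx 18.4$; combined with Odlyzko's unconditional lower bounds for root discriminants of totally real fields this already bounds $n$, but too weakly (essentially Vign\'eras's argument). To reach $n\le 9$ one must use the sharper \eqref{eq:df1}: I would bound $\card{\Cl^+ R}$ from above by the effective estimate $\card{\Cl^+ R}\ll_n\sqrt{d_F}\,(\log d_F)^{n-1}$ coming from the analytic class number formula (an upper bound for $\mathrm{Res}_{s=1}\zeta_F(s)$ together with a lower bound for the regulator), insert this into \eqref{eq:df1} to obtain an absolute bound on $d_F$ depending only on $n$, and then compare with Odlyzko's lower bounds to exclude $n\ge 10$. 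Finally, for $n=9$ inequality \eqref{eq:df2} gives $d_F^{1/9}\le 2^{2-4/27}\pi^{4/3}<16.7$, so $F$ lies in the finite, explicitly known list of totally real nonic fields of root discriminant at most that value; checking that each such field satisfies $\card{\Cl^+ R}/\card{\Cl R}\le 16$ and re-inserting into \eqref{eq:df1} yields $d_F^{1/9}\le 2^{14/9}\pi^{4/3}<13.53$. Thus the derivations of \eqref{eq:df1} and \eqref{eq:df2} are routine bookkeeping with the mass formula and Corollary~\ref{cor:equamassHermite}; the main obstacle is converting \eqref{eq:df1} into the sharp degree bound, which needs effective analytic input (Odlyzko discriminant bounds and a Brauer--Siegel-type upper bound for $\card{\Cl^+ R}$) together with the finite computation over small-discriminant totally real nonic fields — the point at which the analysis of Hallouin--Maire is needed.
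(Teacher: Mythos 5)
Your derivation of \eqref{eq:df1} and \eqref{eq:df2} coincides with the paper's: reduce to a maximal order via Corollary~\ref{cor:canc-overorder}, identify $\Cl_{G(\calO)}R$ with $\Cl^+R$, combine Corollary~\ref{cor:equamassHermite} with the mass formula (Theorem~\ref{thm:massformula}) using the trivial estimates, and bound $\card{\Cl^+R}/\card{\Cl R}=[R^\times_{>0}:R^{\times 2}]\le 2^{n-1}$. The gap is precisely in the step you yourself call ``the real work'': passing from \eqref{eq:df1} to $n\le 9$. Your proposed analytic input, $\card{\Cl^+R}\ll_n \sqrt{d_F}(\log d_F)^{n-1}$ together with $\card{\Cl R}\ge 1$, cannot do the job: inserted into \eqref{eq:df1} it gives $d_F^{3/2}\ll_n (2\pi)^{2n}\sqrt{d_F}(\log d_F)^{n-1}$, i.e.\ a root-discriminant bound of rough size $4\pi^2\approx 39.5$ times logarithmic factors, which is strictly \emph{weaker} than the bound $2^2\pi^{4/3}\approx 18.4$ you already have from \eqref{eq:df2}. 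The quantity entering \eqref{eq:df1} is the ratio $\card{\Cl^+R}/\card{\Cl R}$, which is at most $2^{n-1}$; replacing the numerator by a $\sqrt{d_F}$-sized estimate while discarding the denominator loses exactly the cancellation that makes the argument work. Since \eqref{eq:df2} plus Odlyzko only yields $n\le 14$, you need a genuinely better bound on this ratio for large $n$, and you have not supplied one.

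The paper closes the gap differently. For $n\ge 8$ it first forces $\Cl R$ to be trivial: the Hilbert class field of $F$ is totally real of degree $n\,h(R)$ with the same root discriminant, so a nontrivial class group would contradict the Odlyzko bound in degree $\ge 2n$. Then the Armitage--Fr\"ohlich theorem gives $\rk_2\Cl^+R\le\lfloor n/2\rfloor$, hence $\card{\Cl^+R}/\card{\Cl R}=\card{\Cl^+R}\le 2^{\lfloor n/2\rfloor}$ for $n\ge 8$; substituting into \eqref{eq:df1} gives $d_F^{1/n}\le 2^{4/3+2(\lfloor n/2\rfloor-1)/(3n)}\pi^{4/3}$, from which Odlyzko yields $n\le 10$ together with $d_F^{1/10}\le 13.95$ and $d_F^{1/9}<13.53$, and degree $10$ is then eliminated because Voight's tables contain no totally real decic field of root discriminant $\le 13.95$. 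Your fallback for $n=9$ --- enumerate all totally real nonic fields with $d_F^{1/9}<16.7$ and verify $\card{\Cl^+R}/\card{\Cl R}\le 16$ field by field --- is not a proof as it stands: no complete table up to root discriminant $16.7$ in degree $9$ is cited or available in the tabulations the paper relies on, and the inequality you would be checking is exactly what Armitage--Fr\"ohlich delivers for free once $\Cl R$ is trivial. So \eqref{eq:df1} and \eqref{eq:df2} are established, but the assertions $n\le 9$ and $d_F^{1/9}<13.53$ are not.
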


\begin{proof}
  We repeat a simple variation of the proof of Hallouin--Maire for the convenience of the reader.

  If $\calO$ is a Hermite order, then the same is true for every order containing $\calO$, by Corollary~\ref{cor:canc-overorder}.
  Thus, we may without restriction assume that $\calO$ is maximal.
  Then $\nrd(O_\fp^\times) = R_\fp^\times$ for all primes $\fp$ of $R$, and hence $\Cl_{G(\calO)} R = \Cl^+ R$ is the narrow class group.

 In Corollary \ref{cor:equamassHermite}, we showed that $\calO$ is Hermite then 
\begin{equation}
\mass(\Cls\calO) = \frac{\card{\Cl_{G(\calO)} R}}{[O^\times:R^\times]} \leq \card{\Cl^+ R}. 
\end{equation}
Applying the mass formula (Theorem \ref{thm:massformula}) and using the trivial estimates $\zeta_F(2) \ge 1$ and $\Nm(\fN) \prod_{\fp | \fN} \lambda(O|\fp) \ge 1$, the first bound for the root discriminant \eqref{eq:df1} is obtained.

  By Proposition~\ref{prop:clgrowth}, 
  \[
    \frac{\card{\Cl^+ R}}{\card{\Cl R}} = \frac{2^n}{[R^\times : R^\times \cap F^\times_{>0}]} \le 2^{n-1}.
  \]
  Substituting this into the first bound, we get the bound \eqref{eq:df2}.

  Comparing \eqref{eq:df2} with the discriminant bounds for totally real fields found in Odlyzko's tables \cite{Odlyzko76}, we conclude $n \le 14$.

  The Hilbert class field of $F$ is a totally real field of absolute degree $nh(R)$ and with the same root discriminant as $F$.
  If $\Cl R$ is nontrivial, then the degree of the Hilbert class field is at least $2n$.
  Again comparing with the Odlyzko bounds for totally real fields, we conclude $\Cl R$ must be trivial if $n \ge 8$.
  The quotient $\Cl^+R / \Cl R$ is an elementary abelian $2$-group and the Armitage--Fr\"ohlich theorem \cite[I]{ArmitageFroehlich} states that
  \[
    \rk_2 \Cl^+ R - \rk_2 \Cl R \le \lfloor n / 2 \rfloor.
  \]
  Hence, in case $n \ge 8$, we have $\card{\Cl^+ R} / \card{ \Cl R } = \card{\Cl^+ R } \le 2^{\lfloor n/2 \rfloor}$.

  Substituting into \eqref{eq:df1}, we must have $d_F^{1/n} \le 2^{4/3 + 2(\lfloor n/2 \rfloor -1)/(3n)} \pi^{4/3}$ if $n \ge 8$.
  Comparing this improved bound with the Odlyzko tables, we find $n \le 10$.
  Moreover, $d_F^{1/10} \le 13.95$ for $n=10$ and $d_F^{1/9} < 13.53$ for $n=9$.
  By Voight's tables of totally real fields \cite{Voight08b}, there are no such fields of degree $10$.
  Hence, $n \le 9$.
\end{proof}

\begin{remark}
  Without Odlyzko's tables, Vign\'eras \cite{Vigneras76} showed $[F:\QQ] \le 33$.
  By first bounding $n \le 14$ as above and then using the bound in \eqref{eq:df1} for each such $n$ and different possibilities of $\card{\Cl^+R} / \card{\Cl R}$, Hallouin--Maire show $n \le 8$ (without using Armitage-Fr\"ohlich). Using more careful arguments, Hallouin--Maire actually show $n \le 6$ in \cite[Proposition 11]{HallouinMaire06} and for these degrees obtain improved bounds on the discriminant in \cite[Proposition 12]{HallouinMaire06}.
  Since we leave the ultimate classification up to a computer, \emph{a priori} we only need bounds that are good enough to ensure that all fields in questions have been tabulated.
  The weaker bounds that are more easily obtained suffice for us.
\end{remark}

\begin{proposition} \label{prop:disc-bound}
  If $\fD=\disc B$ and $B$ contains a Hermite order $\calO$, then
  \[
    \prod_{\fp \mid \fD} (\Nm(\fp) - 1) = \Nm(\fD) \prod_{\fp \mid \fD} \lambda(O,\fp) \le \frac{2^{2n-1} \pi^{2n}}{d_F^{3/2}} \frac{\card{\Cl^+ R}}{\card{\Cl R}}.
  \]
\end{proposition}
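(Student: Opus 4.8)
The plan is to reduce at once to the case of a maximal order and then read off the bound from the mass characterization of the Hermite property (Corollary~\ref{cor:equamassHermite}) combined with the mass formula (Theorem~\ref{thm:massformula}), simply discarding the harmless factors $\zeta_F(2)\ge 1$ and $[\calO^\times:R^\times]\ge 1$.

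First I would invoke Corollary~\ref{cor:canc-overorder}: every superorder of a Hermite order is Hermite, so I may replace $\calO$ by a maximal order containing it and assume $\calO$ is maximal. Then $\fN\colonequals\discrd(\calO)=\fD$. At each prime $\fp\mid\fD$ the completion $B_\fp$ is a division algebra, so the Eichler symbol is $(\calO\mid\fp)=-1$ and hence $\lambda(\calO,\fp)=1-\Nm(\fp)^{-1}$; multiplying $\Nm(\fp)\lambda(\calO,\fp)=\Nm(\fp)-1$ over $\fp\mid\fD$ gives the first (essentially definitional) equality
\[
\Nm(\fD)\prod_{\fp\mid\fD}\lambda(\calO,\fp)=\prod_{\fp\mid\fD}\bigl(\Nm(\fp)-1\bigr).
\]
Moreover, for a maximal order $\nrd(\calO_\fp^\times)=R_\fp^\times$ for all $\fp$, so, exactly as noted in the proof of Proposition~\ref{prop:df-bound}, $\Cl_{G(\calO)}R=\Cl^+ R$ is the narrow class group.

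Now I would apply Corollary~\ref{cor:equamassHermite}: since $\calO$ is Hermite and $[\calO^\times:R^\times]\ge 1$,
\[
\mass(\Cls\calO)=\frac{\card{\Cl_{G(\calO)}R}}{[\calO^\times:R^\times]}\le\card{\Cl^+ R}.
\]
Feeding this into the mass formula (Theorem~\ref{thm:massformula}) with $\fN=\fD$ and using $\zeta_F(2)\ge 1$,
\[
\frac{2}{(2\pi)^{2n}}\,d_F^{3/2}h(R)\,\Nm(\fD)\prod_{\fp\mid\fD}\lambda(\calO,\fp)\ \le\ \mass(\Cls\calO)\ \le\ \card{\Cl^+ R}.
\]
Solving for $\Nm(\fD)\prod_{\fp\mid\fD}\lambda(\calO,\fp)$ and using $h(R)=\card{\Cl R}$ gives
\[
\Nm(\fD)\prod_{\fp\mid\fD}\lambda(\calO,\fp)\ \le\ \frac{(2\pi)^{2n}}{2\,d_F^{3/2}}\cdot\frac{\card{\Cl^+ R}}{\card{\Cl R}}=\frac{2^{2n-1}\pi^{2n}}{d_F^{3/2}}\cdot\frac{\card{\Cl^+ R}}{\card{\Cl R}},
\]
which together with the displayed equality is the assertion.

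There is no genuinely hard step here; the one point that needs care is the initial reduction to a maximal order, since for a general Hermite order $\discrd(\calO)$ can strictly contain $\fD$ and the Eichler symbols at primes dividing $\fD$ need not all equal $-1$, so the middle expression $\Nm(\fD)\prod_{\fp\mid\fD}\lambda(\calO,\fp)$ is only guaranteed to coincide with $\prod_{\fp\mid\fD}(\Nm(\fp)-1)$ once $\calO$ is maximal. After that reduction everything is a one-line manipulation of the mass formula, entirely parallel to the derivation of \eqref{eq:df1} in Proposition~\ref{prop:df-bound}.
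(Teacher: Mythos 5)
Your proposal is correct and follows essentially the same route as the paper: reduce to a maximal order via Corollary~\ref{cor:canc-overorder}, note that $\lambda(\calO,\fp)=1-1/\Nm(\fp)$ at the ramified primes so $\fN=\fD$ and the product collapses to $\prod_{\fp\mid\fD}(\Nm(\fp)-1)$, and then combine the mass formula (Theorem~\ref{thm:massformula}) with $\mass(\Cls\calO)\le\card{\Cl^+ R}$ from Corollary~\ref{cor:equamassHermite}. Your explicit remark about why the reduction to a maximal order is needed for the displayed equality is a sensible clarification of a point the paper passes over silently.
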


\begin{proof}
  As in the previous proposition, we may suppose that $\calO$ is maximal.
  Then $\lambda(O,\fp) = 1 - 1/\Nm(\fp)$ for all $\fp$ with $\fp \mid \fD$, and $\lambda(O,\fp)=1$ otherwise.
  Again the claim follows from the mass formula (Theorem \ref{thm:massformula}), together with $\mass(\Cls\calO) \leq \card{\Cl^+ R}$.
\end{proof}

\begin{lemma} \label{lem:sffindexbound}
  Let $O \subseteq \calO'$ be Hermite orders.
  Suppose further that $\calO$ and $\calO'$ only differ at the prime ideal $\fp$, and that $\calO'_\fp$ is maximal.
  Let $q = \Nm(\fp)$, let $[O'_\fp:O_\fp] = q^m$, and let $2^l = [R_\fp^\times:R_\fp^{\times 2}]$.
  Then
  \[
    [O'^\times_\fp:O_\fp^\times]
    =
    q^m \lambda(O,\fp) \cdot
    \begin{cases}
      1, & \text{if $B$ is split at $\fp$; and} \\
      (1 - 1/q)^{-1}, &\text{if $B$ is ramified at $\fp$.}
    \end{cases}
  \]
  Moreover, $[O'^\times_\fp:O_\fp^\times]$ divides $2^l [O'^\times:R^\times]$.
\end{lemma}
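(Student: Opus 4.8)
The plan is to derive both statements from the Eichler-type mass formula (Theorem~\ref{thm:massformula}) combined with the local comparison of masses in Lemma~\ref{lemma:mass-fiber}(b). Since $\calO$ and $\calO'$ agree at every prime other than $\fp$, we have $[\widehat\calO'^\times:\widehat\calO^\times] = [\calO'^\times_\fp:\calO^\times_\fp]$, and Lemma~\ref{lemma:mass-fiber}(b) gives
\[
  [\calO'^\times_\fp:\calO^\times_\fp] = \frac{\mass(\Cls\calO)}{\mass(\Cls\calO')}.
\]
Substituting Theorem~\ref{thm:massformula} into numerator and denominator, the common factor $\frac{2\zeta_F(2)}{(2\pi)^{2n}}d_F^{3/2}h(R)$ cancels, leaving the quotient of $\Nm(\discrd \calO)\prod_{\fq\mid\discrd\calO}\lambda(\calO,\fq)$ by the same expression for $\calO'$.

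To evaluate this quotient I would use that reduced discriminants scale by the index, $\discrd(\calO) = [\calO':\calO]_R\,\discrd(\calO')$, so that $\discrd(\calO)$ and $\discrd(\calO')$ differ precisely by the factor $\fp^m$ and the norm part of the ratio is $q^m$; moreover $\lambda(\calO,\fq) = \lambda(\calO',\fq)$ for $\fq\ne\fp$, so those factors cancel. If $B$ is split at $\fp$, then $\fp\nmid\discrd(\calO')$ (because $\calO'_\fp$ is maximal and split) while $\fp\mid\discrd(\calO)$ (because $\calO\subsetneq\calO'$ forces $m\geq1$, so $\calO_\fp$ is not maximal), hence only the factor $\lambda(\calO,\fp)$ survives and the ratio is $q^m\lambda(\calO,\fp)$. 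If $B$ is ramified at $\fp$, then $\discrd(\calO')_\fp = \fp$ and the maximal order $\calO'_\fp$ has Eichler symbol $-1$, so $\lambda(\calO',\fp) = 1-1/q$ appears in the denominator and the ratio is $q^m\lambda(\calO,\fp)(1-1/q)^{-1}$. This establishes the displayed formula.

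For the divisibility assertion I would invoke the Hermite hypotheses. By Corollary~\ref{cor:equamassHermite} applied to $\calO$ and to $\calO'$ we have $\mass(\Cls\calO) = \card{\Cl_{G(\calO)}R}/[\calO^\times:R^\times]$ and $\mass(\Cls\calO') = \card{\Cl_{G(\calO')}R}/[\calO'^\times:R^\times]$; dividing, and using $R^\times\subseteq\calO^\times\subseteq\calO'^\times$ so that $[\calO'^\times:R^\times]/[\calO^\times:R^\times] = [\calO'^\times:\calO^\times]$, gives
\[
  [\calO'^\times_\fp:\calO^\times_\fp] = \frac{\card{\Cl_{G(\calO)}R}}{\card{\Cl_{G(\calO')}R}}\,[\calO'^\times:\calO^\times].
\]
Now $[\calO'^\times:\calO^\times]$ divides $[\calO'^\times:R^\times]$; by Corollary~\ref{cor:indexbound} the first factor divides $[R_\fp^\times:\nrd(\calO^\times_\fp)]$; and since the reduced norm of a scalar unit is its square, $R_\fp^{\times 2}\subseteq\nrd(\calO^\times_\fp)$, so $[R_\fp^\times:\nrd(\calO^\times_\fp)]$ divides $[R_\fp^\times:R_\fp^{\times 2}] = 2^l$. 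Chaining these divisibilities shows that $[\calO'^\times_\fp:\calO^\times_\fp]$ divides $2^l\,[\calO'^\times:R^\times]$.

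I expect the only delicate point to be the bookkeeping of the local Eichler factors in the second paragraph: one must track exactly which primes divide $\discrd(\calO)$ as opposed to $\discrd(\calO')$ in the split and ramified cases, since this is what produces the extra $(1-1/q)^{-1}$ when $B$ is ramified at $\fp$. Everything else is a routine sequence of cancellations in the mass formula and of divisibilities, given Theorem~\ref{thm:massformula}, Corollary~\ref{cor:equamassHermite}, and Corollary~\ref{cor:indexbound}.
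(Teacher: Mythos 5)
Your proposal is correct, and it differs from the paper's proof mainly in how the displayed index formula is obtained. The paper simply cites a purely local computation (Voight \cite[Lemma 26.6.7]{Voight18}) for $[\calO'^\times_\fp:\calO^\times_\fp]$, whereas you derive it globally: from Lemma~\ref{lemma:mass-fiber}(b) the index equals $\mass(\Cls\calO)/\mass(\Cls\calO')$, and the mass formula (Theorem~\ref{thm:massformula}) together with $\discrd(\calO)=[\calO':\calO]_R\,\discrd(\calO')$ and the bookkeeping of Eichler symbols at $\fp$ (no factor for a split maximal $\calO'_\fp$, a factor $1-1/q$ for a ramified maximal one) gives exactly the stated value; this is valid and does not even use the Hermite hypothesis, just like the paper's citation. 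The one caveat worth noting is that, if one unwinds the reference for Theorem~\ref{thm:massformula}, the general mass formula for non-Eichler orders is itself proved by comparing unit indices of an order with a maximal superorder, i.e.\ by essentially the same local lemma the paper cites; so your route is only non-circular when the mass formula is accepted as a black box, which is how this paper treats it. For the divisibility statement your argument is in substance identical to the paper's: you apply the Hermite criterion to both $\calO$ and $\calO'$ (via Corollary~\ref{cor:equamassHermite} and Lemma~\ref{lemma:mass-fiber}(b), where the paper uses Proposition~\ref{prop:sff}(iv) and Theorem~\ref{thm:mass}(c)), arriving at the same identity $[\calO'^\times_\fp:\calO^\times_\fp]=\bigl(\card{\Cl_{G(\calO)}R}/\card{\Cl_{G(\calO')}R}\bigr)\,[\calO'^\times:\calO^\times]$, and then conclude with Corollary~\ref{cor:indexbound} and $R_\fp^{\times 2}\subseteq\nrd(\calO_\fp^\times)$, exactly as the paper does.
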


\begin{proof}
  For the formula for $[O'^\times_\fp:O_\fp^\times]$, see Voight \cite[Lemma 26.6.7]{Voight18}.

  Proposition \ref{prop:sff}(iv) and Theorem~\ref{thm:mass}\ref*{mass:suborder} imply
  \[
    \frac{1}{[O^\times:R^\times]} = \mass(\Cls^{[R]} O) = \frac{\card{\Cl_{G(O')} R}}{\card{\Cl_{G(\calO)} R}} [O'^\times_\fp:O_\fp^\times] \mass(\Cls^{[R]} \calO').
  \]
  Substituting $\mass(\Cls^{[R]} \calO') = [O'^\times:R^\times]^{-1}$, together with Corollary~\ref{cor:indexbound}, gives
  \[
    [O^\times:R^\times] = \frac{[R_\fp^\times:\nrd(O_\fp^\times)] [O'^\times:R^\times]}{[R^\times \cap F_{S,O'}^\times : R^\times \cap F^\times_{S,\calO}] [O'^\times_\fp:O^\times_\fp]}.
  \]
  Thus $ [O'^\times_\fp:O^\times_\fp]$ divides $[R_\fp^\times:\nrd(O_\fp^\times)] [O'^\times:R^\times]$.
  Since $R^{\times 2}_\fp \subseteq \nrd(O_\fp^\times) \subseteq R^\times_\fp$, finally $[O'^\times_\fp:O^\times_\fp]$ divides $2^l [O'^\times:R^\times]$.
\end{proof}

The previous bounds imply the following finiteness result, proven by Vign\'eras \cite{Vigneras76}.  

\begin{corollary}[{Vign\'eras \cite{Vigneras76}}] \label{cor:finitelymany}
  There exist only finitely many definite Hermite quaternion orders $\calO$.
\end{corollary}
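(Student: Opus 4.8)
The plan is to assemble the finiteness statement directly from the bounds already established in this section, reducing everything to the fact that only finitely many number fields have bounded degree and bounded root discriminant, together with the fact that any Hermite order is contained in a maximal order and maximal orders in a fixed quaternion algebra form finitely many isomorphism classes. First I would invoke Corollary~\ref{cor:canc-overorder}: if $\calO$ is Hermite, so is any superorder; in particular every maximal order containing $\calO$ is Hermite. Thus it suffices to bound the maximal Hermite orders and then, for each such maximal order, bound the number of Hermite suborders it can contain.

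For the outer finiteness, Proposition~\ref{prop:df-bound} gives $n = [F:\QQ] \le 9$ together with an explicit bound on the root discriminant $d_F^{1/n}$; since there are only finitely many number fields of bounded degree and bounded discriminant (Hermite--Minkowski), only finitely many totally real fields $F$ arise. For each such $F$, Proposition~\ref{prop:disc-bound} bounds $\Nm(\fD)$ where $\fD = \disc B$, so only finitely many definite quaternion algebras $B$ over $F$ arise up to isomorphism. For each such $B$, the maximal orders form finitely many conjugacy classes (there are $\#\Cls\calO < \infty$ of them, by Jordan--Zassenhaus, or one may note maximal orders in a fixed genus form finitely many classes), hence finitely many maximal Hermite orders up to isomorphism.

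It remains to bound, for a fixed maximal Hermite order $\calO'$, the number of Hermite suborders. I would argue by descending through a chain of orders differing at a single prime: any order $\calO \subseteq \calO'$ is obtained from $\calO'$ by a finite sequence of "one-prime" steps, and at each step Lemma~\ref{lem:sffindexbound} shows that if both orders are Hermite and differ only at $\fp$ with $\calO'_\fp$ maximal, then $[\calO'^\times_\fp : \calO_\fp^\times]$ divides $2^l [\calO'^\times : R^\times]$, a quantity depending only on $\calO'$ and on $l = v_\fp(2) + 1$-type data at $\fp$. Combined with the formula $[\calO'^\times_\fp:\calO_\fp^\times] = q^m \lambda(\calO,\fp) \cdot (\text{unit factor})$, this forces $q^m = [\calO'_\fp:\calO_\fp]$ to be bounded in terms of $\calO'$ alone, and moreover forces all but finitely many primes $\fp$ to satisfy $\calO_\fp = \calO'_\fp$ (since for $q$ large the index bound becomes impossible unless $m=0$). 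So there are only finitely many local indices and finitely many primes at which $\calO$ can differ from $\calO'$; since at each such prime there are only finitely many orders of bounded index in $\calO'_\fp$, there are only finitely many Hermite $\calO \subseteq \calO'$ up to isomorphism. Summing over the finitely many choices of $F$, $B$, and maximal $\calO'$ yields the claim.

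The main obstacle is the last step: making rigorous that a Hermite suborder can differ from a fixed maximal Hermite order at only finitely many primes, and with bounded local index, using Lemma~\ref{lem:sffindexbound}. The subtlety is that Lemma~\ref{lem:sffindexbound} is stated for orders differing at a single prime with $\calO'_\fp$ maximal, so one must set up the telescoping carefully---interpolating a chain $\calO = \calO_0 \subseteq \calO_1 \subseteq \cdots \subseteq \calO_k = \calO'$ with consecutive terms differing at one prime and with the upper term maximal at that prime---and check that the Hermite property is preserved up the chain (which it is, by Corollary~\ref{cor:canc-overorder}), so that the divisibility constraint applies at every step and the product of local indices telescopes to the global index $[\calO':\calO]$, which is thereby bounded.
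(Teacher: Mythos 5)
Your proposal is correct and follows essentially the same route as the paper: Proposition~\ref{prop:df-bound} to bound the field, Proposition~\ref{prop:disc-bound} to bound the ramification of $B$, finiteness of types of maximal orders, and Lemma~\ref{lem:sffindexbound} (together with Corollary~\ref{cor:canc-overorder}) to bound the index of a Hermite suborder of a maximal order. You merely spell out the one-prime chain/telescoping argument behind the index bound, which the paper leaves implicit in its one-line appeal to Lemma~\ref{lem:sffindexbound}.
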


\begin{proof}
  Since $F$ is a totally real field with bounded root discriminant by Proposition~\ref{prop:df-bound}, there exist only finitely many such fields.
  For each field, there exist only a finite number of possible choices for the ramified places of $B$ by Proposition~\ref{prop:disc-bound}.
  Finally, for each of the finitely many isomorphism classes of maximal orders in $B$, the index of a Hermite suborder is bounded by Lemma~\ref{lem:sffindexbound}.
\end{proof}

\subsection*{Algorithm}

We are now in a position to state an algorithm that finds all definite Hermite orders.

\begin{algorithm} \label{alg:theoneandonly}
  The following algorithm enumerates all definite Hermite orders.
  \begin{enumerate}
  \item[1.] Using the tabulation of totally real fields \cite{Voight08,Voight08b}, enumerate all fields with $n=[F:\QQ] \le 9$ and, $d_F^{1/n} < 16.4$ if $n \le 8$, respectively $d_F^{1/n} < 13.53$ if $n=9$.
  For each eligible field $F$, use Proposition~\ref{prop:disc-bound} to compute a finite list of primes $\fp$ at which $B$ can ramify.
  \item[2.] For each such algebra $B$, determine a set of representatives for the isomorphism classes of maximal orders.
  \item[3.] For each Hermite maximal order $\calO'$, using Lemma~\ref{lem:sffindexbound}, compute a list of prime ideals of $R$ at which we need to consider non-maximal orders.
  \item[4.] Iteratively compute suborders of $\calO'$ at the given primes, using Lemma~\ref{lem:sffindexbound} to bound the necessary index, and check them for the desired property by computing their stable class group.
      \end{enumerate}
\end{algorithm}

\begin{proof}[Proof of correctness]
The bound in Step 1 is valid by Proposition \ref{prop:df-bound}.  For Step 2, we refer to Kirschmer--Voight \cite{KirschmerVoight2,KirschmerVoight1}.  In Step 4, to check whether a given order is a Hermite ring, we compute the stable class group $\Cl_{G(\calO)} R$ and $\mass(\Cls\calO)$ and use Proposition~\ref{prop:sff}\ref{sff:mass}.
\end{proof}

The enumeration of suborders and the computation of $\Cl_{G(\calO)} R$ are not readily available in existing computer algebra systems.
Thus, we give some more detail on how these steps can be implemented efficiently.

\begin{remark}[Computation of $\Cl_{G(\calO)} R$]
  Since algorithms to compute ray class groups are already implemented in computer algebra systems, it is easiest to compute $\Cl_{G(\calO)} R$ as a quotient of such a group.

  \begin{enumerate}
  \item[1.]
    We compute the stable class group as a quotient of a ray class group.
    For a prime ideal $\fp$ of $R$, define $l(\fp) = \val_\fp(4) + 1$.
    Let $\fm=\prod_{\fp \in S} \fp^{l(\fp)}$, and let $F^\times_\fm = \{ a \in F^\times_{>0} : a \equiv 1 \mod \fm \}$.
    By choice of $l(\fp)$ and Hensel's Lemma, $1 + \fp^{l(\fp)} R_\fp \subseteq  R_\fp^{\times 2} \subseteq \nrd(O_\fp^\times)$.
    Thus, $F^\times_\fm \subseteq F^\times_{S,\calO}$ and the stable class group can be realized as a quotient of the ray class group $\Cl^+_{\fm} R = \Idl_S R / F^\times_\fm$.
    More precisely, there is an exact sequence
    \[
      \begin{tikzcd}
        F^\times_{S,\calO} / F^\times_\fm \ar[r] & \Cl^+_\fm R \ar[r] & \Cl_{G(\calO)} R \ar[r] & 1.
      \end{tikzcd}
    \]
    As in Proposition~\ref{prop:cgrp-sequence-suborder},
    \[
      F^\times_{S,\calO} / F^\times_\fm \simeq \prod_{\fp \in S} \nrd(O_\fp^\times) / (1 + \fp^{l(\fp)} R_\fp).
    \]
    To compute $\Cl_{G(\calO)} R$, we therefore first compute $\Cl^+_\fm R$, and then compute, for each $\fp \in S$, a set of generators for $\nrd(O_\fp^\times) / (1 + \fp^{l(\fp)} R_\fp)$.
    Using the Chinese Remainder Theorem to obtain suitable global representatives for these generators, we compute $\Cl_{G(\calO)} R$ as quotient of $\Cl^+_\fm R$.
 
  \item[2.] To compute $\nrd(O_\fp^\times) / (1 + \fp^{l(\fp)} R_\fp)$, first note that $\nrd(1 + \fp^{l(\fp)} O_\fp) \subseteq 1 + \fp^{l(\fp)} R_\fp$.
    Hence, the reduced norm induces a homomorphism
    \[
      \begin{tikzcd}
        O_\fp^\times/(1 + \fp^{l(\fp)} O_\fp)  \ar[r] & R_\fp^\times\!/ (1 + \fp^{l(\fp)} R_\fp).
      \end{tikzcd}
    \]
    Thus, it suffices to compute the image of a generating set of $O_\fp^\times / (1+\fp^{l(\fp)} O_\fp)$ under $\nrd$.

    Since
    \[
      O_\fp^\times  \supseteq 1+ \fp O_\fp \supsetneq 1 + \fp^2 O_\fp \supsetneq \dots \supsetneq 1 + \fp^{l(\fp)} O_\fp,
    \]
    it suffices to compute generating sets of the multiplicative groups $O_\fp^\times / (1 + \fp O_\fp) \simeq (O_\fp / \fp O_\fp)^\times \simeq (O/ \fp O)^\times$ and $(1 + \fp^i O_\fp) / (1 + \fp^{i+1} O_\fp)$.
    The latter group is isomorphic to the additive group $O/ \fp O$.
    A generating set for $(O/\fp O)^\times$ can be computed since $O/\fp O$ is a finite-dimensional algebra over the finite field $R / \fp$.
    A $\ZZ$-basis of $\calO$ yields a generating set for the additive abelian group $O/\fp O$.
  \end{enumerate}
\end{remark}

\begin{remark}[Enumeration of suborders]
  To enumerate suborders in a systematic way, we organize them by radical idealizers \cite[Section 24.4]{Voight18}.
  We proceed as follows.
  \begin{enumerate}
  \item[1.] Compute representatives for the isomorphism classes of all maximal orders, giving a list of orders.
  \item[2.] For each prime $\fp$ for which we need to consider non-maximal orders, and for each ($\fp$-maximal) order $\calO$ computed so far:
    \begin{enumroman}
    \item[a.] First compute the hereditary suborders that are non-maximal at $\fp$.
    \item[b.] Recursively compute all suborders whose radical idealizer at $\fp$ is one of the orders computed so far.
    \end{enumroman}
  \end{enumerate}

  For a given prime $\fp$, this procedure produces a tree of orders.
  If an order $\calO$ exceeds the index bound or  fails to be Hermite, we need not check its suborders anymore due to Corollary~\ref{cor:canc-overorder}.
  
  If $\calO'$ is an order, and $O \subseteq \calO'$ is a suborder whose radical idealizer at $\fp$ is $\calO'$, then $\fp \calO' \subseteq O$.
  Hence, to compute candidate orders $\calO$, we simply check the preimages of all the subrings of the (finite) $R/\fp$-algebra $\calO'/\fp \calO'$.
  We use isomorphism testing between those orders to enumerate the orders up to isomorphism.
\end{remark}

We have implemented this algorithm in Magma \cite{Magma}.
Our code is available on the web \cite{Smertnig-Voight19:github}.
Running this classification gives all definite quaternion orders over a ring of algebraic integers that are Hermite rings.
From this list it is easy to filter the ones having locally free cancellation.

\begin{remark} \label{rmk:wechecked}
We checked our results against existing lists.
\begin{itemize}
 \item The hereditary definite quaternion orders that are Hermite rings have been classified by Vign\'eras \cite{Vigneras76}, Hallouin--Maire \cite{HallouinMaire06}, and Smertnig \cite{Smertnig15}.
  Up to $R$-algebra isomorphism there are 168 such orders, of which 149 have locally free cancellation.
  Our list is consistent with the (corrected) old classification.

\item  Kirschmer--Lorch \cite{KirschmerLorch16} computed all definite quaternion orders with type number at most $2$, and made them available in electronic form \cite{KirschmerLorch16b}.
  This includes all orders $\calO$ with $\card{\Cls\calO} = 1$, and such an order trivially has locally free cancellation.
  Our list is consistent with theirs.

\item  Estes--Nipp \cite[Table I]{EstesNipp89} list the 40 definite Hermite quaternion $\Z$-orders.
  We found the same number of $\Z$-orders, and we matched their discriminants to the ones appearing in our list.
  \end{itemize}
\end{remark}
\begin{corollary}
  Up to $R$-algebra isomorphism, there are $375$ definite Hermite quaternion orders; of these, $316$ have locally free cancellation.
\end{corollary}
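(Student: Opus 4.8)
The plan is to execute Algorithm~\ref{alg:theoneandonly} and tabulate its output. The correctness of that algorithm, already established above, rests on the degree bound $n=[F:\QQ]\le 9$ and the root-discriminant bounds of Proposition~\ref{prop:df-bound}, the ramification bound of Proposition~\ref{prop:disc-bound}, and the suborder-index bound of Lemma~\ref{lem:sffindexbound}, combined with the complete tabulations of totally real fields \cite{Voight08,Voight08b} and of maximal orders in definite quaternion algebras \cite{KirschmerVoight1,KirschmerVoight2}. Consequently the algorithm returns a provably complete list of definite Hermite quaternion orders, each presented concretely by a $\ZZ$-basis inside an explicit quaternion algebra, together with auxiliary data: its base field $F$, its reduced discriminant $\discrd(\calO)$, its unit index $[\calO^\times:R^\times]$, the mass $\mass(\Cls\calO)$ from Theorem~\ref{thm:massformula}, and the stable class group $\Cl_{G(\calO)} R$.

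First I would run this enumeration in \textsc{Magma} \cite{Magma,Smertnig-Voight19:github}, obtaining the raw list of Hermite orders. Next I would partition the list into $R$-algebra isomorphism classes: two orders lying over the same ring $R$ can admit an $R$-algebra isomorphism only if they sit over the same field $F$ and have equal reduced discriminant, so only finitely many pairs need be compared, and for each such pair one applies the isomorphism test for quaternion orders. Counting the classes yields $375$. Then, among these, I would identify the orders with locally free cancellation via the numerical criterion $\card{\Cls\calO}=\card{\Cl_{G(\calO)} R}$ of Proposition~\ref{prop:lfcanc} --- equivalently, by Proposition~\ref{prop:canc-sff}, the condition that every order locally isomorphic to $\calO$ is Hermite, which can be read off the list since it is closed under passage to locally isomorphic orders. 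This gives $316$. Finally I would cross-check against Theorem~\ref{thm:mainthm} (the $375$ classes collapse to the $303$ ring-isomorphism classes once Galois conjugates over the $36$ fields that occur are identified, and $316$ to $247$) and against the prior classifications of hereditary orders, of orders of type number $\le 2$, and of $\ZZ$-orders, as recorded in Remark~\ref{rmk:wechecked}.

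The main obstacle here is not mathematical but one of computational verification: the proof is only as sound as the \textsc{Magma} implementation \cite{Smertnig-Voight19:github}, so one must be confident that the enumeration of eligible fields and ramification sets is exhaustive, that the recursive suborder search respects the index bounds of Lemma~\ref{lem:sffindexbound} without omission, and that the isomorphism testing between orders is both correct and applied to all relevant pairs. The internal consistency --- the mass identity of Corollary~\ref{cor:equamassHermite} holding for every order on the list --- together with the agreement with the independently obtained lists of Vign\'eras \cite{Vigneras76}, Hallouin--Maire \cite{HallouinMaire06}, Smertnig \cite{Smertnig15}, Kirschmer--Lorch \cite{KirschmerLorch16}, and Estes--Nipp \cite{EstesNipp89} are precisely what mitigate this risk.
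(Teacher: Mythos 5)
Your proposal matches the paper's own argument: the corollary is obtained exactly by running Algorithm~\ref{alg:theoneandonly} (whose correctness rests on Propositions~\ref{prop:df-bound} and \ref{prop:disc-bound} and Lemma~\ref{lem:sffindexbound}), counting the resulting Hermite orders up to $R$-algebra isomorphism, and filtering for locally free cancellation via Proposition~\ref{prop:lfcanc}/\ref{prop:canc-sff}, with the consistency checks of Remark~\ref{rmk:wechecked}. This is essentially the same approach as the paper, so no further comparison is needed.
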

Reducing this list to orders up to ring isomorphism by Galois automorphisms, we obtain Theorem~\ref{thm:mainthm}.
Invariants describing these orders are given in Appendix \ref{appendix:tables}.
A computer-readable file containing all the orders is available on the web \cite{Smertnig-Voight19:github}.

Looking over the list in Appendix \ref{appendix:tables}, we find the following corollary.

 \begin{corollary}
   If a definite quaternion order $\calO$ has locally free cancellation, then the base ring $R$ is factorial.
 \end{corollary}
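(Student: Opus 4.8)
The plan is to read this off from the classification. First I would reduce to the case that $\calO$ is maximal: by Corollary~\ref{cor:canc-overorder}, if $\calO$ has locally free cancellation then so does any maximal order $\calO'\supseteq\calO$, and $\calO'$ has the same base ring $R$; moreover for $\calO'$ maximal we have $\Cl_{G(\calO')}R=\Cl^+ R$, the narrow class group, and locally free cancellation is equivalent to $\#\Cls\calO'=\#\Cl^+ R$ by Proposition~\ref{prop:lfcanc}. So it suffices to show that every base field $F$ arising in Theorem~\ref{thm:mainthm} has $h(R)=\#\Cl R=1$, and I would verify this by inspecting Appendix~\ref{appendix:tables} (equivalently the machine-readable data \cite{Smertnig-Voight19:github}): the orders with locally free cancellation occur over an explicit finite list of totally real fields, and each has trivial class group. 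This is a finite, mechanical check, and no further argument is needed.

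It is worth recording how much is forced before any case analysis. Combining $\mass(\Cls\calO')\le\#\Cl^+ R$ (Corollary~\ref{cor:equamassHermite}, since locally free cancellation implies Hermite) with $\#\Cl^+ R\le 2^{n-1}h(R)$ (Proposition~\ref{prop:clgrowth}, using $-1\notin F^\times_{>0}$) and the mass formula (Theorem~\ref{thm:massformula}, with the trivial estimates $\zeta_F(2)\ge 1$ and $\Nm(\fN)\prod_{\fp\mid\fN}\lambda(\calO,\fp)\ge 1$) yields $d_F^{1/n}\le 2^{2-4/(3n)}\pi^{4/3}$, which is exactly the bound \eqref{eq:df2}. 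In particular the Hilbert class field and Odlyzko argument inside the proof of Proposition~\ref{prop:df-bound} already shows $\Cl R$ is trivial whenever $n\ge 8$, so only the low-degree fields actually require the table lookup.

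I expect the genuine obstacle to be that there does not seem to be an \emph{a priori} proof: the corollary really depends on having the classification in hand. The naive approach --- starting from a nonprincipal ideal $\fraka\subseteq R$ and trying to build either a stably free but non-free locally free right $\calO$-module, or two stably isomorphic, non-isomorphic locally principal right $\calO$-ideals --- does not obviously work, because extension of scalars by $\fraka$ can leave the $\calO$-module isomorphism class unchanged, and I do not know a structural substitute. Given this, the classification-based verification is the proof I would present.
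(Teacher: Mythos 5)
Your proposal is correct and is essentially the paper's own proof: the corollary is obtained by inspecting the classification (the tables in Appendix \ref{appendix:tables}, equivalently the data in \cite{Smertnig-Voight19:github}) and observing that every order with locally free cancellation lives over a field with trivial class group, your reduction to maximal orders via Corollary \ref{cor:canc-overorder} being a harmless extra step. One small wording caution: it does \emph{not} suffice that every field in Theorem \ref{thm:mainthm} have $h(R)=1$ --- the Hermite-only orders over $\ZZ[\sqrt{15}]$ have $h(R)=2$ --- but the check you actually describe (restricting to the orders with cancellation) is exactly the right one.
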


 Note that the analogous statement fails to hold for Hermite orders, with the sole exceptions being two orders over $R=\ZZ[\sqrt{15}]$.

\section{Examples and applications} \label{sec:examappli}

\subsection*{Examples}

To give a flavor of the data, we pick out two interesting examples of definite Hermite quaternion orders.
We recall the fundamental fact that there is a bijection between (twisted) similarity classes of ternary quadratic forms and quaternion orders \cite[Chapter 22]{Voight18}: this bijection has a particularly rich history, see Voight \cite[Remark 22.6.20]{Voight18} for a complete bibliography.  

\begin{example}
First, let $B$ be the usual Hamiltonian quaternion algebra with $i^2=j^2=-1$ and $ij=-ji$.  Then $\calO = \Z + 2\Z i + 2\Z j + 2\Z ij$ is an order of reduced discriminant $32$ that has locally free cancellation and is non-Gorenstein. 
It has $\card{\Cls O}=\card{\StCl O} = 2$ but type number $1$.  Under the bijection between isomorphism classes of quaternion orders and similarity classes of  ternary quadratic forms, $\calO$ corresponds to $2(x^2 + y^2 + z^2)$.
Its Gorenstein saturation, corresponding to $x^2 + y^2 + z^2$, is the Lipschitz order $\Z+\Z i + \Z j + \Z ij$.
\end{example}

There is also an interesting connection with quaternary quadratic forms, discussed in more detail in the next subsection.  We restrict to the case $R=\Z$ for illustration.  Let $\calO$ be a definite quaternion $\Z$-order. Then reduced norm $\nrd\!|_\calO \colon \calO \to \Z$ is a positive definite integral quaternary quadratic form.  Let $\Gen(\nrd\!|_\calO)$ be its genus and $\Cls(\nrd\!|_\calO)$ the set of isometry classes in the genus; and let $\SpGen(\nrd\!|_\calO)$ its spinor genus and $\SpCls(\nrd\!|_\calO)$ the set of isometry classes in the spinor genus.  We have $\SpCls(\nrd\!|_\calO) \subseteq \Cls(\nrd\!|_\calO)$.  Then $\calO$ is Hermite if and only $\#\SpCls(\nrd\!|_\calO)=1$, that is, its spinor genus consists of a single isometry class \cite[Theorem~1]{EstesNipp89}, and we say $\nrd\!|_\calO$ has \defi{spinor class number} $1$.  By Proposition \ref{prop:canc-sff}, it follows that $\calO$ has locally free cancellation if for every order $\calO'$ locally isomorphic to $\calO$ we have $\#\SpCls(\nrd\!|_\calO)=1$.  Parks \cite{Parks74} classified all 40 definite quaternion $\Z$-orders with spinor class number $1$, extending the list of 39 orders with $\#\Cls(\nrd\!|_\calO)=1$ (that is, class number $1$) previously determined by Pall \cite{Pall46}, with the sole outlier described as follows.

\begin{example}
Among the 40 definite Hermite $\Z$-orders, there is exactly one order $\calO$ not having locally free cancellation.
It is the unique definite order with $\#\SpCls(\nrd\!|_\calO)=1$ but $\#\Cls(\nrd\!|_\calO) \neq 1$ found by Parks.
The order $\calO$ is a Bass order of reduced discriminant $27$ in the definite quaternion algebra of discriminant $3$.
With $i^2=-3$, $j^2=-1$, and $ij=-ji$, it can be represented by $\calO = \Z + \Z(\frac{1}{2} + \frac{3}{2}i) +  \Z(3j) + \Z(\frac{3}{2}j + \frac{1}{2}k)$.
Now $\card{\Cls O} = 4$ while $\card{\StCl O}=2$.
Under the surjective map $\st\colon \Cls O \to \StCl O$, one class maps to the trivial class, while the other three classes map to the nontrivial one.
This implies that $\calO$ is Hermite; see Proposition~\ref{prop:sff}.
The order $\calO$ has type number $2$, and therefore there exists a nonisomorphic but locally isomorphic order $\calO'$ that is not Hermite. Accordingly, in $\st\colon \Cls \calO' \to \StCl \calO'$, three classes map to the trivial class, while only one maps to the nontrivial one.

The order $\calO$ corresponds to the ternary quadratic form
\[
  x^2 + 3y^2 + 3z^2 - 3yz;
\]
On the other hand, the quaternary quadratic form $\nrd\!|_\calO$ has discriminant $729=27^2$ and in the basis above is given by
\begin{equation} \label{eqn:Qparks}
Q(x,y,z,w)=x^2 + 7y^2 + 9z^2 + 3w^2 + xy + 9zw.
\end{equation}
We have $\SpCls(Q)=1$ but $\Cls(Q) \neq 1$, indeed $\Cls(Q)=3$ and $\Gen(Q)$ splits into two spinor genera.  In fact, the form $Q$ represents the \emph{unique} class of primitive quaternary quadratic form with $\#\SpCls(Q)=1$ but $\#\Cls(Q) \neq 1$ \cite{EarnestHaensch18}.
\end{example}

More generally, the classification of definite quadratic integral lattices with class number $1$, started by Watson in the 1960s, has recently been finished by Lorch--Kirschmer \cite{LorchKirschmer13}. (The rank $2$ case assumes GRH.)
Earnest--Haensch \cite{EarnestHaensch18} conclude that the lattice found by Parks remains the sole example with spinor class number $1$ but not class number $1$ (by completing the classification for rank $4$).

\subsection*{An application to factorizations}

Aside from the intrinsic importance of the Hermite and cancellation properties for the description of isomorphism classes of locally free modules, the Hermite property has been shown to have important consequences for the factorizations of elements in an order.
This has been observed by Estes--Nipp in \cite{EstesNipp89,Estes91a} in their study of \emph{factorizations induced by norm factorization \textup{(}FNF\textup{)}}, as well as more recently in studying non-unique factorizations in orders by means of arithmetical invariants.

We give a brief glimpse at this connection, by highlighting some properties of the sets of lengths.
Let $\calO^\bullet$ be the multiplicative monoid of non-zero-divisors of $\calO$, that is, the elements of non-zero reduced norm.
An element $u \in \calO^\bullet$ is an \defi{atom} if it cannot be expressed as a product of two non-units.
Every non-unit $a \in \calO^\bullet$ can be written as a product of atoms, but in general not uniquely so.
If $a=u_1\cdots u_k$, with atoms $u_i$, then $k$ is a \defi{length} of $a$, and we write $\sL(a) \subseteq \Z_{\ge 0}$ for the \defi{set of lengths} of $a$.
By considering the reduced norm, it is easily seen that these sets are finite.
If $\card{\sL(a)} \ge 2$, then $\card{\sL(a^k)} \ge k+1$.
Hence, the sets of lengths are either all singletons, in which case $\calO^\bullet$ is \defi{half-factorial}, or become arbitrarily large.
If $\sL(a) = \{ l_1 < \ldots < l_m\}$, let $\Delta(a) = \{ l_2 - l_1, \ldots, l_m - l_{m-1} \}$, and let $\Delta(\calO^\bullet) = \bigcup_{a \in \calO^\bullet} \Delta(a)$ denote the \defi{set of distances} of the monoid $\calO^\bullet$.

If $\calO$ is a Hermite hereditary order, then $\calO^\bullet$ is a \defi{transfer Krull monoid of finite type}.
This follows from \cite[Theorem 4.4]{Smertnig16}, after recognizing the occurring class group as isomorphic to $\StCl \calO$.
For maximal orders, this is more explicit in \cite[Theorem 1.1]{Smertnig13}.
For hereditary orders satisfying the Eichler condition, that is $\dim_F B > 4$ or $B$ is an indefinite quaternion algebra, the result can also be obtained from a theorem of Estes \cite{Estes91b}.

The fact that $\calO$ is a transfer Krull monoid of finite type, implies that many questions on factorizations in $\calO$, in particular all the ones on sets of lengths, can be reduced to questions in combinatorial and additive number theory over finite abelian groups, specifically the stable class group $\StCl \calO$.
See the surveys \cite{Geroldinger16, Schmid16} as a starting point into the extensive literature; and \cite{Tringali17, GeroldingerZhong18, Zhong18, GeroldingerSchmid19} for recent progress.
In particular, the set of distances $\Delta(O^\bullet)$ is finite, indeed $\Delta(O^\bullet) = \{1, \ldots, D\}$ for some $D \in \Z_{\ge 0}$.
The sets $\sL(a)$ satisfy the \defi{Structure Theorem for Sets of Lengths} \cite[Theorem 2.6 and Definition 2.5]{Geroldinger16}, roughly saying that each $\sL(a)$ is a finite union of (finite) arithmetical progressions with distances $d \in \Delta(\calO^\bullet)$, and possibly some gaps at the beginning and the end.

Let us now restrict to $\calO$ a maximal order.
Then, if $\calO$ is \emph{not} Hermite, it is known that $\calO$ is \emph{not} a transfer Krull monoid \cite[Theorem 1.2]{Smertnig13}.
Not just does this structure break down, the factorization properties of $\calO$ are totally different.
For instance, although $\Delta(\calO^\bullet)$ was finite before, now $\Delta(\calO^\bullet) = \Z_{\ge 1}$.
Indeed, for any $l \ge 0$, there exist elements $a \in \calO^\bullet$ with $\sL(a) = \{3\} \cup l+E$, where $E$ is a non-empty subset of $\{2,3,4\}$ \cite[Proposition 7.2]{Smertnig13}.
Thus, the Hermite property provides a sharp dividing line between two completely different regimes as far as the factorizations of elements are concerned.

 \appendix
 
 \section{Comparison with the criterion of Vign\'eras} \label{appendix:comparvign}

 To check whether an order is a Hermite ring, we have checked if the equality 
 \[ \mass(\Cls^{[R]} O)=[O^\times:R^\times]^{-1} \] 
 holds.
 This is very close to Vign\'eras's criterion \cite{Vigneras76} but not precisely the same.
 In this appendix, we show how to derive her original criterion from ours.

 \begin{theorem}[{Vign\'eras \cite[Th\'eor\`eme 3]{Vigneras76}}] \label{thm:vigneras-criterion}
  A definite order $\calO \subseteq B$ is a Hermite ring if and only if
  \begin{equation} \label{eq:vigneras}
    2[O^\times : R^\times] = \tau(\widehat \calO^1) [F^\times_{>0} \cap \nrd(\widehat \calO^\times):R^{\times 2}],
  \end{equation}
  where $\tau$ is the Tamagawa measure.
  For a hereditary order $\calO$, writing $\mathfrak N = \mathfrak D \mathfrak M$ with $\mathfrak D = \disc B$, we have
  \[
    \tau(\widehat \calO^1)^{-1} = 2^{-n} \zeta_F(-1) \prod_{\fp \mid \mathfrak D} (1 - \Nm \fp) \prod_{\fp \mid \mathfrak M} (1 + \Nm \fp)
  \]
\end{theorem}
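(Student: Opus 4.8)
The plan is to deduce \eqref{eq:vigneras} from our mass criterion (Corollary~\ref{cor:equamassHermite}), after which the stated value of $\tau(\widehat\calO^1)$ in the hereditary case falls out by specializing the Eichler mass formula.  I would begin by rewriting Corollary~\ref{cor:equamassHermite}, which says that $\calO$ is Hermite if and only if $[\calO^\times:R^\times]\,\mass(\Cls\calO)=\card{\Cl_{G(\calO)}R}$.  Substituting Proposition~\ref{prop:clgrowth}, and using that a totally positive element of $F^\times$ lying in $\nrd(\widehat\calO^\times)\subseteq\widehat R^\times$ already lies in $R^\times$ (so that $F^\times_{>0}\cap\nrd(\widehat\calO^\times)=R^\times\cap F^\times_{S,\calO}$), gives
\[
  \card{\Cl_{G(\calO)}R}=h(R)\,[F^\times_{>0}\cap\nrd(\widehat\calO^\times):R^{\times 2}]\,[\widehat R^\times:\nrd(\widehat\calO^\times)],
\]
so our criterion reads
\[
  2[\calO^\times:R^\times]=[F^\times_{>0}\cap\nrd(\widehat\calO^\times):R^{\times 2}]\cdot\frac{2\,h(R)\,[\widehat R^\times:\nrd(\widehat\calO^\times)]}{\mass(\Cls\calO)}.
\]
This coincides with \eqref{eq:vigneras} precisely when the \emph{unconditional} identity
\begin{equation}\label{eq:tau-identity}
  \tau(\widehat\calO^1)\,\mass(\Cls\calO)=2\,h(R)\,[\widehat R^\times:\nrd(\widehat\calO^\times)]
\end{equation}
holds; since both \eqref{eq:vigneras} and Corollary~\ref{cor:equamassHermite} characterize the Hermite property and the remaining index factors agree, the equivalence of the two criteria reduces to \eqref{eq:tau-identity}.

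To prove \eqref{eq:tau-identity} I would invoke the classical fact that the Tamagawa number of $B^1=\{\alpha\in B:\nrd(\alpha)=1\}$ equals $1$ and unwind it.  Writing $B^1_\infty=\prod_{v\mid\infty}B^1_v$ (compact, as $B$ is definite) and decomposing the adelic group along a set of representatives $x_i$ of the finite class set $B^1\backslash\widehat B^1/\widehat\calO^1$, one obtains
\[
  1=\vol(B^1_\infty)\,\tau(\widehat\calO^1)\sum_i\frac{1}{\card{\calO_i^1}}, \qquad \calO_i=B\cap x_i\widehat\calO x_i^{-1}.
\]
I would then relate $\sum_i 1/\card{\calO_i^1}$ to $\mass(\Cls\calO)$: the exact sequence $1\to\{\pm1\}\to\calO_i^\times/R^\times\to\nrd(\calO_i^\times)/R^{\times 2}\to 1$ induced by the reduced norm accounts for the factor $2$ coming from the center of $B^1$, while the fibration $\nrd\colon\Cls\calO\to\Cl_{G(\calO)}R$ together with Theorem~\ref{thm:mass} and Corollary~\ref{c:stcl} accounts for $h(R)$ and the index $[\widehat R^\times:\nrd(\widehat\calO^\times)]$; the archimedean volume $\vol(B^1_\infty)$ cancels against the normalization built into $\tau(\widehat\calO^1)$.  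Alternatively, one can verify \eqref{eq:tau-identity} by a formal comparison: the mass formula (Theorem~\ref{thm:massformula}) together with the functional equation of $\zeta_F$ at $s=2$, namely $d_F^{3/2}\zeta_F(2)=(-1)^n2^n\pi^{2n}\zeta_F(-1)$, rewrites
\[
  \mass(\Cls\calO)=2\,(-1)^n2^{-n}\,\zeta_F(-1)\,h(R)\prod_{\fp\mid\fN}\Nm(\fp)\,\lambda(\calO,\fp),
\]
which one matches against the standard local Tamagawa volumes assembling $\tau(\widehat\calO^1)$ (Vign\'eras \cite{Vigneras80}, Voight \cite{Voight18}).

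For the hereditary case, every $\calO_\fp$ is Eichler, so $\nrd(\calO_\fp^\times)=R_\fp^\times$ and $[\widehat R^\times:\nrd(\widehat\calO^\times)]=1$; moreover $\lambda(\calO,\fp)=1-\Nm(\fp)^{-1}$ for $\fp\mid\fD$ and $\lambda(\calO,\fp)=1+\Nm(\fp)^{-1}$ for $\fp\mid\fM$, whence $\Nm(\fN)\prod_{\fp\mid\fN}\lambda(\calO,\fp)=\prod_{\fp\mid\fD}(\Nm(\fp)-1)\prod_{\fp\mid\fM}(\Nm(\fp)+1)$.  Since $B$ is ramified at all real places the number of finite ramified primes has the same parity as $n$, so $(-1)^n\prod_{\fp\mid\fD}(\Nm(\fp)-1)=\prod_{\fp\mid\fD}(1-\Nm(\fp))$, and substituting into \eqref{eq:tau-identity} with the rewritten mass formula yields
\[
  \tau(\widehat\calO^1)^{-1}=\frac{\mass(\Cls\calO)}{2\,h(R)}=2^{-n}\,\zeta_F(-1)\prod_{\fp\mid\fD}(1-\Nm(\fp))\prod_{\fp\mid\fM}(1+\Nm(\fp)),
\]
as claimed.

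The main obstacle is \eqref{eq:tau-identity}: reconciling the normalization conventions for the Tamagawa measure---the archimedean volume of $B^1_\infty$, the power of $2$ from the center $\{\pm1\}$ of $B^1$, the factor $d_F^{3/2}$ hidden in the adelic measure, and the discrepancy between the class number of $B^1$ relative to $\widehat\calO^1$ and $\card{\Cls\calO}$---so that every constant lands in the right place.  Everything else is algebraic substitution together with the functional equation of $\zeta_F$.
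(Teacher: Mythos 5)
Your reduction of \eqref{eq:vigneras} to the single unconditional identity $\tau(\widehat\calO^1)\,\mass(\Cls\calO)=2\,h(R)\,[\widehat R^\times:\nrd(\widehat\calO^\times)]$ is correct: after applying Theorem~\ref{thm:mass}(b) and Proposition~\ref{prop:clgrowth} (together with $F^\times_{>0}\cap\nrd(\widehat\calO^\times)=R^\times_{>0}\cap\nrd(\widehat\calO^\times)$), it is equivalent to the identity \eqref{eq:mass-tamagawa} on which the paper's proof rests, and your sign and parity bookkeeping in the hereditary case matches the paper's. The genuine gap is that you never prove this identity. Your route (a) amounts to re-deriving the Eichler--Vign\'eras mass formula from $\tau(B^1)=1$, and the part you explicitly defer---the archimedean normalization, the comparison of the norm-one class set $B^1\backslash\widehat B^1/\widehat\calO^1$ with $\Cls\calO$, and the emergence of the exact factors $2$, $h(R)$, and $[\widehat R^\times:\nrd(\widehat\calO^\times)]$---is the entire content of the statement; as a symptom, the exact sequence you invoke is not correct as written, since the kernel of $\calO_i^\times/R^\times\to\nrd(\calO_i^\times)/R^{\times 2}$ is $\calO_i^1/\{\pm 1\}$, not $\{\pm 1\}$. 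Your route (b) appeals to ``standard local Tamagawa volumes'' assembling $\tau(\widehat\calO^1)$, but for an arbitrary (possibly non-Gorenstein) order no off-the-shelf local formula matching $\Nm(\fN)\prod_{\fp\mid\fN}\lambda(\calO,\fp)$ is available in the cited sources; producing it is essentially what has to be proved, so as stated this route is close to circular.

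What is missing is a mechanism for handling non-hereditary orders without redoing any measure computation. The paper supplies one: it verifies \eqref{eq:mass-tamagawa} for hereditary orders directly, taking the stated hereditary value of $\tau(\widehat\calO'^1)^{-1}$ as known input (this is Vign\'eras's computation, quoted in the theorem rather than derived---note that your plan instead tries to deduce that formula from the key identity, which forces you back into the unfinished volume computation), and then transfers the identity to an arbitrary order $\calO$ inside a hereditary superorder $\calO'$ using the elementary multiplicativity $\tau(\widehat\calO^1)^{-1}=[\widehat\calO'^1:\widehat\calO^1]\,\tau(\widehat\calO'^1)^{-1}$, Theorem~\ref{thm:mass}(c), Proposition~\ref{prop:clgrowth}, and the snake-lemma identity of Lemma~\ref{lem:norm1-index} expressing $[\widehat\calO'^1:\widehat\calO^1]$ in terms of $[\widehat\calO'^\times:\widehat\calO^\times]$ and $[\widehat R^\times:\nrd(\widehat\calO^\times)]$. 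Without this suborder-transfer step, or a completed Tamagawa-measure computation in full generality, your argument does not close.
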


We will need the following small lemma.
\begin{lemma} \label{lem:norm1-index}
  Let $O \subseteq \calO'$ be orders and let $\fp$ be a prime ideal of $R$.
  Then
  \[
    \frac{[O_\fp'^\times:O_\fp^\times]}{[\nrd(O'^\times_\fp):\nrd(O_\fp^\times)]} = [O'^1_\fp : O_\fp^1].
  \]
\end{lemma}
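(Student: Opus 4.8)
The statement is a local index identity: for orders $O \subseteq \calO'$ and a prime $\fp$, the reduced norm map $\nrd \colon \calO_\fp'^\times \to R_\fp^\times$ induces an exact sequence relating the three unit groups in play. The plan is to write down the short exact sequence
\[
  1 \to \calO_\fp^1 \to \calO_\fp^\times \xrightarrow{\nrd} \nrd(\calO_\fp^\times) \to 1,
\]
valid for any $R$-order in $B$ (here $\calO_\fp^1 = \{\alpha \in \calO_\fp^\times : \nrd(\alpha) = 1\}$ is exactly the kernel), and similarly for $\calO'$. Since $\calO_\fp \subseteq \calO_\fp'$, these two sequences fit into a commutative ladder with vertical inclusions, and the claim is then a purely group-theoretic consequence of the snake lemma (or an elementary counting argument, since all the relevant quotients are finite).

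First I would note that $[\calO_\fp'^\times : \calO_\fp^\times]$, $[\nrd(\calO_\fp'^\times) : \nrd(\calO_\fp^\times)]$, and $[\calO_\fp'^1 : \calO_\fp^1]$ are all finite: the first because $\calO_\fp$ and $\calO_\fp'$ are both full $R_\fp$-lattices in $B_\fp$, and the latter two follow since they are subquotients bounded by the first. Then I would invoke Lemma~\ref{lem:hom-seq} (or the snake lemma directly) applied to the pair of inclusion and reduced norm maps, which gives an exact sequence
\[
  1 \to \calO_\fp^1 \to \calO_\fp'^1 \to \calO_\fp'^\times/\calO_\fp^\times \to \nrd(\calO_\fp'^\times)/\nrd(\calO_\fp^\times) \to 1,
\]
where the middle-left map uses that $\calO_\fp^\times \cap \calO_\fp'^1 = \calO_\fp^1$ and the right exactness uses surjectivity of $\nrd$ onto each $\nrd(\calO_\fp^\times)$. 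Taking orders (cardinalities) in this four-term exact sequence of finite groups yields
\[
  [\calO_\fp'^1 : \calO_\fp^1] \cdot [\nrd(\calO_\fp'^\times):\nrd(\calO_\fp^\times)] = [\calO_\fp'^\times : \calO_\fp^\times],
\]
which is exactly the asserted identity after rearranging.

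I do not expect any serious obstacle here; the only point requiring a moment's care is the identification of $\calO_\fp'^\times \cap \calO_\fp'^1$ with $\calO_\fp^1$ (so that the connecting map in the snake lemma lands correctly) and the verification that $\nrd$ restricted to $\calO_\fp^\times$ genuinely surjects onto the group $\nrd(\calO_\fp^\times)$ — both are immediate from definitions. An alternative, equally short, writes the identity as $[\calO_\fp'^\times:\calO_\fp^\times] = [\calO_\fp'^\times:\calO_\fp^\times \calO_\fp'^1] \cdot [\calO_\fp^\times \calO_\fp'^1 : \calO_\fp^\times]$ and identifies the first factor with $[\nrd(\calO_\fp'^\times):\nrd(\calO_\fp^\times)]$ via $\nrd$ and the second with $[\calO_\fp'^1 : \calO_\fp^1]$ via the second isomorphism theorem; I would use whichever phrasing is cleanest in context.
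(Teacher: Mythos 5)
Your proposal is correct and follows essentially the same route as the paper: the same ladder of short exact sequences $1 \to \calO_\fp^1 \to \calO_\fp^\times \xrightarrow{\nrd} \nrd(\calO_\fp^\times) \to 1$ for $\calO \subseteq \calO'$ with vertical inclusions, and the snake lemma yielding the exact sequence of quotients from which the index identity is read off. The only nitpick is your phrase ``exact sequence of finite groups'': the norm-one groups $\calO_\fp^1 \subseteq \calO_\fp'^1$ are infinite (compact), but since the counting only uses the finiteness of the three indices, the argument is unaffected.
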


\begin{proof}
  Consider the diagram
  \[
    \begin{tikzcd}
      1 \ar[r] & O_\fp^1 \ar[r] \ar[d] & O_\fp^\times \ar[r, "\nrd"] \ar[d] & \nrd(O_\fp^\times) \ar[r] \ar[d] & 1 \\
      1 \ar[r] & \calO'^1_\fp \ar[r] & \calO'^\times_\fp \ar[r, "\nrd"] & \nrd(O'^\times_\fp) \ar[r] & 1.
    \end{tikzcd}
  \]
  By the snake lemma, there is a short exact sequence of the cokernels
  \[
    \begin{tikzcd}
      1 \ar[r] & \calO'^1_\fp / O_\fp^1 \ar[r] & \calO'^\times_\fp /O^\times_\fp \ar[r] & \nrd(O'^\times_\fp) / \nrd(O_\fp^\times) \ar[r] & 1
    \end{tikzcd}
  \]
  which gives the result.
\end{proof}

\begin{proof}[Proof of Theorem~\textup{\ref{thm:vigneras-criterion}}]
  By Proposition \ref{prop:sff}, $\calO$ is a Hermite ring if and only if $\mass(\Cls^{[R]} O) = [O^\times:R^\times]^{-1}$.  Therefore, we need to show
  \begin{equation} \label{eq:mass-tamagawa}
    \mass(\Cls^{[R]} O) = 2\tau(\widehat \calO^1)^{-1} [F^\times_{>0} \cap \nrd(\widehat \calO^\times) : R^{\times 2}]^{-1}.
  \end{equation}

  We first consider the case where $\calO$ is hereditary.
  From the mass formula (Theorem \ref{thm:massformula}), and Proposition~\ref{prop:clgrowth},
  \[
    \mass(\Cls^{[R]} O)
    = \frac{\mass(\Cls\calO)}{\card{\Cl_{G(\calO)} R}}
    =\frac
    {\abs{\zeta_F(-1)} \Nm(\mathfrak N) \prod_{\fp \mid \mathfrak N} \lambda(O,\fp)}
    {2^{n-1} [R^\times_{>0} \cap \nrd(\widehat \calO^\times) : R^{\times 2}] [\widehat R^\times:\nrd(\widehat \calO^\times)]}.
  \]
  Since $\calO$ is hereditary,
  \[
    \Nm(\mathfrak N) \prod_{\fp \mid \mathfrak N} \lambda(O,\mathfrak p) = \prod_{\fp \mid \mathfrak D} (\Nm \fp - 1) \prod_{\fp \mid \mathfrak M} (\Nm \fp + 1) = (-1)^n \prod_{\fp \mid \mathfrak D} (1 - \Nm \fp) \prod_{\fp \mid \mathfrak M} (1 + \Nm \fp).
  \]
  Here, the sign can be expressed as $(-1)^n$, because $B$ is ramified at an even number of places, since $B$ is  definite it is ramified at all archimedean places, and $\fD$ is the product of all non-archimedean places at which $B$ is ramified.
  Moreover, we have $\zeta_F(-1) = \abs{\zeta_F(-1)} (-1)^n$, we have $F^\times_{>0} \cap \nrd(\widehat \calO^\times) = R^\times_{>0} \cap  \nrd(\widehat \calO^\times)$, and finally $\nrd(O^\times_\fp) = R^\times_\fp$ for all prime ideals $\fp$ (since $\calO$ is an Eichler order).
  Comparing with the expression for $\tau(\widehat \calO^1)^{-1}$, we observe that \eqref{eq:mass-tamagawa} holds.

  Now suppose that $\calO$ is not hereditary, and let $\calO'$ be a hereditary order containing $\calO$.
  Then $\mass(\Cls^{[R]} \calO') = 2 \tau(\widehat \calO'^1)^{-1}[R^\times_{>0} \colon R^{\times 2}]^{-1}$ by what we already showed.
  Now
  \[
    \tau(\widehat \calO^1)^{-1} = [\widehat \calO'^1: \widehat \calO^1] \tau(\widehat \calO'^1)^{-1}.
  \]
  On the other hand, 
  \[
    \mass(\Cls^{[R]} O) = \frac{\card{\Cl^+ R}}{\card{\Cl_{G(\calO)} R}} [\widehat \calO'^\times: \widehat \calO^\times] \mass(\Cls^{[R]} \calO'),
  \]
  by Theorem~\ref{thm:mass}.
  Now,
  \[
    \frac{\card{\Cl_{G(\calO)} R}}{\card{\Cl^+ R}} = \frac{[R^\times_{>0} \cap \nrd(\widehat \calO^\times) : R^{\times 2}]}{[R^\times_{>0}:R^{\times 2}]} [\widehat R^\times :\nrd(\widehat \calO^\times)]
  \]
  by Proposition~\ref{prop:clgrowth}.
  By Lemma~\ref{lem:norm1-index},
  \[
    [\widehat \calO'^1: \widehat \calO^1]  = \frac{[\widehat \calO'^\times:\widehat \calO^\times] }{[\widehat R^\times :\nrd(\widehat \calO^\times)]}.
  \]
  Putting everything together, the claim follows.
\end{proof}

\section{Tables} \label{appendix:tables}

The following tables list invariants describing all definite quaternion orders $\calO$ that are Hermite rings, hence all orders with locally free cancellation.
For simplicity, orders are listed up to ring isomorphism not up to $R$-algebra isomorphism.
That is, we use automorphisms of the base field to identify some of the orders.
The corresponding multiplicity is listed in the last column of the tables.
Thus, there are 303 entries in the tables, but 375 orders up to $R$-algebra automorphism.

For example, in $R=\Z[(1+\sqrt{5})/2]$ the prime number $11$ splits, so that $11 R = \fp \fq$.
Let $B$ be the quaternion algebra over $F=\Q[\sqrt{5}]$ that is ramified only at the archimedean primes (corresponding to $n=2$, $d=5$, $D=1$ in the table below).
In $B$ there exist two hereditary orders $O$, $O'$ with $N\coloneqq\Nm(\discrd(O))=\Nm(\discrd(O'))=11$ having cancellation.
One of these is maximal at $\fp$ and non-maximal at $\fq$, while the other one is non-maximal at $\fp$ and maximal at $\fq$.
However, the Galois automorphism of $F$ maps $\fp$ to $\fq$, and, extending it to a ring automorphism of $B$, it maps $O$ to $O'$.
Thus, $O$ and $O'$ are isomorphic as rings but not as $R$-algebras.
We record only one entry in the table (the line with $n=2$, $d=5$, $D=1$, $N=11$), but note the multiplicity $2$ in the column labeled by `\#'.

In the tables, we use the following notation.
\begin{itemize}
\item $n=[F:\Q]$ is the degree of the base field $F$.
\item $d$ is the discriminant of $F$.
\item $D=\Nm_{F|\Q}(\disc B)$ is the norm of the discriminant of the quaternion algebra $B$ over $F$.
\item $N=\Nm_{F|\Q}(\discrd \calO)$ is the norm of the reduced discriminant of the order $\calO \subseteq B$.
\end{itemize}
An empty entry in one of these columns means that the corresponding structure (quaternion algebra, respectively, the base field), is the same as in the previous line.
For the number fields appearing in the tables, the pair $(n,d)$ uniquely characterizes them up to field isomorphism.

For a prime $\fp$ of $R$, let $k \colonequals R/\frakp$ be the residue class field of $\fp$.
Recall that the \defi{Eichler symbol} of a quaternion $R$-order $\calO$ is defined as
\[
  (O\,|\,\fp) = \left( \frac{O}{\fp} \right) =
  \begin{cases}
    *, & \text{if $O_\fp/\rad O_\fp \simeq \M_2(k)$;} \\
    1, & \text{if $O_\fp/\rad O_\fp \simeq k \times k$;}\\
    0, & \text{if $O_\fp/\rad O_\fp \simeq k$; and}\\
    -1, &\text{if $O_\fp/\rad O_\fp$ is a (separable) quadratic field extension of $k$.}
  \end{cases}
\]
By \defi{residually inert}, respectively, \defi{residually quadratic}, we mean an order for which $(O\,|\,\fp) \in \{*,-1\}$, respectively, $(O\,|\,\fp) \in \{*, \pm 1\}$ for all prime ideals $\fp$ of $R$.

The next column lists the strongest property that the order possesses (globally), among the following:
{\scriptsize
  \[
    \begin{tikzcd}[every arrow/.append style=Rightarrow, column sep=10pt]
      & \text{hereditary} \ar[rr] & & \text{Eichler} \ar[rd] &&  &\\
      \text{maximal} \ar[ru] \ar[rrd] & & & & \text{residually quadratic} \ar[r] & \text{Bass} \ar[r] & \text{Gorenstein}. \\
      & & \text{residually inert} \ar[rru] & & & &
    \end{tikzcd}
  \]
}

The column labeled `c' contains an entry `c' whenever the order listed has locally free cancellation, and is empty otherwise (that is, the order is a Hermite order but does not have locally free cancellation).

Next we list local Eichler symbols.
The data are organized by rational primes.
For each prime $p$, if there is a prime ideal $\fp$ of $R$ over $p$ for which $O_\fp$ is not maximal, we list the local Eichler symbol $(O\,|\,\fp)$ for all $\fp$ over $p$.
Here, the primes lying over $p$ are sorted first in ascending order of $\Nm \fp$, then by ascending ramification index.
For easier readability, if $\calO$ is maximal at every $\fp$ containing $p$, we do not list any data.
This means $(O\,|\,\fp) = *$ if the quaternion algebra is unramified at $\fp$, and $(O\,|\,\fp)=-1$ if the algebra is ramified at $\fp$.

Next, we list the cardinality of the class set of locally free right ideals of $\calO$, of the stable class group of $\calO$ (which is isomorphic to $\Cl_{G(\calO)} R$), and the class group of $R$ itself.
If the value is $1$, we omit the entry for easier readability.
Next, we list the cardinality of the class set of locally free right ideals of $\calO$, of the stable class group of $\calO$ (which is isomorphic to $\Cl_{G(\calO)} R$), the type number $t(O)$, and the class group of $R$ itself.
If the value is $1$, we omit the entry for easier readability.

The type number is the number of isomorphism classes of orders that are locally isomorphic to $O$; again we suppress the value $1$.
Observe that if $t(O) > 1$ and $O$ has cancellation, then the orders that are locally isomorphic but not isomorphic to $O$ also have cancellation, and therefore also appear in the tables.
Several instances of this can be seen over in the algebra with $N=1$ over the field $\Q[\sqrt{12}]$.

The final column lists the multiplicity with which the entry should be counted, to count the orders up to $R$-algebra isomorphism. Again, if the value is $1$, we omit the entry.

The tables are intended to give an overview over the orders, not to characterize them up to isomorphism.
A computer-readable list of all the orders in the table, including generators, is available electronically at \cite{Smertnig-Voight19:github}.

\rowcolors{2}{}{lightgray!25}
\begin{table*}
  \caption{Definite Hermite quaternion orders over $\ZZ$.}

  }
\end{landscape}


\bibliographystyle{hyperalphaabbr}
\bibliography{defcancel}

\end{document}